\titleformat*{\section}{\large\scshape}
\titleformat{\subsection}[runin]{\normalsize\bfseries}{\thesubsection}{5pt}{}[.]
\newlist{myenumi}{enumerate}{1}
\setlist[myenumi,1]{label=\upshape(\roman*)}
\newlist{myenuma}{enumerate}{1}
\setlist[myenuma,1]{label=\upshape(\alph*)}
\declaretheorem[name=Theorem, numberwithin=section]{thm}
\declaretheorem[name=Theorem, numbered=no]{thm*}
\declaretheorem[name=Lemma,numberlike=thm]{lem}
\declaretheorem[name=Lemma,numbered=no]{lem*}
\declaretheorem[name=Proposition,numberlike=thm]{prop}
\declaretheorem[name=Definition,numberlike=thm, style=definition]{defi}
\declaretheorem[name=Conjecture,numberlike=thm, style=remark]{conj}
\declaretheorem[name=Example, numberlike=thm, style=remark]{ex}
\declaretheorem[name=Theorem]{thmx}
\declaretheorem[name=Corollary, numberlike=thmx]{corx}
\numberwithin{equation}{section}
\crefname{figure}{Figure}{Figures}
\crefname{table}{Table}{Tables}
\crefname{thm}{Theorem}{Theorems}
\crefname{thmx}{Theorem}{Theorems}
\crefname{lem}{Lemma}{Lemmas}
\crefname{defi}{Definition}{Definitions}
\crefname{setup}{Setup}{Setups}
\crefname{conj}{Conjecture}{Conjectures}
\crefname{quest}{Question}{Questions}
\crefname{cor}{Corollary}{Corollaries}
\crefname{corx}{Corollary}{Corollaries}
\crefname{prop}{Proposition}{Propositions}
\crefname{ex}{Example}{Examples}
\crefname{rem}{Remark}{Remarks}
\crefname{section}{Section}{Sections}
\crefname{subsection}{Subsection}{Subsections}
\crefname{chapter}{Chapter}{Chapters}
\crefname{appendix}{Appendix}{Appendices}
\title{The positive mass theorem and distance estimates\\in the spin setting}
\author{Simone Cecchini\thanks{Funded by the Deutsche Forschungsgemeinschaft (DFG, German Research Foundation) through the Priority Programme ``Geometry at Infinity'' (SPP 2026, CE~393/1-1).}}
\affil{Mathematical Institute\\University of Göttingen, Germany\\\vspace{0.2cm}
email:~\href{mailto:simone.cecchini@mathematik.uni-goettingen.de}{simone.cecchini@mathematik.uni-goettingen.de}\\ url:~\href{https://simonececchini.org}{simonececchini.org}}
\author{Rudolf Zeidler\thanks{Funded by the Deutsche Forschungsgemeinschaft (DFG, German Research Foundation) – Project-ID 427320536 – SFB 1442, as well as under Germany’s Excellence Strategy EXC 2044  390685587, Mathematics Münster: Dynamics–Geometry–Structure, and through the Priority Programme ``Geometry at Infinity'' (SPP 2026, ZE~1123/2-2).}}
\affil{Mathematical Institute\\University of Münster, Germany\\\vspace{0.2cm}
email:~\href{mailto:math@rzeidler.eu}{math@rzeidler.eu}\\ url:~\href{https://www.rzeidler.eu}{www.rzeidler.eu}}
\date{}
\begin{document}

\maketitle

\begin{abstract} 
Let $\mathcal{E}$ be an asymptotically Euclidean end in an otherwise arbitrary complete and connected Riemannian spin manifold $(M,g)$.
We show that if $\mathcal{E}$ has negative ADM-mass, then there exists a constant $R > 0$, depending only on $\mathcal{E}$, such that $M$ must become incomplete or have a point of negative scalar curvature in the $R$-neighborhood around $\mathcal{E}$ in $M$.
This gives a quantitative answer to Schoen and Yau's question on the positive mass theorem with arbitrary ends for spin manifolds. Similar results have recently been obtained by Lesourd, Unger and Yau without the spin condition in dimensions $\leq 7$ assuming Schwarzschild asymptotics on the end $\mathcal{E}$.
We also derive explicit quantitative distance estimates in case the scalar curvature is uniformly positive in some region of the chosen end $\mathcal{E}$.
Here we obtain refined constants reminiscent of Gromov's metric inequalities with scalar curvature.
\end{abstract}
\newpage

\section{Introduction and main results}
The positive mass theorem of Schoen and Yau is a central result and an exceedingly useful tool in the study of the geometry of scalar curvature, mathematical relativity and geometric analysis.
The protagonist in the Riemannian version of the positive mass theorem is the \emph{ADM-mass}, originally attributed to \citeauthor{ADM}~\cite{ADM}, a physically interesting geometric invariant \(\mass(\mathcal{E},g) \in \R\) associated to each end \(\mathcal{E} \subseteq M\) in an \emph{asymptotically Euclidean} manifold \((M,g)\).
Note we use the terminology \enquote{asymptotically Euclidean} instead of the commonplace \enquote{asymptotically flat} because we feel the former is slightly less ambiguous, but these terms should be understood as synonyms for the purposes of this paper; for details we refer to \cref{SS:AsymptoticallyFlat}.
\begin{thm}[Riemannian Positive Mass Theorem]\label{thm:PMT}
Let $(M,g)$ be a complete asymptotically Euclidean manifold \parensup{without boundary} of dimension \(n \geq 3\) which has non-negative scalar curvature.
Then the ADM-mass of each end of $(M,g)$ is non-negative.
Furthermore, if \(M\) has an end of zero mass, then $(M,g)$ is isometric to Euclidean space.
\end{thm}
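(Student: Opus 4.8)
Since the statement assumes no spin condition, the plan is to follow the minimal-hypersurface method of Schoen and Yau rather than Witten's spinorial argument. I would begin with two normalization steps. First, a density/approximation argument: by a conformal deformation supported near infinity one may assume that $(M,g)$ is \emph{harmonically flat} on the chosen end $\mathcal{E}$, meaning $g = u^{4/(n-2)}\delta$ there with $u$ a positive harmonic function, $u \to 1$ at infinity, at the cost of changing $\mass(\mathcal{E},g)$ by at most a prescribed $\varepsilon > 0$ and keeping the scalar curvature non-negative; concretely one first conformally deforms $g$ so that its scalar curvature vanishes outside a compact set (solving a linear elliptic equation), then reads the mass off the harmonic expansion $u = 1 + A|x|^{2-n} + O(|x|^{1-n})$, with $\mass(\mathcal{E},g)$ a positive multiple of $A$. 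Second, multiplying $g$ by $w^{4/(n-2)}$ for a suitable positive harmonic $w$ that tends to $1$ on $\mathcal{E}$ and decays like a Green's function on every other end compactifies all remaining ends without creating negative scalar curvature and can only decrease $\mass(\mathcal{E},g)$; so it suffices to prove $\mass(\mathcal{E},g) \ge 0$ when $M$ is connected, has non-negative scalar curvature, and has the single harmonically flat end $\mathcal{E}$.

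Now suppose for contradiction that $\mass(\mathcal{E},g) < 0$, i.e.\ $A < 0$, so $u < 1$ far out; intuitively the weighted area functional then discounts area near infinity, which is exactly what is needed to trap a minimal hypersurface escaping to infinity. Making this precise, in asymptotically Euclidean coordinates $(x^1,\dots,x^n)$ one solves a sequence of Plateau problems for large coordinate balls with boundary data on the hyperplane $\{x^n = 0\}$ and extracts a limit: a complete, non-compact, two-sided \emph{stable} minimal hypersurface $\Sigma \subset M$ which is itself asymptotically Euclidean of dimension $n-1$. For such a $\Sigma$, the stability inequality together with the traced Gauss equation and $\mathrm{scal}_M \ge 0$ gives
\[
\int_\Sigma \Bigl( \lvert \nabla \varphi \rvert^2 + \tfrac12\, \mathrm{scal}_\Sigma\, \varphi^2 \Bigr)\, d\mu \;\ge\; 0 \qquad \text{for all } \varphi \in C^\infty_c(\Sigma),
\]
so the conformal Laplacian of $\Sigma$ is non-negative; hence after a conformal change (tracked through the asymptotics) $\Sigma$ is again a complete asymptotically Euclidean manifold with non-negative scalar curvature, and one checks that it again has negative mass. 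Iterating this descent lowers the dimension by one at each stage. The base case $n = 3$ is the classical contradiction: $\Sigma$ is then a complete stable minimal surface, asymptotic to a plane and of at most quadratic area growth, hence conformally parabolic with finite total curvature, and the stability inequality combined with Gauss--Bonnet and the negative mass is impossible.

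The step I expect to be the main obstacle is the \emph{regularity of area-minimizing hypersurfaces in dimensions $n \ge 8$}, where minimizers can carry a singular set of Hausdorff codimension at least $7$, so the naive induction breaks. To handle all dimensions $n \ge 3$ I would invoke either Schoen--Yau's method of minimal slicings --- replacing smooth hypersurfaces by nested families of weighted-minimal integral currents with controlled singular sets --- or Lohkamp's reduction via compactification of the asymptotically Euclidean end; either device lets the dimensional descent go through, and the remaining ingredients (existence of the escaping minimal hypersurface once the mass is negative, and the curvature- and volume-decay estimates needed to iterate) are by now standard.

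For the rigidity statement, suppose some end has $\mass = 0$. If $(M,g)$ were not Ricci-flat, a suitable compactly supported deformation of $g$ keeps the scalar curvature non-negative while strictly decreasing the mass below zero (e.g.\ deforming in the direction $-\mathrm{Ric}$ after first arranging $\mathrm{scal}_g \equiv 0$ using the vanishing of the mass), contradicting the inequality just proved; hence $(M,g)$ is Ricci-flat. Being asymptotically Euclidean, $(M,g)$ then has maximal asymptotic volume ratio, so the rigidity case of the Bishop--Gromov volume comparison forces $(M,g)$ to be isometric to Euclidean space.
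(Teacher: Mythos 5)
The paper treats this statement purely as background and does not supply its own proof: it cites Schoen--Yau \cite{SY79I,SY79II,Schoen-Yau81:General_Asymptotics} for $n\leq 7$, the recent work of Schoen--Yau \cite{schoen2017positive} and Lohkamp \cite{LoI,LoII} for higher dimensions, and Witten \cite{Witten:positive_mass} for the spin case. Your sketch follows the minimal-hypersurface route of Schoen--Yau, which is genuinely different from the paper's own operating technique (Witten's spinor argument, which underlies its Theorems~A, B, and C). The trade-offs are the usual ones: minimal hypersurfaces avoid the spin hypothesis but force you into the singularity theory of area-minimizers in dimensions $n\ge 8$ (hence the need for singular slicings or Lohkamp's surgeries, as you note); spinors give a clean argument in all dimensions and, as the paper exploits, are robust enough to be perturbed by a Callias potential and applied on noncompact ambient manifolds with arbitrary ends, but of course require the spin structure.

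Your outline is essentially the standard one and, at this level of granularity, correct: harmonic flatness via conformal density, compactification of the remaining ends by a positive harmonic function tending to zero there (which keeps $\mathrm{scal}\ge 0$ and by the maximum principle can only decrease the mass of $\mathcal E$), trapping a complete stable minimal hypersurface when the mass is negative, and dimensional descent through the conformal Laplacian. Two places where you should be explicit that you are leaning on substantial machinery rather than something routine. First, the inductive claim that the conformally normalized slice ``again has negative mass'' is the most delicate part of the higher-dimensional Schoen--Yau argument; the descent of the mass is not automatic and requires a careful comparison between the original mass and boundary integrals at infinity on the slice. Second, your rigidity argument passes through two nontrivial steps --- the Schoen--Yau perturbation producing strictly negative mass from a non-Ricci-flat metric, and the conclusion that a Ricci-flat asymptotically Euclidean manifold with Euclidean (not merely ALE) asymptotics has maximal asymptotic volume ratio and is hence isometric to $\R^n$ by the rigidity case of Bishop--Gromov. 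Both are correct but deserve citation rather than a one-line gloss. The paper's own rigidity argument (for Theorem~B) is simpler once spinors are available: zero mass forces a full basis of parallel spinors, hence a parallel frame and flatness, and then Cartan--Hadamard; you may find it instructive to contrast that with the volume-comparison route you chose.
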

\Cref{thm:PMT} was first proved by Schoen and Yau~\cite{SY79I,SY79II,Schoen-Yau81:General_Asymptotics} for $n\leq 7$ using minimal hypersurfaces, whereas the higher-dimensional cases have only recently been treated in preprints of \citeauthor{schoen2017positive}~\cite{schoen2017positive} and Lohkamp~\cite{LoI,LoII}.
A different method based on spinors was found by Witten~\cite{Witten:positive_mass} which allows to prove \cref{thm:PMT} in all dimensions provided that \(M\) is spin; see also~\cite{Parker-Taubes:Wittens_Proof,Bartnik:MassAsymptoticallyFlat}.

A curious feature of \cref{thm:PMT} is that it consists of separate statements for each asymptotically Euclidean end of \(M\).
Thus one is lead to ask if the positive mass theorem can, in a certain sense, be localized to a single end without imposing the strong topological and metric restrictions of being asymptotically Euclidean on the remaining part of the manifold:
%
\begin{conj}[{Positive Mass Conjecture with Arbitrary Ends~\cite{lesourd2020positive,SY88}}]\label{conj:PMTAE}
Let $(M,g)$ be an arbitrary complete Riemannian manifold of dimension \(n \geq 3\) which has non-negative scalar curvature.
Let $\mathcal E \subseteq M$ be a single asymptotically Euclidean end in $M$.
Then $\mass(\mathcal E,g)\geq0$.
\end{conj}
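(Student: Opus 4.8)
As the whole paper is set in the spin category, I treat the case that \((M,g)\) is spin, where \cref{conj:PMTAE} asserts \(\mass(\mathcal E,g)\geq 0\). It is enough to prove the quantitative dichotomy announced in the abstract — that negative ADM-mass of \(\mathcal E\) forces \(M\) to be incomplete or to contain a point of negative scalar curvature inside a fixed neighbourhood \(\Omega_R\) of \(\mathcal E\), with \(R=R(\mathcal E)\) — because under the hypotheses of \cref{conj:PMTAE} the only remaining possibility is \(\mass(\mathcal E,g)\geq 0\). So suppose \(\mass(\mathcal E,g)=-m_0<0\); I would fix \(R>0\), set \(\Omega_R=\{x\in M:\,d(x,\mathcal E)\leq R\}\), assume for contradiction that \(\Omega_R\) is complete and satisfies \(\operatorname{scal}_g\geq 0\), and derive a contradiction as soon as \(R\) exceeds a threshold depending only on the asymptotic data of \(\mathcal E\) and on \(m_0\).

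The engine is Witten's spinorial proof of \cref{thm:PMT}, run so that the arbitrary region \(M\setminus\Omega_R\) never contributes. The natural device is a Callias-type perturbation \(\mathcal D_\Phi=D_g+\Phi\) of the spin Dirac operator, where \(\Phi\) is an odd, self-adjoint bundle potential that vanishes on \(\mathcal E\) and on the bulk of \(\Omega_R\) — so that there \(\mathcal D_\Phi=D_g\) and the flux computation at the asymptotically Euclidean infinity is untouched — but becomes coercive in a collar near \(\partial\Omega_R\), making \(\mathcal D_\Phi\) Fredholm and concentrating all of the analysis inside \(\Omega_R\) regardless of what lies beyond. One then solves \(\mathcal D_\Phi\psi=0\) with \(\psi\) asymptotic on \(\mathcal E\) to the parallel extension \(\psi_0^\sharp\) of a unit constant spinor \(\psi_0\) and integrates by parts against the Weitzenböck–Lichnerowicz identity \(D_g^{2}=\nabla^{*}\nabla+\tfrac14\operatorname{scal}_g\). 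The flux at infinity contributes the familiar \(c_n\,\mass(\mathcal E,g)\,|\psi_0|^{2}=-c_n m_0|\psi_0|^{2}<0\) (with \(c_n>0\)), while the bulk integral is
\begin{equation*}
\int\Bigl(|\nabla\psi|^{2}+\tfrac14\operatorname{scal}_g\,|\psi|^{2}+|\Phi\psi|^{2}\Bigr)\;+\;\int\langle[D_g,\Phi]\,\psi,\psi\rangle ,
\end{equation*}
whose first part is non-negative on \(\Omega_R\) and whose commutator part is supported only on the collar where \(\Phi\) switches on. Taking that collar of width comparable to \(R\) lets one switch \(\Phi\) on so slowly that \(|[D_g,\Phi]|\lesssim|\nabla\Phi|\) is negligible; if the negative part of \(\int\langle[D_g,\Phi]\psi,\psi\rangle\) can then be bounded by \(\tfrac12 c_n m_0|\psi_0|^{2}\), the displayed identity forces \(-c_n m_0|\psi_0|^{2}\geq-\tfrac12 c_n m_0|\psi_0|^{2}\), which is absurd; the threshold \(R_0\) is then governed only by \(n\), by \(m_0=|\mass(\mathcal E,g)|\), and by the fall-off rate of \(g\) on \(\mathcal E\).

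The main obstacle is exactly this collar estimate, which must be made uniform \emph{without any hypothesis on the volume, curvature or topology of \(M\setminus\Omega_R\)}. Controlling \(\int\langle[D_g,\Phi]\psi,\psi\rangle\) rests on weighted Sobolev and exponential-decay estimates for the Callias solution \(\psi\) as it enters the coercive region — ensuring \(\psi\) is genuinely small where \(\Phi\) varies — together with a sufficiently careful choice of \(\Phi\) and of the auxiliary bundle it acts on, and it must be dovetailed with the weak asymptotics that the ADM-mass of \(\mathcal E\) permits; handling the possible negativity of \(\operatorname{scal}_g\) beyond \(\Omega_R\) is precisely where the coercivity of \(\mathcal D_\Phi\) (or, in the contrapositive form, the ``incomplete'' alternative) does its work. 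An essentially dual route is to first convert the negative mass — by a conformal change supported on \(\mathcal E\) alone — into a region carrying the asymptotic spin topology of \(\mathcal E\) on which the scalar curvature is uniformly positive and whose depth inside \(\Omega_R\) grows with \(R\), and then to apply a Gromov-type width estimate for such regions, itself proved by a Callias operator; the difficulty is of the same nature. Optimising the profile of \(\Phi\) (or of the comparison region) is what should produce the refined, Gromov-flavoured constants announced in the abstract, and with \(R_0\) so obtained one lets \(\Omega_R\) exhaust \(M\) to recover the spin case of \cref{conj:PMTAE}.
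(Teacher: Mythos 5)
Your overall strategy --- run Witten's argument with a Callias-type perturbation $D_g+\Phi$ that vanishes on the end, becomes coercive away from it, and extract a quantitative threshold $R(\mathcal E)$ from which the spin case of \cref{conj:PMTAE} follows by exhaustion --- is indeed the strategy of the paper (\cref{thm:A} implies \cref{thm:B}). But the step you yourself single out as ``the main obstacle'' is a genuine gap, and the way you propose to close it would not work. You place the switch-on collar of $\Phi$ near $\partial\Omega_R$, i.e.\ in the \emph{arbitrary} part of $M$, and hope to bound $\int\langle[D_g,\Phi]\psi,\psi\rangle$ by making the slope of $\Phi$ of size $1/R$ and invoking decay of $\psi$ into the coercive region. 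Neither ingredient is available there: the transition zone is precisely where $\Phi$ is not yet large, so no exponential decay of $\psi$ holds in it, and a bound of the form $\frac{1}{R}\int_{\mathrm{collar}}|\psi|^2$ is useless because the $L^2$-mass of $\psi$ on that collar depends on the unknown geometry (volume, Poincar\'e constant, curvature) of $M\setminus\mathcal E$, which is exactly what must not enter the constant $R(\mathcal E)$. Note also that you cannot avoid a negative term altogether: switching $\Phi$ on from $0$ forces $|\,\mathrm{d}\Phi\,|$ to beat $\Phi^2$ somewhere, so the issue is not to make the commutator ``negligible'' but to decide \emph{where} its negative contribution lives and what absorbs it.

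The paper's resolution is the idea missing from your proposal: the ramp-up of the potential (hence the entire negative part of $\theta_\psi=\frac{\scal_g}{4}+\psi^2-|\mathrm{d}\psi|$) is confined to a collar \emph{inside the asymptotically Euclidean end} $X_0\subset\mathcal E$, where the weighted Poincar\'e inequality (\cref{prop:weighted_poincare}) with a constant depending only on $(X_0,g)$ absorbs it into $\|\nabla u\|_{L^2}^2$; this is \cref{thm:isomorphism}, and it is what makes all constants depend only on the end. Outside the end the potential is continued by the ODE profile $t\mapsto\lambda/(1-\lambda t)$ (\cref{lem:escape_to_infinity}), which satisfies $\psi^2-|\mathrm{d}\psi|\geq 0$ pointwise no matter what the geometry is, and blows up before distance $1/\lambda$, so that at the cut hypersurface $\partial X_\lambda$ it dominates an arbitrarily negative mean curvature; the cut itself is handled by working on a compact-boundary AE manifold with the chiral boundary condition $\chi u=u$ and the Fredholm/isomorphism theory of \cref{prop:CalliasProperties}, rather than on all of $M$ (which may be incomplete beyond $\Omega_R$). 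With these replacements your outline becomes the proof of \cref{thm:A}, and letting $\lambda\searrow 0$ gives \cref{thm:B}; as written, however, the collar estimate at the heart of your argument has no proof and, located where you put it, cannot have one with constants depending only on $\mathcal E$.
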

Beyond being a natural geometric question, the interest in this conjecture goes back to seminal work of \citeauthor{SY88}~\cite{SY88} in the 1980s on conformally flat manifolds, where it is shown that \cref{conj:PMTAE} implies a Liouville conjecture for locally conformally flat manifolds of non-negative scalar curvature; see~\cite[\S4]{SY88} and the discussion in~\cite[\S1]{lesourd2020positive} for details.
However, the last remaining cases of the Liouville theorem have recently been proved through a combination of preprints by \citeauthor{chodosh_li}~\cite{chodosh_li} and \citeauthor{lesourd2020positive}~\cite{lesourd2020positive}, but without addressing \cref{conj:PMTAE}.
Instead, in a separate more recent preprint, \citeauthor{lesourd2021positive}~\cite{lesourd2021positive} proved \cref{conj:PMTAE} for \(n \leq 7\) assuming that the chosen end \(\mathcal{E}\) satisfies a more specific fall-off condition, namely that it is \emph{asymptotic to Schwarzschild} (compare \cref{ex:Schwarzschild}).
In the classical case of \cref{thm:PMT}, more general fall-off conditions can be reduced to Schwarzschild asymptotics via a density theorem, but at the moment this does not appear to be readily available in the setting of arbitrary ends as pointed out in \cite{lesourd2021positive}.

In the spin setting, \cref{conj:PMTAE} has been addressed by \citeauthor{BC05} who showed that Witten's method can be applied directly on non-compact manifolds with arbitrary ends via the methods developed in~\cite{BC05}; more precisely see~\cite[Theorem~11.2]{BC03}.
Moreover, their methods also yield the related rigidity statement for the case of zero mass, that is, $\mass(\mathcal E,g) = 0$ implies that \((M,g)\) is flat and hence isometric to Euclidean space.


The main goal of the present paper is to study \cref{conj:PMTAE} in the spin setting from a quantitative point of view via an augmentation of Witten's method.
Indeed, our first main theorem can be viewed as a quantitative refinement of the statement of \cref{conj:PMTAE} which makes the idea of \enquote{localizing} the positive mass theorem precise.
\begin{thmx}\label{thm:A}
Let \((\mathcal{E},g)\) be an \(n\)-dimensional asymptotically Euclidean end such that $\mass(\mathcal E,g)< 0$.
Then there exists a constant \(R = R(\mathcal{E},g)\) such that the following holds: If \((M,g)\) is an \(n\)-dimensional Riemannian manifold without boundary that contains \((\mathcal{E}, g)\) as an open subset and \(\Nbh = \Nbh_R(\mathcal{E}) \subseteq M\) denotes the open neighborhood of radius \(R\) around \(\mathcal{E}\) in \(M\), then at least one of the following conditions must be violated:
\begin{myenuma}
    \item \(\overline{\Nbh}\) is \parensup{metrically} complete, \label{item:complete}
    \item \(\inf_{x \in \Nbh} \scal_g(x) \geq 0\), \label{item:geq_scal}
    \item \(\Nbh\) is spin. \label{item:spin}
\end{myenuma}
\end{thmx}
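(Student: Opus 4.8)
I would argue by contradiction. Replacing $M$ by the connected component containing $\mathcal{E}$ — which contains $\Nbh = \Nbh_R(\mathcal{E})$ — we may assume $M$ connected. Fix $R > 0$, to be specified at the end as a quantity depending only on $(\mathcal{E},g)$, and suppose \ref{item:complete}, \ref{item:geq_scal} and \ref{item:spin} all hold; I will deduce $\mass(\mathcal{E},g) \geq -\tfrac12\lvert\mass(\mathcal{E},g)\rvert$, contradicting $\mass(\mathcal{E},g) < 0$. If $\Nbh = M$, then $M$ is a connected, complete spin manifold without boundary with $\scal_g \geq 0$ containing the asymptotically Euclidean end $\mathcal{E}$, so $\mass(\mathcal{E},g) \geq 0$ already follows from the spinorial positive mass theorem for manifolds with arbitrary ends, see \cite[Theorem~11.2]{BC03}. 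Hence we may assume $\Nbh \subsetneq M$; since $\Nbh$ is open and $M$ connected, $\partial\Nbh := \overline{\Nbh}\setminus\Nbh \neq \emptyset$, and every point of $\partial\Nbh$ lies at distance exactly $R$ from $\mathcal{E}$. Heuristically, $\partial\Nbh$ now plays the role of a second boundary component in a Gromov-type band, and its distance $R$ from $\mathcal{E}$ is what will be played against the negativity of the mass.

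I would then carry out Witten's proof of \cref{thm:PMT} directly on the incomplete manifold $\Nbh$, compensating for the complete lack of control over $M$ near $\partial\Nbh$ by a Callias-type zeroth order deformation of the Dirac operator. Fix asymptotically Euclidean coordinates on $\mathcal{E}$ outside a compact core, with decay rate $\tau > (n-2)/2$, and let $\psi_0$ be the associated parallel-at-infinity spinor, normalized to $\lvert\psi_0\rvert = 1$; then $\mass(\mathcal{E},g)$ equals, up to a positive dimensional constant $c_n$, the limit as $r \to \infty$ of Witten's boundary integral over the coordinate spheres. Let $D$ be the spin Dirac operator of $\Nbh$, put $d := \operatorname{dist}_M(\,\cdot\,,\mathcal{E}) \in [0,R)$, and let $\Gamma$ be a fibrewise self-adjoint involution anticommuting with $D$ (after a standard doubling when $n$ is odd). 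I would construct a smooth potential $\phi \colon \Nbh \to [0,\infty)$ which is small and integrably decaying on $\mathcal{E}$ (so that $\phi\psi_0$ and $\nabla\phi$ are square-integrable there against $\lvert\psi_0\rvert^2$), which on $M \setminus \mathcal{E}$ depends only on $d$ and satisfies the Riccati equation $\phi' = \phi^2$ near $\partial\Nbh$ (hence blows up exactly at $d = R$), and which on $\mathcal{E}$ satisfies $\lvert\nabla\phi\rvert \leq \phi^2 + \varepsilon(R)\, r^{-2}$ with $\varepsilon(R) \to 0$ as $R \to \infty$. The operator $D_\phi := D + \phi\,\Gamma$ then satisfies the Bochner--Lichnerowicz--Weitzenböck identity
\[
D_\phi^{\,2} \;=\; \nabla^{*}\nabla \;+\; \tfrac14\scal_g \;+\; \phi^2 \;+\; c(\nabla\phi)\,\Gamma ,
\]
$c(\cdot)$ denoting Clifford multiplication; with \ref{item:geq_scal} and $\lvert c(\nabla\phi)\Gamma\rvert \leq \lvert\nabla\phi\rvert$ this gives $D_\phi^{\,2} \geq \nabla^{*}\nabla + \phi^2 - \lvert\nabla\phi\rvert$. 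By construction $\phi^2 - \lvert\nabla\phi\rvert \geq 0$ on $M \setminus \mathcal{E}$, while on $\mathcal{E}$ its deficit is at most $\varepsilon(R)\, r^{-2}$, which is absorbable into $\lVert\nabla(\cdot)\rVert_{L^2(\mathcal{E})}^2$ via the weighted Hardy inequality on the asymptotically Euclidean end, with an absorption constant tending to $0$ as $R \to \infty$. Crucially, the blow-up of $\phi$ along $\partial\Nbh$ makes $D_\phi$ essentially self-adjoint on the complete metric space $\overline{\Nbh}$ — this is where \ref{item:complete} enters — so that Green's formula for $D_\phi$ over $\Nbh$ carries no boundary contribution from $\partial\Nbh$.

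With this in place, one solves $D_\phi\psi = 0$ with $\psi = \chi\psi_0 + \xi$, where $\chi$ is a cutoff equal to $1$ near infinity in $\mathcal{E}$ and supported in $\mathcal{E}$, and $\xi \in L^2(\Nbh)$: the inhomogeneity $\eta := -D_\phi(\chi\psi_0) = -\big(c(\nabla\chi)\psi_0 + \chi D\psi_0 + \phi\chi\psi_0\big)$ lies in $L^2(\Nbh)$ — here $\tau > (n-2)/2$ is exactly what makes $D\psi_0$ square-integrable — and $D_\phi$ is invertible on $L^2$, its kernel being trivial by the above Weitzenböck lower bound together with the Hardy absorption (and the absence of boundary terms). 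Integrating the Weitzenböck identity against $\psi$ and using $D_\phi\psi = 0$, $\scal_g \geq 0$, the absorption of the deficit on $\mathcal{E}$, and the fact that the only surviving boundary term is Witten's mass integral at infinity, one obtains
\[
c_n\,\mass(\mathcal{E},g) \;=\; \lVert\nabla\psi\rVert_{L^2(\Nbh)}^2 \;+\; (\text{nonnegative terms}) \;-\; (\text{errors bounded by } C(\mathcal{E},g)/R) \;\geq\; -\,\frac{C(\mathcal{E},g)}{R},
\]
where $C(\mathcal{E},g)$ controls the cutoff, cross and deficit terms and depends only on $\mass(\mathcal{E},g)$, the decay rate $\tau$, the Hardy constant of $\mathcal{E}$ and the fixed core. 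Hence $\mass(\mathcal{E},g) \geq -C(\mathcal{E},g)/(c_n R)$, and choosing $R = R(\mathcal{E},g)$ larger than $2\,C(\mathcal{E},g)/(c_n\lvert\mass(\mathcal{E},g)\rvert)$ — and large enough for the construction and absorption above to be valid — yields the desired contradiction.

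The analytic heart, and the step I expect to cost the most, is making the middle two paragraphs rigorous. One must choose the profile of $\phi$ — its decay on $\mathcal{E}$, how it ignites near $\partial\mathcal{E}$, its blow-up rate at $\partial\Nbh$ — so that: (i) $D_\phi$ is self-adjoint, has closed range, and has trivial kernel on $\overline{\Nbh}$ \emph{despite} the possibly wild geometry of $M$ away from $\mathcal{E}$, which requires combining the completeness of $\overline{\Nbh}$ with the confining potential and a careful treatment of the collar of $\partial\mathcal{E}$; (ii) the Weitzenböck deficit — which cannot be removed under the mere hypothesis $\scal_g \geq 0$, as opposed to a strictly positive lower bound — is genuinely absorbed by the Hardy inequality on the end; and (iii) every error term is bounded by a quantity depending only on the asymptotic geometry of $(\mathcal{E},g)$, so that a single $R(\mathcal{E},g)$ works uniformly over \emph{all} admissible extensions $M$. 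A more routine point is to verify that, in this truncated and deformed setting, the boundary term produced at infinity still converges to $c_n\,\mass(\mathcal{E},g)$, as is classical on complete asymptotically Euclidean manifolds.
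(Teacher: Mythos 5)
Your overall strategy coincides with the paper's at its quantitative core: deform Witten's operator by a Callias potential that vanishes far out in \(\mathcal{E}\), has size of order \(1/R\) where it ignites on the end, and grows by Riccati comparison towards the far region, so that the only negative term in the Lichnerowicz identity is supported on the end and is absorbed by a weighted Poincaré (Hardy) inequality whose constant depends only on \((\mathcal{E},g)\); this is exactly how the paper obtains \(\mass(\mathcal{E},g)\geq -C(\mathcal{E},g)/R\). Where you genuinely diverge is the treatment of the far region: the paper truncates \(M\) along a smooth compact hypersurface inside the good region, imposes the chiral boundary condition \(\chi\), lets a \emph{bounded} potential grow just enough to dominate the (possibly negative) mean curvature term, and then uses the weighted Fredholm theory for \(\Callias_\psi\colon\SobolevH^1_{-q}\to\Lp^2\) on AE manifolds with compact boundary (\cref{prop:CalliasProperties}, \cref{thm:isomorphism}); you instead work on the incomplete manifold \(\Nbh\) with a potential blowing up at the metric frontier \(\partial\Nbh\) and appeal to essential self-adjointness to discard boundary contributions.

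It is precisely these far-region steps that are asserted rather than proved, and as stated two of them fail. (1) Solvability: \enquote{self-adjoint with trivial kernel, hence invertible on \(\Lp^2\)} is not available. Since \(\phi\to 0\) on the end, \(D_\phi\) is asymptotically the Euclidean Dirac operator, so \(0\) lies in its essential spectrum on \(\Lp^2\); the Witten correction term naturally lives in the weighted space \(\SobolevH^1_{-q}\), the Hardy inequality controls only the \(\Lp^2_{-q}\)-norm, and existence of \(\xi\) requires a Fredholm alternative for \(\Callias_\phi\colon\SobolevH^1_{-q}\to\Lp^2\) together with an estimate of \(\xi\) on the deficit region with constants independent of the unknown geometry of \(\Nbh\setminus\mathcal{E}\). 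Establishing this is a substantial part of the paper and relies there on the compact smooth boundary and the elliptic boundary condition; it is unclear how to obtain it on \(\Nbh\), whose frontier \(\partial\Nbh\) is merely a closed subset, not a hypersurface, with a singular potential in place of boundary conditions. (2) The \enquote{no boundary term} step: even granting essential self-adjointness on \(\Lp^2\), your section \(\psi=\chi\psi_0+\xi\) is neither compactly supported nor in \(\Lp^2\), so discarding the contribution of \(\partial\Nbh\) in Green's formula needs an Agmon-type cutoff argument whose errors are controlled by quantitative positivity of \(\phi^2-|\dd\phi|\) near the frontier; with your exact Riccati profile \(\phi'=\phi^2\) this quantity vanishes identically, so the absorption is borderline and fails as written (one would need, say, \(\phi\sim a/(R-d)\) with \(a>1\), and then redo the bookkeeping and the constant in \(R\)). (3) A smaller but real slip: with the deficit bound \(|\dd\phi|\leq\phi^2+\varepsilon(R)\,r^{-2}\) spread over all of \(\mathcal{E}\), the error term \(\int_{\mathcal{E}}\varepsilon(R)r^{-2}|\chi\psi_0|^2\dV\) diverges for every \(n\geq 3\) because \(|\psi_0|\) does not decay; the deficit must be integrable against a bounded spinor, which is why the paper keeps the transition compactly supported in a fixed collar (cost \(\lambda|\dd f|\)) and absorbs it via \cref{prop:weighted_poincare} on \(X_0\) alone. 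These issues are plausibly repairable, but they constitute the analytic heart of the theorem, and the paper's boundary-value formulation can be viewed as the device that makes your far-region step rigorous while avoiding the singular-potential analysis altogether.
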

Intuitively this means that, if the mass of an asymptotically Euclidean end \(\mathcal{E}\) is negative, then the hypotheses of (the spin proof of) the positive mass theorem must be violated in the \(R\)-neighborhood around \(\mathcal{E}\).
Here \(R = R(\mathcal{E},g)\) may be large but the crucial feature is that it is a constant which only depends on \(\mathcal{E}\) and not on the entire ambient manifold \(M\).
Of course, condition \labelcref{item:spin} should be viewed as conjecturally redundant, and can be dropped from the statement of \cref{thm:A} if \(M\) is assumed to be spin.
Nevertheless it is intriguing to formulate the result in this way because it shows that an augmentation of Witten's method can address certain non-spin manifolds provided that the reason for being non-spin is sufficiently far away from the chosen end.
\Cref{thm:A} implies \cref{conj:PMTAE} for spin manifolds as a formal consequence.
In addition, it turns out that our proof of \cref{thm:A} allows us to also address zero mass rigidity.
Combining these statements, we therbey obtain an alternative proof of (the Riemannian version of) the result due to \citeauthor{BC03}~\cite[Theorem~11.2]{BC03} on \cref{conj:PMTAE} in the spin setting:
\begin{thmx}\label{thm:B}
Let $(M,g)$ be a complete connected $n$-dimensional Riemannian spin manifold without boundary such that $\scal_g\geq 0$ and let $\mathcal E \subset M$ be an asymptotically Euclidean end.
Then $\mass(\mathcal E,g)\geq 0$.
Moreover, if \(\mass(\mathcal E,g) = 0\), then \((M,g)\) is flat and thus isometric to Euclidean space. 
\end{thmx}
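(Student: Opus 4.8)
The inequality \(\mass(\mathcal E,g)\geq 0\) is a purely formal consequence of \cref{thm:A}. I would argue by contradiction: assume \(\mass(\mathcal E,g)<0\) and let \(R=R(\mathcal E,g)\) be the constant furnished by \cref{thm:A}, applied with ambient manifold \(M\) and with \(\Nbh=\Nbh_R(\mathcal E)\). Since \(M\) is complete and \(\overline{\Nbh}\) is a closed subset of \(M\), it is metrically complete, so \labelcref{item:complete} holds; since \(\scal_g\geq 0\) everywhere on \(M\), also \labelcref{item:geq_scal} holds; and since \(M\) is spin, so is the open submanifold \(\Nbh\), giving \labelcref{item:spin}. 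This contradicts \cref{thm:A}, whence \(\mass(\mathcal E,g)\geq 0\).

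For the rigidity statement, assume now \(\mass(\mathcal E,g)=0\). The plan is to revisit the spinorial mechanism underlying the proof of \cref{thm:A} in this borderline case. Recall that \cref{thm:A} is proved by an augmentation of Witten's method: on the neighborhood \(\Nbh_R(\mathcal E)\), suitably modified near its inner boundary to compensate for the missing global asymptotically Euclidean structure, one produces for every prescribed constant asymptotic value \(\psi_\infty\) along \(\mathcal E\) a harmonic spinor \(\psi\) for which the Lichnerowicz–Weitzenböck identity integrates to an estimate of the form
\[
\int\!\Bigl(|\nabla\psi|^2+\tfrac14\,\scal_g\,|\psi|^2\Bigr)\;\leq\;c_n\,\mass(\mathcal E,g)\,|\psi_\infty|^2+\varepsilon,
\]
with \(c_n>0\) a dimensional constant and \(\varepsilon\) an error contributed by the auxiliary inner region that is driven to \(0\) as the parameters of the construction are sent to their limits. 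When \(\mass(\mathcal E,g)=0\) and \(\scal_g\geq 0\), both nonnegative terms on the left must vanish in the limit, so the limiting spinor satisfies \(\nabla\psi\equiv 0\) (and \(\scal_g\,|\psi|^2\equiv 0\)); in particular \(|\psi|\equiv|\psi_\infty|\neq 0\). Carrying this out for each \(R\) and for a basis of asymptotic values \(\psi_\infty\), and using that \(M\) is connected together with uniqueness of a parallel spinor with prescribed asymptotic value, one obtains a parallel trivialization of the spinor bundle over all of \(M\).

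From a parallel frame of the spinor bundle the conclusion is classical: the curvature operator acts trivially on every spinor, so \(\sum_{i,j}R_{XYij}\,e_i e_j=0\) in the Clifford algebra for all \(X,Y\), which forces the Riemann tensor to vanish; hence \((M,g)\) is flat. A complete connected flat Riemannian manifold that contains an asymptotically Euclidean end is isometric to Euclidean space \(\R^n\), since the end is simply connected (for \(n\geq 3\)) and its metric is Euclidean up to lower order, ruling out any nontrivial Bieberbach quotient; this yields the claim. The only non-classical point — and thus the main obstacle — is the passage within the proof of \cref{thm:A} from the negative-mass dichotomy to a genuine \emph{parallel} spinor when \(\mass(\mathcal E,g)=0\): one must check that the auxiliary construction near the inner boundary degrades gracefully enough that \(\varepsilon\to 0\), \emph{and} that the limiting harmonic spinor is actually defined and parallel on all of \(M\), not merely on each finite neighborhood \(\Nbh_R(\mathcal E)\), so that the resulting trivialization is global and flatness propagates from the chosen end to the entire connected manifold.
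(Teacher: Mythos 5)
Your derivation of \(\mass(\mathcal E,g)\geq 0\) as a formal consequence of \cref{thm:A} is correct and is exactly the alternative the paper itself points out. The problem is the rigidity part: your outline follows the right strategy (the gradient and scalar curvature terms in the Witten-type estimate are forced to vanish in the limit when the mass is zero, a parallel trivialization of the spinor bundle then gives flatness, and flatness plus an AE end gives \(\R^n\)), but the two steps you yourself flag as ``the main obstacle'' are precisely the content of the proof, and you do not supply them. First, the reason the error \(\varepsilon\) tends to zero is not automatic ``graceful degradation'': in the construction of \cref{thm:A} the negative part of the potential term \(\theta_\lambda\) is supported in a fixed collar \(K\subseteq X_0\) of the chosen end and is bounded there by \(\lambda|\D f|\), while the \(\Lp^2_{-q}(X_0)\)-norm of the correction term \(\xi_\lambda\) is uniformly controlled by the partial injectivity estimate \labelcref{eq:LPMT_partial_injectivity_estimate} coming from \cref{thm:isomorphism}; this is what makes the error in \labelcref{eq:arbitrary_ends_mass_estimate} of size \(O(\lambda)\), and completeness, spin and \(\scal_g\geq 0\) on all of \(M\) are what force \(\lambda_0=0\) so that \(\lambda\searrow 0\) is allowed. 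Without these quantitative inputs your inequality with ``\(+\,\varepsilon\)'' is an assertion, not an estimate.

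Second, and more seriously, knowing \(\|\nabla u_\lambda\|_{\Lp^2}\to 0\) does not by itself produce a ``limiting spinor'': you need a compactness or convergence mechanism to extract an actual section that is parallel, and you need it on all of \(M\). The paper gets this from the weighted Poincaré inequality (\cref{prop:weighted_poincare}): on an arbitrary fixed codimension zero \(X\supseteq X_0\) with \(X\setminus X_0\) relatively compact, one has \(\|\xi_\lambda-\xi_{\lambda'}\|_{\SobolevH^1_{-q}(X)}\leq C_X\|\nabla u_\lambda-\nabla u_{\lambda'}\|_{\Lp^2(X)}\to 0\), so \(\xi_\lambda\) converges in \(\SobolevH^1_{-q}(X)\), the limit \(u=\xi_0+\lim\xi_\lambda\) satisfies \(\nabla u=0\) on \(X\), and since \(X\) was arbitrary (and limits are unique) these local limits patch to a parallel section on all of \(M\) asymptotic to \(\xi_0\); running this over a basis of constant spinors then yields the parallel frame. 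Your appeal to ``uniqueness of a parallel spinor with prescribed asymptotic value'' presupposes that a parallel spinor exists, which is exactly what has to be proved. Until you supply the uniform bound on \(\xi_\lambda\) and the Poincaré-type convergence argument (or an equivalent substitute), the rigidity claim is not established; the remaining steps (parallel frame \(\Rightarrow\) flat, flat complete with an AE end \(\Rightarrow\) Euclidean via Cartan--Hadamard) are indeed classical, as you say.
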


The value of the number \(R(\mathcal{E},g) > 0\) appearing in \cref{thm:A} could in principle be traced through our proof but it crucially depends on the constant in a weighted Poincaré inequality on the chosen asymptotically Euclidean end and thus it appears difficult to make explicit.
Note that \citeauthor{Lesourd-Unger-Yau:Positive_mass_ends} have also established a quantitative theorem in their approach to \cref{conj:PMTAE} which involves more explicit estimates; see~\cite[Theorem~1.6]{lesourd2021positive}.
However, unlike our \cref{thm:A}, their quantitative theorem relies on a largeness assumption on the scalar curvature which forces it to be strictly positive in a certain region.
Therefore in \cite{lesourd2021positive} an intermediary step involving a conformal perturbation of the metric is necessary in order to attack \cref{conj:PMTAE}.
Since in the situation of \cref{conj:PMTAE} the part outside of the chosen end is an arbitrary complete manifold, this is a subtle point and here the stronger Schwarzschild asymptotics assumed in \cite{lesourd2021positive} play an important role.
In comparison, our approach completely circumvents this issue by leaving the metric in place.
Instead, we perform a careful perturbation of the Dirac operator used in Witten's approach to localize the problem in a neighborhood of the chosen end \(\mathcal{E}\).
In this way, we are able to prove our results under asymptotically Euclidean asymptotics and establish rigidity.

Nevertheless, if we do assume that the scalar curvature satisfies a suitable largeness assumption, then our method also allows to prove explicit estimates inspired by the quantitative theorem of \citeauthor{Lesourd-Unger-Yau:Positive_mass_ends}~\cite[Theorem~1.6]{lesourd2021positive}.
Another feature is that under the hood we work with manifolds which have a non-empty interior boundary and we impose conditions on the boundary mean curvature (which, crucially, may be negative).
This leads to the following result which relates positivity of the ADM-mass to a quantitative relationship between a positive lower bound on the scalar curvature in a certain region, the width of this region, its distance to the boundary, and a negative lower bound on mean curvature of the boundary.
\begin{thmx}\label{thm:C}
Let $(X,g)$ be an $n$-dimensional complete asymptotically Euclidean spin manifold of nonnegative scalar curvature with compact boundary.
Let $X_0 \subseteq X_1 \subseteq X$ be codimension zero submanifolds with boundary such that $X_0$ contains all asymptotically Euclidean ends of $X$.
Moreover, we assume that \(\scal_g \geq \kappa n(n-1)\) on \(X_1 \setminus X_0\) for some \(\kappa >0\).
We let \(d = \dist_g(\partial X_0, \partial X_1)\) and \(l = \dist_g(\partial X_1, \partial X)\) and define
\[
    \Psi(d,l) \coloneqq \begin{cases}  
        \frac{2}{n} \frac{\lambda(d)}{1-l \lambda(d)} & \text{if \(d < \frac{\pi}{\sqrt{\kappa} n}\) and \(l < \frac{1}{\lambda(d)}\),} \\
        \infty & \text{otherwise,}
     \end{cases} 
     \qquad \text{where \(\lambda(d) \coloneqq \frac{\sqrt{\kappa} n}{2} \tan\left(\frac{ \sqrt{\kappa} n d}{2}\right)\)}.
\]
In this situation, if the mean curvature of \(\partial X\) satisfies
\[
    \mean_g > - \Psi(d,l) \quad \text{on \(\partial X\)},
\]
then the ADM-mass of each end of $X$ is strictly positive.
\end{thmx}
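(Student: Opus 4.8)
The plan is to run Witten's spinorial argument~\cite{Witten:positive_mass} for a Callias-type perturbation of the Dirac operator on $X$, equipped with a suitable boundary condition on the compact boundary $\partial X$, choosing the perturbing potential so that it climbs as steeply as the scalar curvature in the collar $X_1\setminus X_0$ allows. Fix an asymptotically Euclidean end $\mathcal E$ of $X$, set $\rho \coloneqq \dist_g(\,\cdot\,,\partial X_0)$, and let $f = f(\rho)\ge 0$ be a nondecreasing Lipschitz function that is identically $0$ on $X_0$ — hence on a neighbourhood of \emph{all} ends of $X$; its precise shape is fixed below. Consider $D_f \coloneqq D + \mathcal W_f$, where $\mathcal W_f$ is the zeroth-order potential used throughout the paper, normalised so that the Lichnerowicz–Weitzenböck identity for $D_f$ contributes an extra bulk term $\mathcal W_f^2 \approx f^2$ and a cross term bounded below by $-|\nabla f|$, and so that, together with the usual mean-curvature term, the boundary integration by parts contributes an integrand proportional to $\mean_g + f$ on $\partial X$; impose the (local chirality / generalised APS type) boundary condition on $\partial X$ making these signs consistent.

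\textbf{Solvability of the boundary value problem.}
Since $f$ vanishes near the ends, $D_f$ agrees with $D$ outside a compact set and so is Fredholm between the same weighted Sobolev spaces as in the setup of \cref{thm:A}. Its cokernel, for the adjoint boundary condition, is trivial: by the Weitzenböck identity below, any cokernel element $\chi$ satisfies $0 = \|P\chi\|^2 + \int_X(\text{nonneg})|\chi|^2 + \int_{\partial X}(\text{nonneg})|\chi|^2$, and the \emph{strict} hypothesis $\mean_g > -\Psi(d,l)$ on the compact hypersurface $\partial X$ makes the boundary integrand strictly positive wherever $\chi\ne 0$; hence $\chi|_{\partial X}\equiv 0$, so $\chi\equiv 0$ by unique continuation from the boundary. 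Therefore $D_f\psi = -D_f\psi_0$ is solvable, where $\psi_0$ is a nonzero parallel spinor near $\mathcal E$ (and zero on the other ends), yielding a nonzero spinor $\psi$ with $D_f\psi = 0$ and $\psi-\psi_0$ in the weighted $L^2$-space.

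\textbf{Choice of $f$ and conclusion.}
Applying the refined Weitzenböck identity — obtained by splitting off the twistor (Penrose) operator $P$, which is what produces the dimensional constants — to $\psi$ yields, up to fixed positive constants $c,c'$,
\begin{multline*}
  0 = \|D_f\psi\|^2 = \|P\psi\|^2 + \int_X\Bigl(\tfrac{n}{4(n-1)}\scal_g + f^2 - |\nabla f|\Bigr)|\psi|^2 \\
  {}+ c'\int_{\partial X}\bigl(\mean_g + f\bigr)|\psi|^2 - c\,\mass(\mathcal E,g)\,|\psi_0|^2 ,
\end{multline*}
the last term being the classical Witten expression for the ADM mass, unaffected by the perturbation since $f\equiv 0$ near the ends. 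Choose $f$ to be the extremal solution of the Riccati differential inequality forcing the bulk integrand to vanish: on $X_1\setminus X_0$, where $\scal_g\ge\kappa n(n-1)$, this reads $f' = f^2 + \tfrac{\kappa n^2}{4}$ with $f(0)=0$, so $f(\rho)=\lambda(\rho)$ and $f$ reaches the finite value $\lambda(d)$ at $\partial X_1$ precisely because $d<\tfrac{\pi}{\sqrt\kappa\,n}$; on $X\setminus X_1$, where only $\scal_g\ge 0$ is available, it degenerates to $f' = f^2$, whose solution starting from $\lambda(d)$ reaches $\tfrac{\lambda(d)}{1-l\lambda(d)} = \Psi(d,l)$ at $\partial X$ precisely because $l<1/\lambda(d)$. (One mollifies $\rho$, using completeness of $(X,g)$, and rounds the two corners slightly, preserving the differential inequality.) With this $f$ the bulk integrand is $\ge 0$, and the boundary integrand equals $c'(\mean_g+\Psi(d,l))|\psi|^2$, which is $\ge 0$ and strictly positive wherever $\psi|_{\partial X}\ne 0$. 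Hence $c\,\mass(\mathcal E,g)|\psi_0|^2\ge 0$, so $\mass(\mathcal E,g)\ge 0$; and equality would force every term to vanish, in particular $\psi|_{\partial X}\equiv 0$, whence $\psi\equiv 0$ by unique continuation from $\partial X$ — contradicting $\psi-\psi_0\to 0$ with $\psi_0\ne 0$. Therefore $\mass(\mathcal E,g)>0$, as claimed.

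\textbf{Expected main obstacle.}
The ODE bookkeeping is elementary once the identity is in hand. The real work lies, first, in constructing $\mathcal W_f$ and the boundary condition so that the Weitzenböck identity and the boundary integration by parts produce precisely the bulk term $\tfrac{n}{4(n-1)}\scal_g + f^2 - |\nabla f|$ and the boundary term $\propto \mean_g + f$ with the sharp $n$-dependent constants reflected in $\Psi$ — this is where the twistor refinement and the exact normalisation of the potential are indispensable; and, second and more substantially, in the functional-analytic core: solvability of this perturbed boundary value problem on an asymptotically Euclidean manifold carrying a compact interior boundary of \emph{possibly negative} mean curvature, together with unique continuation from that boundary. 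The latter is the principal hurdle, and is handled by the Callias-type Fredholm machinery already developed for \cref{thm:A,thm:B}, the positivity input being supplied by the estimate above.
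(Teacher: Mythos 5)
Your proposal is correct in outline and follows essentially the same route as the paper: a Callias operator \(\Dirac + \psi\RelDiracInv\) with chirality boundary condition, a potential glued from the \(\tan\)-type Riccati profile on \(X_1\setminus X_0\) and the \(\lambda/(1-\lambda t)\) profile on \(X\setminus X_1\) (\cref{lem:tangent_growth,lem:escape_to_infinity}), the Penrose-refined mass formula \labelcref{eq:mass_estimate_penrose}, and invertibility of \(\Callias_\psi\) on weighted Sobolev spaces (\cref{prop:CalliasProperties,lem:ScalPositiveIsomorphism,thm:AbstractPositivity}). The only deviations are minor: your kernel/strictness arguments use vanishing of the solution on \(\partial X\) plus unique continuation, where the paper argues via decay near infinity and \cref{lem:unique_continuation}, and your potential's terminal value is \(\tfrac{n}{2}\Psi(d,l)\) rather than \(\Psi(d,l)\) (the boundary integrand is \(\tfrac{n}{2}\mean_g + \psi\), not proportional to \(\mean_g+\psi\)), a bookkeeping slip absorbed by your unspecified constants.
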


While we have formulated this result for globally asymptotically Euclidean manifolds, this again yields as a localized statement by taking \(X\) to be the manifold obtained from cutting off everything outside a sufficiently large neighborhood of a single chosen end.
Indeed, since \(\Psi(d,l)\) tends to \(+ \infty\) as either \(d\) or \(l\) reaches an explicit finite threshold, the restriction on the mean curvature of thereby created boundary components eventually becomes redundant.
In particular, we obtain the following corollary which may be viewed as a slightly sharper variant of \cite[Theorem~1.6]{lesourd2021positive} in the spin setting.

\begin{corx}\label{cor:UpperBound}
    Let $(X,g)$ be an $n$-dimensional asymptotically Euclidean spin manifold of nonnegative scalar curvature with compact boundary.
    Let $X_0 \subseteq X_1 \subseteq X$ be codimension zero submanifolds with boundary such that $X_0$ contains all asymptotically Euclidean ends of $X$.
    Moreover, we assume that \(\scal_g \geq \kappa n(n-1)\) on \(X_1 \setminus X_0\) for some \(\kappa >0\).
    We let \(d = \dist_g(\partial X_0, 
    \partial X_1)\) and \(l = \dist_g(\partial X_1, \partial X)\).
    In this situation, if 
    \[
        \text{either \(d \geq \frac{\pi}{\sqrt{\kappa} n}\)} \qquad \text{or \(l \geq \frac{1}{\lambda(d)}\), where \(\lambda(d) \coloneqq \frac{\sqrt{\kappa} n}{2} \tan\left(\frac{ \sqrt{\kappa} n d}{2}\right)\),}
    \]
     then the ADM-mass of each end of $X$ is strictly positive.
\end{corx}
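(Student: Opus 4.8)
The plan is to read off \cref{cor:UpperBound} directly from \cref{thm:C}. The only thing to notice about the definition of $\Psi$ is that in either of the two cases singled out in the corollary — that is, $d \geq \frac{\pi}{\sqrt{\kappa}\,n}$ or $l \geq \frac{1}{\lambda(d)}$ — one lands in the second (\enquote{otherwise}) branch of the case distinction, so that $\Psi(d,l) = \infty$. Indeed, the first branch is active precisely when $d < \frac{\pi}{\sqrt{\kappa}\,n}$ \emph{and} $l < \frac{1}{\lambda(d)}$ both hold, which is exactly the complement of the union of the two cases in the corollary.

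Granting this, it remains to verify the hypotheses of \cref{thm:C} for the same data $X_0 \subseteq X_1 \subseteq X$ (where, as there, we understand $X$ to be complete). All of them appear verbatim among the assumptions of the corollary except for the mean curvature bound $\mean_g > -\Psi(d,l)$ on $\partial X$. But $\partial X$ is compact by assumption, so $\mean_g$ is a continuous function on a compact set and is in particular bounded from below; hence $\mean_g > -\infty = -\Psi(d,l)$ holds trivially on all of $\partial X$. Therefore \cref{thm:C} applies and gives that the ADM-mass of each end of $X$ is strictly positive, as claimed. (One may moreover allow $\partial X = \emptyset$ via the convention $l = \dist_g(\partial X_1, \partial X) = +\infty$, in which case $l \geq \frac{1}{\lambda(d)}$ is automatic for $d > 0$, so that the corollary also captures the boundaryless situation relevant to the \enquote{localized} picture.)

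Since all of the analytic and geometric substance is contained in \cref{thm:C}, there is no genuine obstacle in this deduction; the only points that require a moment's attention are the bookkeeping of the case distinction defining $\Psi$ and the elementary remark that a continuous function on a compact boundary is bounded below.
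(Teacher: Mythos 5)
Your deduction is correct and is exactly the route the paper takes: the corollary's hypotheses put one in the \(\Psi(d,l)=\infty\) branch, so the mean curvature condition \(\mean_g > -\Psi(d,l)\) in \cref{thm:C} becomes vacuous (the mean curvature of the compact boundary being finite), and \cref{thm:C} gives the conclusion directly. Your remark that completeness of \((X,g)\) is tacitly assumed as in \cref{thm:C} is an appropriate reading of the statement.
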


We note that these estimates are completely analogous to Gromov's metric inequalities with scalar curvature~\cite{Gromov:MetricInequalitiesScalar} and even feature the same constants.
In particular, if $X_1=X$, \cref{cor:UpperBound} states that if $\scal_g\geq n(n-1)$ in $X\setminus X_0$ and $\dist_g(X_0,\partial X) \geq \frac{\pi}{n}$, then the mass of each end must be positive.
This can be interpreted as a \enquote{long neck principle}, originally proposed by Gromov for certain compact manifolds with boundary (compare~\cite[87]{gromovFourLecturesScalar2019v3} and \cite{Ce20}), in the context of the positive mass theorem.
Indeed, on a technical level, the present paper combines Witten's proof of the positive mass theorem with the technique systematically developed by the authors in \cite{Cecchini-Zeidler:ScalarMean} of using Callias operators on spin manifolds to obtain scalar- and mean curvature comparison results related to Gromov's metric inequalities programme.
See also~\cite{Zeidler:band-width-estimates,Guo-Xie-Yu:Quantitative,xie2021quantitative,wang2021proof,raede2021scalar,chodosh2021classifying,perspectives-in-psc:generalized-callias} for other related work in this area.

The paper is organized as follows.
In \cref{sec:Callias}, we develop the theory of Callias operators on asymptotically Euclidean manifolds with compact boundary.
\cref{sec:Long_Neck} is devoted to proving \cref{thm:C}.
Finally, in \cref{sec:PMTAE} we prove \cref{thm:A,thm:B}.

\subsection*{Acknowledgements}
The authors acknowledge the Oberwolfach Research Institute for Mathematics for its hospitality during the 2021 workshop \enquote{Analysis, Geometry and Topology of Positive Scalar Curvature Metrics}, where preliminary results from this paper were presented and part of the work was completed.
We thank Romain Gicquaud and Klaus Kröncke for helpful discussions and comments.
We are also grateful to Piotr Chru\'{s}ciel for pointing us to his work with Bartnik~\cite{BC03}.
\section{Callias operators in asymptotically Euclidean manifolds}\label{sec:Callias}

In this section, we study Callias operators on complete asymptotically Euclidean spin manifolds.
We also briefly discuss the necessary concepts of asymptotically Euclidean manifolds and mass.

\subsection{Asymptotically Euclidean manifolds and mass}\label{SS:AsymptoticallyFlat}
In this subsection, we recall the notions of an asymptotically Euclidean end its ADM-mass.
\begin{defi}\label{defi:AE_end}
    Let \((M,g)\) be a smooth \(n\)-dimensional Riemannian manifold.
    We say an open subset \(\mathcal{E} \subseteq M\) is an \emph{asymptotically Euclidean (AE) end} of order \(\tau > (n-2)/2\) if \(\scal_g\) belongs to \(\Lp^1\) on \(\mathcal{E}\) and there exists a diffeomorphism \(\Phi \colon \mathcal{E} \xrightarrow{\cong} \R^n \setminus \Disk_{d}(0)\) for some \(d > 0\) such that $\Phi^\ast g = \sum_{i,j=1}^n g_{ij}\ \D x^i \otimes \D x^j$ satisfies
\[
    g_{ij} - \delta_{ij}\in \Ct^2_{-\tau}(\R^{n} \setminus \Disk_d(0)),
\]
for all \(1 \leq i,j \leq n\), where \(\Ct^2_{-\tau}\) denotes the weighted \(\Ct^2\)-space of order \(-\tau\), that is, the space of \(\Ct^2\)-functions \(f\) such that the function \(|x|^{\tau+|\alpha|}| \partial^\alpha f|\) is bounded for each multi-index \(0 \leq |\alpha| \leq 2\).
\end{defi}

Once we fix such a diffeomorphism \(\Phi \colon \mathcal{E} \xrightarrow{\cong} \R^n \setminus \Disk_{d}(0)\) for an asymptotically Euclidean end $\mathcal{E}$, we denote the corresponding coordinates by $x = (x^1, \dotsc, x^n)$ and the radial coordinate by $\rho=|x|$.
Following the convention in~\cite[\S3.1.4]{Lee2019-GeometricRelativity}, we define the \emph{ADM-mass} of an AE end $\mathcal{E}$ as the limit
\begin{equation}\label{eq:ADM}
    \mass(\mathcal{E},g)\coloneqq\frac{1}{2(n-1)\omega_{n-1}}\lim_{r\to\infty}\int_{\Sphere^{n-1}_r}\sum_{i,j=1}^n\frac{x^j}{\rho}(\partial_ig_{ij}-\partial_jg_{ii})\,\dSbar,
\end{equation}
where $\omega_{n-1}$ is the volume of the unit \((n-1)\)-sphere, $\Sphere^{n-1}_r \subseteq \mathcal{E}$ is the sphere of radius $r$ with respect to the chosen asymptotically Euclidean coordinates \((x^1, \dotsc, x^n)\), and \(\dSbar\) denotes the volume element on \(\Sphere^{n-1}_r\) with respect to the Euclidean background metric.
Since our \cref{defi:AE_end} already includes the usual mass decay conditions, the quantity \(\mass(\mathcal{E},g) \in \R\) is well-defined, that is, the limit in \labelcref{eq:ADM} exists and is independent of the chosen asymptotically Euclidean coordinate chart \(\Phi\) by foundational results of \citeauthor{Bartnik:MassAsymptoticallyFlat}~\cite{Bartnik:MassAsymptoticallyFlat} and \citeauthor{Chrusciel:Spatial_infinity}~\cite{Chrusciel:Spatial_infinity}.

\begin{ex}\label{ex:Schwarzschild}
    The \emph{Schwarzschild metric} of mass \(m \in \R\), defined in isotropic coordinates by
    \begin{equation}
        (g_m)_{ij} \coloneqq \left(1 +  \frac{m}{2 |x|^{n-2}}\right)^{\frac{4}{n-2}} \delta_{ij} \quad \text{on \(\R^n \setminus \Disk_{d}(0)\)}, \label{eq:schwarzschild}
    \end{equation}
    where \(d\) must be chosen sufficiently large if \(m < 0\), yields a scalar-flat AE end \((\mathcal{E}, g_m)\) such that \(\mass(\mathcal{E}, g_m) = m\).
    More generally, an end \((\mathcal{E}, g)\) is called \emph{asymptotic to Schwarzschild} of mass \(m \in \R\) if in some asymptotically Euclidean coordinate chart the metric satisfies \(g_{ij} - (g_m)_{ij} \in \Ct^2_{1-n}\), where \(g_m\) is as in \labelcref{eq:schwarzschild}.
     An end which is asymptotic to Schwarzschild of mass \(m\) is always AE of order \(\tau = (n-2)\) and has ADM-mass \(m\).
\end{ex}

\begin{defi}\label{defi:asymptotically_flat}
A Riemannian manifold $(X,g)$ with compact boundary is said to be \emph{asymptotically Euclidean} (AE) if there exists a bounded subset $K\subset X$ whose complement $X \setminus K$ is a non-empty disjoint union of finitely many asymptotically Euclidean ends \(\mathcal{E}_1, \dotsc, \mathcal{E}_N \subseteq X\).
\end{defi}

Note that our definition of an AE manifold explicitly allows for \(X\) to have a compact interior boundary \(\partial X\) (which may be empty).

\subsection{Weighted Sobolev spaces on AE manifolds}
Let $(X,g)$ be an asymptotically Euclidean manifold with compact boundary.
We will use weighted Sobolev spaces on \(X\) with coefficients in vector bundles.
To this end, fix a positive smooth function \(\rho \colon X \to (0,\infty)\) such that \(\rho = |x|\) outside a disk in each AE end with respect to some asymptotically Euclidean coordinate systems.
Moreover, we shall assume that \(\rho\) remains uniformly bounded away from \(0\) and \(\infty\) outside of the AE ends (this is automatic if \((X,g)\) is complete).
Let $(E,\nabla)$ be a Hermitian vector bundle with metric connection on \(X\).
For $p\geq 1$ and $\delta\in\R$, we define the \emph{weighted Lebesgue space} as the space of $u\in \Lp^p_\loc(X,E)$ such that the weighted norm
\begin{equation}
    \|u\|_{\Lp^p_\delta(X,E)}\coloneqq \begin{cases}
    \left(\int_X|u|^p\rho^{-\delta p-n} \dV\right)^{1/p} & p < \infty, \\
    \operatorname{ess\ sup}_{x \in X} |u(x)| \rho(x)^{-\delta} & p = \infty
    \end{cases}
\end{equation}
is finite.
For any $k\in\Z_{\geq 0}$, we define the \emph{weighted Sobolev space} $\SobolevW^{k,p}_\delta(X,E)$ as the space of sections $u\in \SobolevW^{k,p}_\loc(X,E)$ such that the weighted Sobolev norm
\begin{equation}
    \|u\|_{\SobolevW^{k,p}_\delta(X,E)}\coloneqq\sum_{i=0}^k\|\nabla^iu\|_{\Lp^p_{\delta-i}(X,E)}
\end{equation}
is finite.
In the case \(p = 2\), we use the usual notation \(\SobolevH^k_\delta \coloneqq \SobolevW^{2,k}_\delta\).

Note that, while these  norms of course depend on the chosen weight function \(\rho\) and thus implicitly on the chosen asymptotically Euclidean coordinates, different choices will lead to equivalent norms.
Insofar as we make any statements involving values of these norms, we shall assume that \(\rho\) has been fixed in advance.

\subsection{Callias operators}\label{sec:witten_relative_dirac_bundle}
In this subsection, we introduce the spinor Dirac operator augmented with a suitable potential which is the main new ingredient in our proofs.
We follow the formal setup from \cite[\S2]{Cecchini-Zeidler:ScalarMean}.

Let $(X,g)$ be a complete asymptotically Euclidean spin manifold with (possibly empty) boundary.
Let $\ReducedSpinBdl\to X$ be the complex spinor bundle on $X$.
Then \(S \coloneqq\ReducedSpinBdl\oplus\ReducedSpinBdl \) becomes a \(\Z/2\)-graded Dirac bundle if we endow it with the direct sum connection \(\nabla = \nabla_{\ReducedSpinBdl} \oplus \nabla_{\ReducedSpinBdl}\), and the Clifford multiplication
\(\clm(\xi)(u_1 \oplus u_2) = (\clm_{\ReducedSpinBdl}(\xi)u_2, \clm_{\ReducedSpinBdl}(\xi)u_1) \), 
where \(\clm_{\ReducedSpinBdl}\) and \(\nabla_{\ReducedSpinBdl}\) are respectively the Clifford multiplication and connection on \(\ReducedSpinBdl\).
Together with the involution
\begin{equation}\label{E:GLepsilon}
    \RelDiracInv \coloneqq \begin{pmatrix} 0 & -\iu \\ \iu & 0 \end{pmatrix} 
\end{equation}
it becomes a relative Dirac bundle in the sense of~\cite[\S2]{Cecchini-Zeidler:ScalarMean}.
The Dirac operator on $S$ is given by
\[
    \Dirac = \begin{pmatrix} 0 & \ReducedSpinDirac \\ \ReducedSpinDirac & 0 \end{pmatrix},
\]
where $\ReducedSpinDirac\colon\Ct^\infty(X,\ReducedSpinBdl)\to \Ct^\infty(X,\ReducedSpinBdl)$ is the spinor Dirac operator on $(X,g)$.
For a function \(\psi \in \Ct^\infty_\cc(X,\R)\), we consider the associated Callias operator
\begin{equation}
    \Callias_\psi\coloneqq\Dirac+\psi\sigma.
\end{equation}
Then direct calculation and Schrödinger--Lichnerowicz formula shows
\[
    \Callias_\psi^2=\Dirac^2+\clm(\D\psi)\sigma+\psi^2=\nabla^\ast\nabla+\frac{\scal_g}{4}+\clm(\D\psi)\sigma+\psi^2.
\]
Let \(\nu\) be the inward-pointing unit normal vector field along \(\partial X\).
Together with \(\RelDiracInv\) this defines the \emph{chirality operator} 
\[
    \chi \coloneqq \clm(\nu^\flat) \RelDiracInv \colon S|_{\partial X} \to S|_{\partial X}.
\]
In the following, we use \(+1\)-eigenbundle of \(\chi\) to define boundary conditions–note that this yields an elliptic boundary condition; see e.g.~\cite[Example~7.26]{Baer-Ballmann:Boundary-value-problems-first-order}.
Similar boundary conditions have been applied previously in the context of the positive mass theorem; see e.g.~\cite{GHHM:PMT_BlackHoles,Herzlich:BlackHoles}.
We use the notation \(\Ct^\infty(X,S;\chi)\) to denote the space of all smooth sections \(u\) of \(S\) such that \(\chi(u|_{\partial X}) = u|_{\partial X}\).
We will use an analogous notation for other function spaces---in particular for Sobolev spaces in which case the restriction to the boundary is to be understood in the trace sense.

Note that the operator \(\Callias_\psi\) we study here is essentially just the operator \(\ReducedSpinDirac + \iu \psi\) together with its formal adjoint \(\ReducedSpinDirac - \iu \psi\), both subject to chiral boundary conditions on the interior boundary.
The reason for considering both at the same time is simply a matter of convenience because it makes certain computations more symmetric and fits more neatly into the formal setup we considered in~\cite{Cecchini-Zeidler:ScalarMean}.

\subsection{Mass formulas}
In this subsection, we relate spectral estimates of the Callias operator \(\Callias_\psi\) to the mass.
This is a minor augmentation of the usual observation from Witten's proof of the positive mass theorem that the mass can be identified with a boundary term at infinity corresponding to Green's formula associated to the spinor Dirac operator.

Let \((\mathcal{E},g)\) be an asymptotically Euclidean end.
We say that a \(g\)-orthonormal tangent frame \((e_1, \dotsc, e_n)\) on \(\mathcal{E}\) is \emph{asymptotically constant} if there exist asymptotically Euclidean coordinates \(x = (x^1, \dotsc, x^n)\) such that \(e_i = \sum_{j} e^{j}_i \coordvf{x^j}\) satisfies \(e^{j}_i - \delta_{ij} \in \Ct^2_{-\tau}\), where \(\tau\) is the fall-off order of the end \(\mathcal{E}\).
Such an othonormal frame can always be found by orthonormalizing the coordinate frame \((\coordvf{x^1}, \dotsc, \coordvf{x^n})\) of an AE coordinate chart.
Note that any orthonormal frame on \(\mathcal{E}\) lifts to a section of the principal \(\Spin(n)\)-bundle and thus induces a trivialization of the spinor bundle \(\ReducedSpinBdl \to \mathcal{E}\).
We say that a section of the bundle \(S = \ReducedSpinBdl \oplus \ReducedSpinBdl \to \mathcal{E}\) is constant with respect to an orthonormal frame if it is constant with respect to this induced trivialization.
\begin{defi}
Let \((X,g)\) be a Riemannian spin manifold and \(\mathcal{E} \subseteq X\) an AE end.
We say that a section $u\in \SobolevH^1_{\loc}(X,S)$ is \emph{asymptotically constant in \(\mathcal{E}\)} if there exists a section \(u_0 \in \Ct^\infty(\mathcal{E},S)\) which is constant with respect to an asymptotically constant orthonormal frame such that $u|_{\mathcal{E}} - u_0 \in \SobolevH^1_{-q}(\mathcal{E},S)$, where \(q \coloneqq (n-2)/2\).
In this case, we define the norm at infinity of \(u\) in \(\mathcal{E}\) by \(|u|_{\mathcal{E}_\infty} \coloneqq |u_0| \in [0,\infty)\) (this is well-defined and independent of \(u_0\) because \(|u_0|\) is constant on \(\mathcal{E}\)).

If \((X,g)\) is a complete AE manifold, we say that $u\in \SobolevH^1_{\loc}(X,S)$ is \emph{asymptotically constant} if it is asymptotically constant in each AE end of $X$.
\end{defi}

In the next proposition, we use Callias operators to estimate the ADM-mass of AE ends.
For any smooth function $\psi$, we use the notation
\begin{equation}\label{eq:theta}
    \theta_\psi = \frac{\scal_g}{4} + \psi^2 - |\D \psi|,\qquad \eta_\psi = \frac{n-1}{2}\mean_g + \psi|_{\partial X}
\end{equation}
and
\begin{equation}\label{eq:bartheta}
    \bar{\theta}_\psi = \frac{n}{n-1}\frac{\scal_g}{4} + \psi^2 - |\D \psi|,\qquad \bar{\eta}_\psi = \frac{n}{2}\mean_g + \psi|_{\partial X}.
\end{equation}

\begin{prop}\label{prop:mass_formulas}
    Let \((X,g)\) be a complete connected asymptotically Euclidean spin manifold with compact boundary and let \(\psi \in \Cc^\infty(X,\R)\).
    Let \(u \in \SobolevH^1_{\loc}(X,S; \chi)\) be asymptotically constant.
    Then
    \begin{multline}
       \frac{n-1}{2} \omega_{n-1} \sum_{\mathcal{E}} \mass(\mathcal{E}, g) |u|_{\mathcal{E_\infty}}^2 + \|\Callias_\psi u\|_{\Lp^2(X)}^2 \geq \\
     \geq \|\nabla u\|_{\Lp^2(X)}^2 + \int_X \theta_{\psi} |u|^2 \dV + \int_{\partial X} \eta_{\psi} |u|^2 \dS. \label{eq:mass_estimate}
    \end{multline}
    and
    \begin{multline}
       \frac{n}{2} \omega_{n-1} \sum_{\mathcal{E}} \mass(\mathcal{E}, g) |u|_{\mathcal{E_\infty}}^2 + \|\Callias_\psi u\|_{\Lp^2(X)}^2 \geq \\
     \geq \frac{n}{n-1}\|\Penrose u\|_{\Lp^2(X)}^2 + \int_X \bar{\theta}_\psi |u|^2 \dV + \int_{\partial X} \bar{\eta}_\psi |u|^2 \dS. \label{eq:mass_estimate_penrose}
    \end{multline}
    The sums on the left-hand side are taken over all the asymptotically Euclidean ends \(\mathcal{E}\).
    Moreover, \(\Penrose\) in \labelcref{eq:mass_estimate_penrose} denotes the \emph{Penrose operator} defined by \(\Penrose_\xi u = \nabla_\xi u + \frac{1}{n} \clm(\xi^\flat) \Dirac u\).
\end{prop}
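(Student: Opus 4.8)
The plan is to run Witten's integration-by-parts argument for the spinor Dirac operator, but applied to the Callias operator $\Callias_\psi = \Dirac + \psi\sigma$ on the relative Dirac bundle $S = \ReducedSpinBdl \oplus \ReducedSpinBdl$, keeping careful track of both the boundary term at infinity (which produces the mass) and the interior boundary term along $\partial X$ (which produces the $\eta_\psi$- resp. $\bar\eta_\psi$-term). The starting point is the Bochner-type identity already recorded in the excerpt, namely $\Callias_\psi^2 = \nabla^\ast\nabla + \frac{\scal_g}{4} + \clm(\D\psi)\sigma + \psi^2$, together with its refinement using the Penrose (twistor) operator: from $\Dirac^2 = \nabla^\ast\nabla + \scal_g/4$ and the pointwise Penrose decomposition $|\nabla u|^2 = |\Penrose u|^2 + \frac{1}{n}|\Dirac u|^2$, one gets $\|\Dirac u\|^2 = \frac{n}{n-1}\|\Penrose u\|^2 + \frac{1}{n-1}\int \frac{\scal_g}{4}|u|^2 + (\text{boundary})$ on compact domains. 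Since $|\clm(\D\psi)\sigma| \le |\D\psi|$ pointwise (as $\sigma$ and $\clm(\D\psi)$ are skew/self-adjoint unitaries up to normalization) and $\psi\sigma u$ contributes $\psi^2|u|^2$ plus a cross term that integrates against $\Dirac u$, collecting terms yields exactly the quantities $\theta_\psi$ and $\bar\theta_\psi$ from \labelcref{eq:theta}--\labelcref{eq:bartheta}.

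First I would work on a large compact exhaustion: let $X_r \subseteq X$ be the region bounded by $\partial X$ and the coordinate spheres $\Sphere^{n-1}_r$ in each AE end. On $X_r$, Green's formula for the first-order operator $\Callias_\psi$ gives
\[
  \int_{X_r} \langle \Callias_\psi u, \Callias_\psi u\rangle - \langle \Callias_\psi^2 u, u\rangle \dV
  = \int_{\partial X_r} \langle \clm(\nu^\flat)\Callias_\psi u, u\rangle \dS,
\]
and substituting the Bochner identity converts the left side into $\|\Callias_\psi u\|_{\Lp^2(X_r)}^2 - \|\nabla u\|_{\Lp^2(X_r)}^2 - \int_{X_r}(\frac{\scal_g}{4} + \clm(\D\psi)\sigma + \psi^2)|u|^2$ up to a further boundary term from integrating $\nabla^\ast\nabla$ by parts. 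The boundary $\partial X_r$ has two pieces: the spheres at radius $r$ and the fixed interior boundary $\partial X$. On $\partial X$, the chirality condition $\chi u = u$ with $\chi = \clm(\nu^\flat)\RelDiracInv$ is precisely the elliptic boundary condition designed (as in \cite[\S2]{Cecchini-Zeidler:ScalarMean}) so that the boundary integrand is controlled by the mean curvature; here one invokes the boundary Bochner/Lichnerowicz computation to see that the $\partial X$-contribution is bounded below by $\int_{\partial X}(\frac{n-1}{2}\mean_g + \psi)|u|^2 = \int_{\partial X}\eta_\psi|u|^2$ (resp.\ $\bar\eta_\psi$ in the Penrose-refined version, where the $\frac{n-1}{2}$ is replaced by $\frac{n}{2}$ because one uses $\Penrose u$ on the boundary as well). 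The $\psi$ is compactly supported, so it does not interfere with the analysis at infinity.

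The other boundary piece, the integral over $\Sphere^{n-1}_r$, is where the mass enters: since $u$ is asymptotically constant, $u|_{\mathcal E} - u_0 \in \SobolevH^1_{-q}(\mathcal{E},S)$ with $q = (n-2)/2$, and the weighted decay is exactly tuned so that the cross terms between $u_0$ and the connection coefficients of $g$, when integrated over $\Sphere^{n-1}_r$ and passed to the limit $r \to \infty$, reproduce the ADM integrand \labelcref{eq:ADM} times $|u_0|^2 = |u|_{\mathcal{E}_\infty}^2$ — this is the classical Witten boundary-term-at-infinity computation, which I would carry out in the asymptotically constant orthonormal frame so that the spin connection coefficients are $O(\rho^{-\tau-1})$ and the only surviving term is the one involving $\partial_i g_{ij} - \partial_j g_{ii}$. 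All remaining terms on $\Sphere^{n-1}_r$ (those quadratic in the decaying part, or involving $\psi$ which vanishes for $r$ large) go to zero because $\tau > (n-2)/2$ makes the relevant products lie in $\Lp^1$ against the sphere volume growth $r^{n-1}$. Finally, to legitimately take $r \to \infty$ on the interior terms, I would check that $\nabla u$, $\theta_\psi|u|^2$, $\bar\theta_\psi|u|^2$, $\Callias_\psi u$ and $\Penrose u$ are all in $\Lp^2$ (resp.\ $\Lp^1$) on all of $X$: this follows from $u_0$ being parallel (hence $\nabla u = \nabla(u-u_0) \in \Lp^2_{-q-1} \subseteq \Lp^2$), from $\scal_g \in \Lp^1$ on the ends by \cref{defi:AE_end}, and from $\psi$ having compact support. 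Rearranging then gives \labelcref{eq:mass_estimate} and \labelcref{eq:mass_estimate_penrose}.

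The main obstacle I expect is not the algebra but the analytic bookkeeping at infinity: one must verify that passing to the asymptotically constant frame does not introduce spurious boundary contributions, and that every error term genuinely decays, which requires matching the fall-off rate $\tau > (n-2)/2$ against the weight $q = (n-2)/2$ of the space in which $u - u_0$ lives — a borderline computation where the inequalities are tight. The boundary term along $\partial X$ is comparatively routine since it is handled by the general relative-Dirac-bundle formalism of \cite[\S2]{Cecchini-Zeidler:ScalarMean}, but one does need to double-check that the chirality condition contributes with the correct sign and the stated mean-curvature coefficient in both the plain and the Penrose-refined estimate.
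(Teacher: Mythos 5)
Your proposal follows essentially the same route as the paper's proof: Green's formula for $\Callias_\psi$ on a compact exhaustion bounded by $\partial X$ and large coordinate spheres, the Schrödinger--Lichnerowicz identity with the $\clm(\D\psi)\sigma + \psi^2$ terms giving $\theta_\psi$, the chirality boundary condition handled via the relative-Dirac-bundle computation of \cite[\S4]{Cecchini-Zeidler:ScalarMean} producing the $\eta_\psi$ (resp.\ $\bar\eta_\psi$) term, the classical Witten boundary-at-infinity computation yielding the ADM mass, and the pointwise decomposition $|\nabla u|^2 = |\Penrose u|^2 + \frac{1}{n}|\Dirac u|^2$ for the refined estimate. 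This matches the paper's argument, which likewise cites the standard mass computation and the Cecchini--Zeidler formalism for these steps rather than redoing them.
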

\begin{proof}
Let \(\Omega_r \subseteq X\) be the compact connected domain whose boundary is the union of the interior boundary \(\partial X\) with the coordinate spheres of radius \(r > 0\) in each AE end of \(X\), where we take \(r\) to be sufficiently large for this to make sense.
Then following an analogous computation as in \cite[\S4]{Cecchini-Zeidler:ScalarMean} yields
\begin{align*}
    \int_{\Omega_r} |\Callias_\psi u|^2 \dV &= \int_{\Omega_r} |\Dirac u|^2 + \psi^2 |u|^2 + \langle u, \clm(\dd \psi) \RelDiracInv u \rangle \dV + \int_{\partial \Omega_r} \psi \langle u, \chi u \rangle \dS \\
    &= \int_{\Omega_r} |\nabla u|^2 + \frac{\scal_g}{4}|u|^2 + \psi^2 |u|^2 + \langle u, \clm(\dd \psi) \RelDiracInv u \rangle \dV \\
    &\qquad + \int_{\partial \Omega_r} \langle u, \clm(\nu^\flat) \Dirac u + \nabla_\nu u \rangle + \psi \langle u, \chi u \rangle \dS \\
    &\geq \|\nabla u\|_{\Lp^2(\Omega_r)}^2 + \int_{\Omega_r} \theta_\psi |u|^2 \dV + \int_{\partial X} \eta_\psi |u|^2 \dS \\
    &\qquad + \underbrace{\int_{\partial \Omega_r \setminus \partial X} \langle u, \clm(\nu^\flat) \Dirac u + \nabla_\nu u \rangle \dS}_{\to -\frac{n-1}{2} \omega_{n-1} \sum_{\mathcal{E}} \mass(\mathcal{E}, g) |u|_{\mathcal{E_\infty}}^2} - \underbrace{\int_{\partial \Omega_r \setminus \partial X} \psi |u|^2 \dS}_{\text{\(= 0\) for \(r \gg 1\)}},
\end{align*}
where \(\nu\) denotes the interior unit normal field.
Then letting \(r \to \infty\) proves \labelcref{eq:mass_estimate} by the usual mass computation in the spin proof of the positive mass theorem (see e.g.~\cite[Theorem~6.3]{Bartnick:MassAsymptoticallyFlat}, \cite[Corollary~5.15]{Lee2019-GeometricRelativity}) and because \(\psi\) is compactly supported.
The other estimate \labelcref{eq:mass_estimate_penrose} follows analogously using the formula \(|\nabla u|^2 = |\Penrose u|^2 + \frac{1}{n}|\Dirac u|^2\) and some rearrangement of terms, compare \cite[\S4]{Cecchini-Zeidler:ScalarMean}.
\end{proof}

We conclude this subsection with a technical lemma needed in the proof of some of our main results.

\begin{lem}\label{lem:unique_continuation}
    Let \((X,g)\) be a connected complete asymptotically Euclidean spin manifold, \(\psi\in \Cc^\infty(X,\R)\).
    Let \(u \in \SobolevH^1_\loc(X,S; \chi)\) be such that \(\Callias_\psi u = 0\) and \(\Penrose u = 0\).
    If \(u \neq 0\), then \(u(x) \neq 0\) for every \(x \in X\).
\end{lem}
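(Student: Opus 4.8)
The plan is to rewrite the two hypotheses as a single first-order equation expressing that $u$ is parallel for a modified connection on $S$, and then to exploit that the zero set of a parallel section of a vector bundle with connection is open and closed. As a preliminary step I would first upgrade the regularity of $u$: the equation $\Callias_\psi u = \Dirac u + \psi\sigma u = 0$ is first-order elliptic with smooth coefficients, and the chiral boundary condition defined by $\chi$ is elliptic (as recalled in \cref{sec:witten_relative_dirac_bundle}), so elliptic regularity—both in the interior and up to the boundary—gives $u \in \Ct^\infty(X,S;\chi)$. In particular $u$ is continuous up to $\partial X$ and all the pointwise manipulations below make sense.

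Next I would combine the two equations. From $\Callias_\psi u = 0$ we get $\Dirac u = -\psi\sigma u$, while the definition of the Penrose operator gives, for every tangent vector $\xi$,
\[
    \nabla_\xi u \;=\; -\frac1n\,\clm(\xi^\flat)\,\Dirac u \;=\; \frac{\psi}{n}\,\clm(\xi^\flat)\,\sigma u .
\]
Hence $u$ is a parallel section for the connection $\widetilde\nabla$ on $S$ defined by $\widetilde\nabla_\xi v \coloneqq \nabla_\xi v - \tfrac{\psi}{n}\clm(\xi^\flat)\sigma v$. (Note $\widetilde\nabla$ need not be metric, but this is irrelevant for the argument; in particular $|u|$ is not expected to be constant.)

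Finally I would conclude. The zero set $Z \coloneqq \{x \in X : u(x) = 0\}$ of the $\widetilde\nabla$-parallel section $u$ is closed, and it is also open: if $u(x_0) = 0$, then for any $x_1 \in X$ join $x_0$ to $x_1$ by a smooth path $\gamma$ in $X$ (using that the connected manifold $X$ is smoothly path-connected, including through boundary points), and trivializing $S$ along $\gamma$ by $\widetilde\nabla$-parallel transport turns $t \mapsto u(\gamma(t))$ into a solution of a linear homogeneous first-order ODE with vanishing initial condition; by uniqueness $u(\gamma(t)) \equiv 0$, so $u(x_1) = 0$. Thus $Z$ is empty or all of $X$, and since $u \neq 0$ we get $Z = \emptyset$, which is the claim. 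I do not expect a genuine difficulty here; the only point requiring a little care is the boundary regularity in the preliminary step, but this is routine for elliptic boundary value problems once the boundary condition is known to be elliptic. (Alternatively, one could phrase the propagation step as a weak unique continuation argument directly at the level of $\SobolevH^1_\loc$, thereby sidestepping the regularity discussion altogether.)
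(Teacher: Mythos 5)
Your proposal is correct and follows essentially the same route as the paper: elliptic regularity up to the boundary to get smoothness, then combining \(\Penrose u = 0\) and \(\Callias_\psi u = 0\) to show \(u\) is parallel for the modified (non-metric) connection \(\widetilde\nabla_\xi = \nabla_\xi - \tfrac{\psi}{n}\clm(\xi^\flat)\RelDiracInv\), and concluding by the ODE/parallel-transport argument along paths in the connected manifold \(X\). The only difference is cosmetic phrasing (open-and-closed zero set versus vanishing propagating along every path), so there is nothing to add.
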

\begin{proof}
    First of all, by elliptic regularity up to the boundary, see e.g.~\cite[\S7.4]{Baer-Ballmann:Boundary-value-problems-first-order}, we observe that \(u \in \Ct^\infty(X,S;\chi)\).
    Then the assumptions \(\Penrose u = 0\) and \(\Callias_\psi u = 0\) imply
    \[
        \nabla_\xi u = \Penrose_\xi u - \frac{1}{n} \clm(\xi^\flat) \Dirac u = \frac{\psi}{n} \clm(\xi^\flat) \RelDiracInv u - \frac{1}{n} \clm(\xi^\flat) \Callias_\psi u = \frac{\psi}{n} \clm(\xi^\flat) \RelDiracInv u.
    \]
    Thus \(u\) is parallel with respect to the (not necessarily metric) connection \(\widetilde{\nabla}_\xi \coloneqq \nabla_\xi - \frac{\psi}{n}\clm(\xi^\flat)\RelDiracInv\).
    Hence \(u\) satisfies a linear ordinary differential equation along each smooth path in \(X\). 
    If \(u\) vanishes at a single point it thus must vanish everywhere since \(X\) is connected.
\end{proof}

\subsection{Fredholm properties of the Callias operator on AE manifolds}
In this subsection, we discuss elliptic estimates and Fredholm properties of the Callias operator \(\Callias_\psi = \Dirac + \psi \RelDiracInv\) on AE spin manifolds.
All of this is essentially standard (see e.g.~\cite{Choquet-Bruhat_Christodoulou:EllipticSystems,Bartnick:MassAsymptoticallyFlat}, \cite[\S A.2]{Lee2019-GeometricRelativity}) but our setting is slightly different than in the available literature and so we provide quick proofs of the relevant results needed for our applications.
We deliberately do not aim for the greatest possible generality here.
In particular, we focus on compactly supported potentials, even though the results in this section would go through more generally under suitable fall-off conditions on \(\psi\).

For this entire subsection, we consider the following setup:  Let \((X,g)\) be a complete connected \(n\)-dimensional asymptotically Euclidean spin manifold with compact boundary.
Let \(S = \ReducedSpinBdl \oplus \ReducedSpinBdl\) be the relative Dirac bundle over \(X\) with associated Dirac operator \(\Dirac\) as in \cref{sec:witten_relative_dirac_bundle}. Moreover, we let \(\psi \colon X \to \R\) be a compactly supported smooth function and  consider the Callias operator \(\Callias_\psi = \Dirac + \psi \RelDiracInv\).
As before, we will mainly use the weight $-q \coloneqq -\frac{n-2}{2}$ for the Sobolev spaces in our considerations.

The first main ingredient we will use prominently is the following \emph{weighted Poincaré inequality}.
\begin{prop}[{\cite[Theorem~1.3]{Bartnick:MassAsymptoticallyFlat}, \cite[Theorem~A.28]{Lee2019-GeometricRelativity}}] \label{prop:weighted_poincare}
    Let \(\delta < 0\) be any negative weight.
    Then there exists a constant \(C = C(X,g,\delta) > 0\) such that
    \begin{equation}
        \|u\|_{\Lp^2_{\delta}} \leq C \|\nabla u\|_{\Lp^2_{\delta-1}} \label{eq:weighted_poincare}
    \end{equation}
    for all \(u \in \SobolevH^1_{\delta}(X,S)\).
\end{prop}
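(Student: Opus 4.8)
The plan is to follow the classical route for weighted Poincaré inequalities on AE manifolds (as in \cite[Theorem~1.3]{Bartnick:MassAsymptoticallyFlat} and \cite[Theorem~A.28]{Lee2019-GeometricRelativity}) in three stages: first a sharp Hardy-type estimate on a Euclidean exterior region, then a perturbative transfer to the AE ends of \(X\) leaving only a compactly supported error term, and finally a compactness argument to remove that error term.

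Since \(\nabla\) is a metric connection, Kato's inequality \(\bigl|\D|u|\bigr| \leq |\nabla u|\) reduces the problem to the scalar case, so I may assume \(u \geq 0\) is a function. On \(\R^n \setminus \Disk_r(0)\) with the Euclidean metric, where \(\rho = |x|\), I would set \(\beta \coloneqq -2\delta + 2 - n\), so that the desired estimate reads \(\int |x|^{\beta - 2} u^2 \,\D x \leq C \int |x|^{\beta} |\nabla u|^2 \,\D x\), and integrate the divergence identity
\[
    \operatorname{div}\!\bigl(|x|^{\beta - 2}\, x\, u^2\bigr) = (\beta - 2 + n)\,|x|^{\beta - 2} u^2 + 2\,|x|^{\beta - 2}\, u\, x \cdot \nabla u
\]
over \(\Disk_R(0) \setminus \Disk_r(0)\). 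The boundary flux through the inner sphere \(\Sphere_r\) has a sign favourable to the estimate, while the flux through \(\Sphere_R\) can be sent to zero along a suitable sequence \(R_j \to \infty\) using only \(u \in \Lp^2_\delta\); Cauchy--Schwarz applied to the gradient term then yields \(\|u\|_{\Lp^2_\delta} \leq |\delta|^{-1}\|\nabla u\|_{\Lp^2_{\delta - 1}}\) on the exterior region, the crucial point being that \(\beta - 2 + n = -2\delta > 0\) precisely because \(\delta < 0\).

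Next I would transfer this to \(X\). On each AE end the metric, its volume density, and the induced spin connection differ from the Euclidean model by terms of order \(O(|x|^{-\tau})\) (one derivative worse for the connection), so after enlarging \(r\) the estimate survives on the union of the far ends \(X \setminus \Disk_r\) with an error controlled by \(\tfrac12\|u\|_{\Lp^2_\delta(X \setminus \Disk_r)}\) and hence absorbable on the left. Introducing a cutoff and using that all weight functions are two-sidedly bounded on the compact core \(K \coloneqq \overline{\Disk_{2r}}\), this produces an inequality of the shape \(\|u\|_{\Lp^2_\delta(X)} \leq C\bigl(\|\nabla u\|_{\Lp^2_{\delta - 1}(X)} + \|u\|_{\Lp^2(K)}\bigr)\) for all \(u \in \SobolevH^1_\delta(X, S)\). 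To eliminate the last term I would argue by contradiction: given \(u_j\) with \(\|u_j\|_{\Lp^2_\delta} = 1\) and \(\|\nabla u_j\|_{\Lp^2_{\delta - 1}} \to 0\), this inequality forces \(\liminf_j \|u_j\|_{\Lp^2(K)} > 0\); but \((u_j)\) is bounded in \(\SobolevH^1_{\loc}(X, S)\), so a subsequence converges weakly in \(\SobolevH^1_\delta\) and strongly in \(\Lp^2_{\loc}\) (Rellich along a compact exhaustion) to a section \(u_\infty\) with \(\nabla u_\infty = 0\); since \(X\) is connected, \(u_\infty\) has constant norm, and a nonzero constant cannot lie in \(\Lp^2_\delta\) when \(\delta < 0\), so \(u_\infty = 0\), contradicting \(\|u_\infty\|_{\Lp^2(K)} = \lim_j \|u_j\|_{\Lp^2(K)} > 0\).

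I expect the two steps requiring the most care to be (i) the justification that the flux through \(\Sphere_R\) vanishes along some sequence \(R_j \to \infty\) in the first stage — where one uses that \(R^{-2\delta}\int_{\Sphere_R} u^2 \,\dS\) is, up to a constant, the integrand of the finite integral \(\|u\|_{\Lp^2_\delta}^2\), so its \(\liminf\) is zero — together with the accompanying quadratic-inequality bookkeeping needed to pass to the limit; and (ii) making the perturbative transfer in the second stage quantitative enough that the connection- and metric-error terms are genuinely absorbed for large \(r\), which is exactly the place where the fall-off order \(\tau > 0\) of the AE ends enters. The compactness argument in the third stage is then routine.
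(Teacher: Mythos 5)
Your proposal is correct, but note that the paper does not actually reprove this inequality: it cites Bartnik (Theorem~1.3) and Lee (Theorem~A.28) for the scalar case on AE manifolds \emph{without} boundary, and only supplies two reductions --- Kato's inequality \(|\D|u||\leq|\nabla u|\) to pass from bundle-valued sections to functions, and a doubling argument across the compact boundary \(\partial X\) to reduce to the boundaryless case. You instead reprove the scalar inequality from scratch (Hardy-type divergence identity on a Euclidean exterior region, perturbative transfer to the ends using the fall-off order \(\tau\), removal of the compact-core term by a Rellich/contradiction argument), which is precisely the route of the cited references. What your route buys is that the compact boundary needs no special treatment: the integration by parts happens only on coordinate exterior regions in the ends, \(\partial X\) sits inside the compact core \(K\), and the compactness step uses only interior Rellich together with the fact that a nonzero constant-norm parallel section cannot lie in \(\Lp^2_\delta\) for \(\delta<0\); thus the doubling argument --- which requires some care, as the doubled metric is in general only Lipschitz across \(\partial X\) --- is avoided entirely, at the cost of a longer, self-contained argument where the paper simply outsources the analysis. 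Two small corrections to your write-up: the flux through \(\Sphere_R\) is \(R^{-2\delta+1-n}\int_{\Sphere_R}u^2\,\dS\), i.e.\ \(R\) times the radial integrand \(f(R)=R^{-2\delta-n}\int_{\Sphere_R}u^2\,\dS\) of the finite integral \(\|u\|^2_{\Lp^2_\delta}\) (your stated exponent \(-2\delta\) is off by the sphere measure), and what you need is \(\liminf_{R\to\infty}R\,f(R)=0\), which indeed holds since \(R\,f(R)\geq c>0\) for all large \(R\) would make \(f\) non-integrable; moreover, since you apply Kato first, your stage (ii) only requires comparing the metric, gradient and volume form with their Euclidean models --- the spin-connection error you mention never enters.
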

To be precise, in the literature this weighted Poincaré inequality is only proved for manifolds without boundary and in the scalar case.
However, the statement for AE manifolds with compact boundary can be formally reduced to the case without boundary via a doubling argument.
Moreover, the setting of vector bundles reduces to the scalar case by Kato's inequality \(|\dd |u|| \leq |\nabla u|\).

Another standard result we need is the following weighted version of the Rellich\nobreakdash--Kondrachov compact embedding theorem, the proof of which is standard and directly extends to our setting; see~\cite[Lemma~2.1]{Choquet-Bruhat_Christodoulou:EllipticSystems}.
\begin{prop}
   The inclusion \(\SobolevH^1_{\delta}(X,S) \subset \Lp^2_{\delta'}(X,S)\) is compact if \(\delta < \delta'\).
    \label{prop:rellich}
\end{prop}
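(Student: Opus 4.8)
The plan is to run the classical Rellich--Kondrachov argument, cf.\ \cite[Lemma~2.1]{Choquet-Bruhat_Christodoulou:EllipticSystems}, while keeping track of the two ways compactness can fail: loss of derivative control on compact pieces, which is handled by the ordinary Rellich--Kondrachov theorem, and escape of mass into the ends, which is controlled by the strict gain in weight \(\delta < \delta'\).

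First I would fix the exhaustion of \(X\) by the compact domains \(\Omega_j \coloneqq \{\rho \leq j\}\) for integers \(j\) large enough that \(\Omega_j\) is a smooth compact manifold with boundary (its boundary being \(\partial X\) together with the coordinate spheres \(\{\rho = j\}\) in the AE ends, which are smooth for \(j \gg 1\)). On each \(\Omega_j\) the weight \(\rho\) is bounded between two positive constants, so the weighted norms \(\|\cdot\|_{\SobolevH^1_\delta(\Omega_j)}\) and \(\|\cdot\|_{\Lp^2_{\delta'}(\Omega_j)}\) are equivalent, respectively, to the unweighted norms \(\|\cdot\|_{\SobolevH^1(\Omega_j)}\) and \(\|\cdot\|_{\Lp^2(\Omega_j)}\) (with constants depending on \(j\)). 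Given a sequence \((u_k)\) bounded in \(\SobolevH^1_\delta(X,S)\), its restrictions to \(\Omega_j\) are therefore bounded in \(\SobolevH^1(\Omega_j, S)\), and the usual Rellich--Kondrachov theorem on the compact manifold-with-boundary \(\Omega_j\)---applied componentwise via a finite trivializing atlas for \(S\)---produces a subsequence converging in \(\Lp^2(\Omega_j,S) = \Lp^2_{\delta'}(\Omega_j,S)\). A diagonal argument over \(j\) then yields a single subsequence, which I continue to denote \((u_k)\), converging in \(\Lp^2_{\delta'}(\Omega_j, S)\) for every \(j\).

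The second ingredient is the tail estimate: writing \(\rho^{-2\delta'-n} = \rho^{-2\delta-n}\rho^{2(\delta-\delta')}\) and using \(\delta - \delta' < 0\) gives, for every \(R > 0\) and every \(u\),
\[
    \|u\|_{\Lp^2_{\delta'}(X \setminus \Omega_R, S)}^2 = \int_{X \setminus \Omega_R} |u|^2\,\rho^{-2\delta-n}\,\rho^{2(\delta-\delta')} \dV \leq R^{2(\delta-\delta')}\, \|u\|_{\Lp^2_\delta(X,S)}^2 \leq R^{2(\delta-\delta')}\, \|u\|_{\SobolevH^1_\delta(X,S)}^2 ,
\]
and the prefactor \(R^{2(\delta-\delta')}\) tends to \(0\) as \(R \to \infty\). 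Combining the two ingredients finishes the proof: given \(\varepsilon > 0\), the tail estimate together with the uniform bound on \(\|u_k\|_{\SobolevH^1_\delta}\) lets me choose an integer \(R\) so large that \(\|u_k - u_l\|_{\Lp^2_{\delta'}(X \setminus \Omega_R)} < \varepsilon/2\) for all \(k,l\), while convergence in \(\Lp^2_{\delta'}(\Omega_R,S)\) gives \(\|u_k - u_l\|_{\Lp^2_{\delta'}(\Omega_R)} < \varepsilon/2\) for \(k,l\) large; hence \((u_k)\) is Cauchy in the complete space \(\Lp^2_{\delta'}(X,S)\) and therefore converges.

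I do not expect a genuine obstacle here; the essential point is the weight gain recorded in the tail estimate. The only thing requiring a moment of care is the invocation of the classical Rellich--Kondrachov theorem on \(\Omega_j\)---one should note that the spheres \(\{\rho = j\}\) are smooth hypersurfaces for large \(j\), so \(\Omega_j\) is a bona fide compact manifold with boundary---together with the routine remark that the scalar result transfers to sections of the Hermitian bundle \(S\) over the compact base.
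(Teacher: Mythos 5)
Your argument is correct and is exactly the standard weighted Rellich--Kondrachov proof (classical compactness on the compact exhaustion pieces plus a diagonal argument, with the tail controlled by the weight gain \(\delta-\delta'<0\)), which is precisely what the paper delegates to \cite[Lemma~2.1]{Choquet-Bruhat_Christodoulou:EllipticSystems} without writing it out. The tail estimate and the norm-equivalence on the compact sets \(\Omega_j\) are both verified correctly, so nothing further is needed.
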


We now continue with the preparations for the main results of this subsection.
The first lemma is a suitable interior elliptic estimate needed to establish the Fredholm property.

\begin{lem} \label{lem:H1loc_estimate}
    For every compact subset \(K \subseteq X\)containing a neighborhood of \(\partial X\), there exists a constant \(C = C(X, K, g, \psi) > 0\) such that 
    \begin{equation}
        \|u\|_{\SobolevH^1_{-q}(X)} \leq C \left( \|\Callias_\psi u \|_{\Lp^2(X)} + \|u\|_{\Lp^2_{-q + \epsilon}(X)} +\|u\|_{\SobolevH^1(K)} \right) \label{eq:H1loc_estimate}
    \end{equation}
    for all \(u \in \SobolevH^1_{-q}(X,S)\), where \(\epsilon = \epsilon(n) > 0\) is some positive number depending only on the dimension \(n\).
\end{lem}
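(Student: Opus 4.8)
The plan is to prove the estimate in two regions: away from a compact set (where the operator behaves like the flat Euclidean Laplacian/Dirac operator and one can use scaling) and on the compact set itself (where the last term $\|u\|_{\SobolevH^1(K)}$ already dominates). First I would fix a compact set $K' \subseteq X$ whose interior contains both $K$ and the support of $\psi$, chosen so that $X \setminus K'$ is contained in the disjoint union of the AE ends and carries the asymptotically Euclidean coordinates. On $X \setminus K'$ the potential term vanishes, so $\Callias_\psi u = \Dirac u$ there; moreover in the AE coordinates the metric, connection coefficients, and the symbol of $\Dirac$ differ from their Euclidean counterparts by terms that decay like $\rho^{-\tau}$ (and their derivatives like $\rho^{-\tau-1}$). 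The key point is a \emph{scaled interior elliptic estimate}: covering the region $\{\rho \sim 2^j\}$ by balls of radius comparable to $2^j$, rescaling each such ball to unit size, applying the standard interior elliptic estimate for first-order elliptic operators, and rescaling back, one obtains for each dyadic annulus $A_j = \{2^{j} \le \rho \le 2^{j+1}\}$ an inequality of the schematic form
\[
    \|\nabla u\|_{\Lp^2(A_j)} + 2^{-j}\|u\|_{\Lp^2(A_j)} \leq C\left( \|\Dirac u\|_{\Lp^2(\widehat A_j)} + 2^{-j}\|u\|_{\Lp^2(\widehat A_j)} \right),
\]
where $\widehat A_j$ is a slightly enlarged annulus and $C$ is independent of $j$; this uses that the lower-order perturbation of $\Dirac$ away from Euclidean is $O(\rho^{-\tau})$, which is absorbed for $\rho$ large.

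Next I would multiply the $j$-th annular estimate by the appropriate power of the weight $\rho$, namely $\rho^{2q}$ on the annulus where $\rho \sim 2^{j}$ (matching the definition of $\Lp^2_{-q}$, so that $\|u\|_{\Lp^2_{-q}}^2 \sim \sum_j 2^{2qj}\|u\|_{\Lp^2(A_j)}^2$ up to the fixed $\rho^{-n}$ volume factor, and similarly $\|\nabla u\|_{\Lp^2_{-q-1}}^2 \sim \sum_j 2^{(2q+2)j}\|\nabla u\|_{\Lp^2(A_j)}^2$), and sum over $j$. Because the enlarged annuli $\widehat A_j$ have bounded overlap and the weight $\rho$ is comparable across $\widehat A_j$, the summation yields
\[
    \|\nabla u\|_{\SobolevH^0_{-q-1}(X \setminus K')} + \|u\|_{\Lp^2_{-q}(X\setminus K')} \leq C\left( \|\Dirac u\|_{\Lp^2_{-q-1}(X\setminus K')} + \|u\|_{\Lp^2_{-q}(X\setminus K')}\right).
\]
This is not yet the claimed estimate because the first term on the right is too strong (it is $\Lp^2_{-q-1} \supseteq \Lp^2$ only in the wrong direction); here I would use that $\Callias_\psi u = \Dirac u$ on $X\setminus K'$ together with the inclusion $\Lp^2(X\setminus K') \hookrightarrow \Lp^2_{-q-1}(X\setminus K')$, which holds precisely because $-q-1 = -n/2 < 0$ makes the weight $\rho^{-2(-q-1)-n} = \rho^{2q+2-n} = \rho^{2-n/2}$ — wait, more carefully: $\|\Dirac u\|_{\Lp^2_{-q-1}}^2 = \int |\Dirac u|^2 \rho^{-2(-q-1)-n}\,\dV = \int |\Dirac u|^2 \rho^{2q+2-n}\,\dV$ and since $2q+2-n = n-2+2-n = 0$ we get exactly $\|\Dirac u\|_{\Lp^2_{-q-1}} = \|\Dirac u\|_{\Lp^2} = \|\Callias_\psi u\|_{\Lp^2(X\setminus K')}$ — this is the special role of the weight $-q$. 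Thus the gradient term is controlled directly by $\|\Callias_\psi u\|_{\Lp^2}$ with no loss.

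The remaining issue is the zeroth-order term $\|u\|_{\Lp^2_{-q}(X\setminus K')}$ on the right-hand side, which cannot be absorbed into the left. This is where the parameter $\epsilon$ enters: I would revisit the annular estimates and, instead of just the crude bound above, interpolate — on each $\widehat A_j$ the term $2^{-j}\|u\|_{\Lp^2(\widehat A_j)}$ can be split as a small multiple of $\|\nabla u\|$ plus a large multiple of a slightly-better-weighted norm, or more simply, one observes that outside a large enough compact set the $O(\rho^{-\tau})$ error terms in the operator gain a genuine decay $\rho^{-\tau}$ with $\tau > 0$, so the zeroth-order contribution they produce lives in $\Lp^2_{-q-\tau'}$ for some $\tau' > 0$; setting $\epsilon := \min(\tau', \text{something})$ depending only on $n$ (via, say, the threshold $\tau > (n-2)/2$) and combining with the compact-set estimate (on $K'$ the full $\SobolevH^1_{-q}$ norm restricted to $K'$ is equivalent to $\|u\|_{\SobolevH^1(K')}$, and the interior elliptic estimate near $\partial X$ with the chiral boundary condition — elliptic by \cite[Example~7.26]{Baer-Ballmann:Boundary-value-problems-first-order} — bounds $\|u\|_{\SobolevH^1(K')}$ by $\|\Callias_\psi u\|_{\Lp^2(K')} + \|u\|_{\Lp^2(K')}$, absorbed into the stated $\|u\|_{\SobolevH^1(K)}$ term after enlarging $K$) gives \labelcref{eq:H1loc_estimate}. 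The main obstacle I anticipate is bookkeeping the weight powers through the dyadic decomposition and verifying that the lower-order error terms genuinely gain enough decay to land in $\Lp^2_{-q+\epsilon}$ rather than merely $\Lp^2_{-q}$; this is purely a matter of tracking the fall-off order $\tau > (n-2)/2$ against the weight, and choosing $\epsilon = \epsilon(n)$ small enough (e.g. any $\epsilon < \tau - q$ works, but one wants a clean dimensional constant, so taking $\epsilon$ to be, say, $1/2$ or any fixed value below the guaranteed gap suffices since $\tau$ can always be decreased to a value depending only on $n$).
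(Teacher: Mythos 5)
There is a genuine gap at the decisive step of your argument, namely where the $\epsilon$-improvement is supposed to come from. The zeroth-order term $2^{-j}\|u\|_{\Lp^2(\widehat A_j)}$ in your rescaled interior elliptic estimate is \emph{not} produced by the $O(\rho^{-\tau})$ deviation of the metric from the Euclidean one; it is already present for the exact Euclidean Dirac operator on a flat annulus, since the local estimate $\|v\|_{\SobolevH^1} \lesssim \|\Dirac v\|_{\Lp^2} + \|v\|_{\Lp^2}$ cannot hold without the last term (test it on a constant spinor, which is annihilated by the Euclidean Dirac operator). For the same reason your proposed annulus-wise interpolation $2^{-j}\|u\|_{\Lp^2(\widehat A_j)} \leq \delta\|\nabla u\|_{\Lp^2(\widehat A_j)} + C_\delta\, 2^{-j(1+\epsilon)}\|u\|_{\Lp^2(\widehat A_j)}$ is false for large $j$ (again by constants). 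Consequently, after weighting and summing, the dyadic argument only yields an estimate with $\|u\|_{\Lp^2_{-q}(X)}$ on the right-hand side, which has the same weight as the left-hand side and cannot be absorbed, and no amount of shrinking $\tau$ or choosing $\epsilon$ relative to $\tau-q$ repairs this. Closing the argument along your route requires an additional \emph{global} ingredient: in the classical literature (Bartnik, Lee) this is an estimate for the model operator on $\R^n\setminus B$ at the non-exceptional weight $-q$ with \emph{no} zeroth-order term, proved by Fourier/Mellin-type analysis or an integral identity, after which the metric perturbation and the compactly supported $\psi$ are absorbed; only then does one obtain a scale-broken estimate of the stated form.

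The paper avoids the dyadic machinery entirely: it applies the Schrödinger--Lichnerowicz-based integral inequality \labelcref{eq:mass_estimate} of \cref{prop:mass_formulas} to sections vanishing on $K$ (so boundary and mass terms drop out), getting $\|\Callias_\psi u\|^2_{\Lp^2} \geq \|\nabla u\|^2_{\Lp^2} + \int_X \theta_\psi|u|^2\dV$, then feeds half of the gradient term into the weighted Poincaré inequality (\cref{prop:weighted_poincare}) to recover $\|u\|_{\Lp^2_{-q}}$, and controls $\int_X\theta_\psi|u|^2\dV$ by $\|\theta_\psi\|_{\Lp^\infty_{-2-2\epsilon}}\|u\|^2_{\Lp^2_{-q+\epsilon}}$ with $\epsilon=(n-2)/4$; here the $\epsilon$-gain comes from the decay $\scal_g = \littleo(\rho^{-q-2})$ and the compact support of $\psi$, not from comparing $\Dirac$ with the Euclidean operator. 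A cut-off/gluing step then produces the $\|u\|_{\SobolevH^1(K)}$ term for general $u$. So the global input you are missing is exactly the weighted Poincaré inequality combined with the Lichnerowicz identity (or, alternatively, a Bartnik-type model estimate); as written, your proof does not go through.
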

\begin{proof}
    Let \(\theta_\psi = \frac{\scal_g}{4} + \psi^2 - |\DD \psi|\).
    Since \(\psi\) is compactly supported and \(\scal_g \in \littleo(\rho^{-q-2})\) because \((X,g)\) is AE, we obtain \(\theta_\psi \in \Lp^\infty_{-2-2 \epsilon}(X)\) for \(\epsilon = q/2 = (n-2)/4\).
    Let \(K\) be any compact subset which contains a neighborhood of \(\partial X\).
    Furthermore, let \(C' = C'(X,g,-q)\) be the constant from the weighted Poincaré inequality \eqref{eq:weighted_poincare} for the weight \(\delta = -q\).
    Then for any \(u \in \SobolevH^1_{-q}(X,S)\) which vanishes on \(K\), we obtain from \labelcref{eq:mass_estimate} in \cref{prop:mass_formulas} that
    \begin{align}
        \|\Callias_\psi u\|_{\Lp^2(X)}^2 \geq& \| \nabla u \|_{\Lp^2(X)}^2 + \int_X \theta_\psi |u|^2 \dV \notag \\ 
        &\geq \frac{1}{2} \|\nabla u\|_{\Lp^2(X)}^2 + \frac{1}{2 C'^2} \|u\|_{\Lp^2_{-q}(X)}^2 - \|\theta_\psi\|_{\Lp^\infty_{-2-2\epsilon}} \|u\|_{\Lp^2_{-q+\epsilon}(X)}^2 \notag \\
        &\geq \frac{1}{C''} \|u\|_{\SobolevH^1_{-q}(X)}^2 - \|\theta_\psi\|_{\Lp^\infty_{-2-2\epsilon}} \|u\|_{\Lp^2_{-q+\epsilon}(X)}^2 \label{eq:interior_estimate}
    \end{align}
    for a suitable constant \(C'' = C''(C') > 0\).
    Finally observe that on \(K\) the \(\SobolevH^1_{-q}\)-norm can be controlled in terms the \(\SobolevH^1(K)\)-norm up to a constant depending on the weight function on \(K\).
    Thus a gluing argument using a cut-off function shows that \labelcref{eq:interior_estimate}  implies \eqref{eq:H1loc_estimate} for a suitable constant \(C\) depending on  \(C''\), \(\|\theta_\psi\|_{\Lp^\infty_{-2-2\epsilon}}\), all data on \(K\) and the derivative of the chosen cut-off function.
\end{proof}

\begin{prop}\label{prop:arbitrary_weight_estimate}
    There exists a constant \(C = C(X, g, \psi) > 0\) such that 
    \begin{equation}
        \|u\|_{\SobolevH^1_{-q}(X)} \leq C \left( \|\Callias_\psi u \|_{\Lp^2(X)} +\|u\|_{\Lp^2_{-q+\epsilon}(X)} \right) \label{eq:arbitrary_weight_estimate}
    \end{equation}
    for all \(u \in \SobolevH^1_{-q}(X,S; \chi)\), where \(\epsilon = \epsilon(n) > 0\) is some positive number depending only on the dimension \(n\).
\end{prop}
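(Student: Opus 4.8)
The plan is to deduce \eqref{eq:arbitrary_weight_estimate} directly from the mass estimate \eqref{eq:mass_estimate} of \cref{prop:mass_formulas}, and to recover the zeroth-order part of the weighted norm via the weighted Poincaré inequality \eqref{eq:weighted_poincare}. The first observation is that every \(u \in \SobolevH^1_{-q}(X,S;\chi)\) is asymptotically constant with vanishing section at infinity: in each asymptotically Euclidean end one may take \(u_0 = 0\) because \(q = (n-2)/2\) is precisely the weight appearing in the definition, so \(|u|_{\mathcal{E}_\infty} = 0\) for every end. Hence \eqref{eq:mass_estimate} specializes to
\[
    \|\Callias_\psi u\|_{\Lp^2(X)}^2 \;\geq\; \|\nabla u\|_{\Lp^2(X)}^2 + \int_X \theta_\psi |u|^2 \dV + \int_{\partial X} \eta_\psi |u|^2 \dS ,
\]
so it remains to bound the last two terms from below, up to a small multiple of \(\|\nabla u\|_{\Lp^2(X)}^2\) and a multiple of \(\|u\|_{\Lp^2_{-q+\epsilon}(X)}^2\).

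For the interior term, exactly as in the proof of \cref{lem:H1loc_estimate} we have \(\theta_\psi \in \Lp^\infty_{-2-2\epsilon}(X)\) with \(\epsilon = (n-2)/4\); since \(q = (n-2)/2\) one computes \(\|v\|_{\Lp^2_{-q+\epsilon}(X)}^2 = \int_X |v|^2 \rho^{-2-2\epsilon}\dV\), and therefore \(\int_X \theta_\psi |u|^2\dV \geq -\|\theta_\psi\|_{\Lp^\infty_{-2-2\epsilon}} \|u\|_{\Lp^2_{-q+\epsilon}(X)}^2\). The boundary term is the delicate point, because \(\eta_\psi = \tfrac{n-1}{2}\mean_g + \psi|_{\partial X}\) is in general sign-indefinite. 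Here we use that \(\partial X\) is compact: fixing a collar neighborhood \(N\) of \(\partial X\) with compact closure and applying, for a sufficiently small \(\beta > 0\), the standard trace interpolation inequality \(\|u\|_{\Lp^2(\partial X)}^2 \leq \beta \|\nabla u\|_{\Lp^2(N)}^2 + C_\beta \|u\|_{\Lp^2(N)}^2\) with \(\beta \|\eta_\psi\|_{\Lp^\infty(\partial X)} \leq \tfrac12\), we obtain \(\int_{\partial X}\eta_\psi|u|^2\dS \geq -\tfrac12 \|\nabla u\|_{\Lp^2(X)}^2 - C_\beta \|\eta_\psi\|_{\Lp^\infty(\partial X)} \|u\|_{\Lp^2(N)}^2\). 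Since \(\rho\) is bounded above and below on \(N\), we have \(\|u\|_{\Lp^2(N)}^2 \leq C_N \|u\|_{\Lp^2_{-q+\epsilon}(X)}^2\).

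Combining these bounds with the displayed inequality and absorbing the \(\tfrac12\|\nabla u\|_{\Lp^2(X)}^2\) on the right yields \(\|\nabla u\|_{\Lp^2(X)}^2 \leq 2\|\Callias_\psi u\|_{\Lp^2(X)}^2 + C' \|u\|_{\Lp^2_{-q+\epsilon}(X)}^2\) for a constant \(C' = C'(X,g,\psi)\). Finally, the weighted Poincaré inequality \eqref{eq:weighted_poincare} with weight \(\delta = -q\) gives \(\|u\|_{\Lp^2_{-q}(X)} \leq C'' \|\nabla u\|_{\Lp^2_{-q-1}(X)} = C'' \|\nabla u\|_{\Lp^2(X)}\), where the last equality uses \(-2(-q-1)-n = 0\) for \(q = (n-2)/2\). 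Adding these estimates and taking square roots produces \eqref{eq:arbitrary_weight_estimate}. The main obstacle is exactly the sign-indefinite boundary term \(\int_{\partial X}\eta_\psi|u|^2\): unlike in \cref{lem:H1loc_estimate}, where one worked with sections vanishing near \(\partial X\) so that this term was absent, here it is controlled only through the trace interpolation trick, at the cost of a constant depending on \(X\), \(g\) and \(\psi\) via the chosen collar and via \(\|\eta_\psi\|_{\Lp^\infty(\partial X)}\). As an alternative one could argue by contradiction: given a sequence \(u_k\) with \(\|u_k\|_{\SobolevH^1_{-q}(X)} = 1\) and \(\|\Callias_\psi u_k\|_{\Lp^2(X)} + \|u_k\|_{\Lp^2_{-q+\epsilon}(X)} \to 0\), combine \cref{lem:H1loc_estimate}, the weighted Rellich--Kondrachov theorem \cref{prop:rellich}, and a standard elliptic estimate for the elliptic boundary value problem defined by \(\Callias_\psi\) and \(\chi\) on a compact neighborhood of \(\partial X\) to force the local \(\SobolevH^1\)-norm of \(u_k\) there to zero, contradicting \cref{lem:H1loc_estimate}.
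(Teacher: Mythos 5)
Your main argument is correct, and it is genuinely different from the paper's in how the boundary is handled. The paper deduces the proposition from \cref{lem:H1loc_estimate} and then controls the leftover term \(\|u\|_{\SobolevH^1(K)}\) on a compact neighborhood \(K\) of \(\partial X\) by the elliptic estimate for the chiral boundary condition from B\"ar--Ballmann, \(\|u\|_{\SobolevH^1(K)} \leq C'(\|\Callias_\psi u\|_{\Lp^2(K)} + \|u\|_{\Lp^2(K)})\), before absorbing \(\|u\|_{\Lp^2(K)}\) into \(\|u\|_{\Lp^2_{-q+\epsilon}(X)}\). You instead apply \labelcref{eq:mass_estimate} globally --- correctly observing that every \(u \in \SobolevH^1_{-q}(X,S;\chi)\) is asymptotically constant with \(u_0 = 0\), so \(|u|_{\mathcal{E}_\infty} = 0\) --- which leaves only the sign-indefinite term \(\int_{\partial X} \eta_\psi |u|^2 \dS\); you absorb it via the trace interpolation inequality on a compact collar (applied to \(|u|\) via Kato's inequality if one wishes to stay scalar) together with the \(\Lp^\infty\)-bound on \(\eta_\psi\), and you recover the zeroth-order weighted norm from \cref{prop:weighted_poincare} applied to \(u\) itself on the manifold with boundary, which is legitimate since the paper states that inequality without any boundary condition (via doubling). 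What each route buys: yours bypasses the boundary regularity theory entirely (only the elementary collar trace estimate is needed) and avoids the cut-off/gluing of \cref{lem:H1loc_estimate}, at the price of a constant passing through \(\|\eta_\psi\|_{\Lp^\infty(\partial X)}\), i.e.\ through the mean curvature of \(\partial X\); the paper's route sees the boundary only through ellipticity of the condition \(\chi\) and reuses \cref{lem:H1loc_estimate}, which it needs anyway. Your closing ``alternative'' compactness argument is essentially the paper's proof read contrapositively; as sketched, its last clause should conclude that the combined estimates force \(\|u_k\|_{\SobolevH^1_{-q}(X)} \to 0\), contradicting the normalization, rather than ``contradicting \cref{lem:H1loc_estimate}''.
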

\begin{proof}
    We apply \cref{lem:H1loc_estimate} to obtain a compact subset \(K \subseteq X\) containing a neighborhood of \(\partial X\) and a constant \(C > 0\) such that \eqref{eq:H1loc_estimate} holds for \(\epsilon = \epsilon(n)\).
    Since \(\chi(u|_{\partial X}) = u|_{\partial X}\) is an elliptic boundary condition, there exists another constant \(C' > 0\) such that 
    \[
     \|u\|_{\SobolevH^1(K)} \leq C' \left(\|\Callias_\psi u\|_{\Lp^2(K)} + \|u\|_{\Lp^2(K)}\right)
    \]
    for all \(u \in \SobolevH^1_{-q}(X, S; \chi)\), see~\cite[Lemma~7.3]{Baer-Ballmann:Boundary-value-problems-first-order}.
    Since \(K\) is compact \(\|u\|_{\Lp^2(K)}\) can be controlled by \(\|u\|_{\Lp^2_{\delta}(X)}\)up to constant depending only on \(\delta = -q + \epsilon\) and the weight function on \(K\).
    Thus the desired estimate \eqref{eq:arbitrary_weight_estimate} follows from \eqref{eq:H1loc_estimate}.
\end{proof}

\begin{lem}\label{lem:kernel_of_adjoint}
    Let \(u \in \Lp^2(X,S) \) be such that 
    \begin{equation}
        ( u, \Callias_\psi v )_{\Lp^2(X)} = 0 \qquad \forall v \in \Cc^\infty(X,S;\chi).
        \label{eq:adjoint_kernel_explicit}
    \end{equation}
    Then \(u \in \SobolevH^1_{-q}(X,S;\chi)\) and \(\Callias_\psi u = 0\).
\end{lem}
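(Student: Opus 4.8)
The plan is to prove this in two stages: first establish interior regularity away from the boundary using standard elliptic theory for the formal adjoint, then upgrade to regularity up to the boundary by verifying that the weak formulation \labelcref{eq:adjoint_kernel_explicit} encodes precisely the chiral boundary condition, and finally control the weighted norm at infinity. Since \(\Callias_\psi\) is formally self-adjoint (up to the boundary term, which is the point), condition \labelcref{eq:adjoint_kernel_explicit} says that \(u\) is a weak solution of \(\Callias_\psi u = 0\) in the interior of \(X\). Hence by interior elliptic regularity \(u \in \SobolevH^1_{\loc}(\mathring{X}, S)\) and in fact \(u \in \Ct^\infty(\mathring{X},S)\), with \(\Callias_\psi u = 0\) there in the classical sense. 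This part is routine because \(\psi\) is smooth and compactly supported.

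Next I would handle the boundary. Take \(v \in \Cc^\infty(X,S;\chi)\) supported near \(\partial X\). Using the Green's formula for \(\Callias_\psi\) — which is exactly the identity underlying the computation in the proof of \cref{prop:mass_formulas}, namely \((\Callias_\psi u, v) - (u, \Callias_\psi v) = \int_{\partial X} \langle \clm(\nu^\flat) u, v\rangle \dS\) up to signs — together with \(\Callias_\psi u = 0\) in the interior and \(\chi v|_{\partial X} = v|_{\partial X}\), one reads off that the boundary integral of \(\langle \clm(\nu^\flat) u, v\rangle\) vanishes for all such \(v\); since \(\chi = \clm(\nu^\flat)\RelDiracInv\) and the \(+1\)-eigenbundle of \(\chi\) is half-dimensional, this forces \(\chi(u|_{\partial X}) = u|_{\partial X}\), i.e. \(u\) satisfies the same chiral boundary condition. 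Here one should invoke elliptic regularity up to the boundary for the boundary value problem \((\Callias_\psi, \chi)\) — which is elliptic as recorded in \cref{sec:witten_relative_dirac_bundle} — to first gain enough regularity of \(u\) near \(\partial X\) to make the trace and the integration by parts legitimate; the standard bootstrap via difference quotients in directions tangent to \(\partial X\) combined with the equation gives \(u \in \SobolevH^1\) locally near \(\partial X\) and then \(u \in \Ct^\infty(X,S;\chi)\).

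It remains to see \(u \in \SobolevH^1_{-q}(X,S)\), i.e. that the weighted norm is finite. We already know \(u \in \Lp^2(X,S) = \Lp^2_{-q+\epsilon'}\) for the specific shift making \(\Lp^2 = \Lp^2_{-n/2}\); since \(-q = -(n-2)/2 > -n/2\), membership in \(\Lp^2\) gives \(u \in \Lp^2_{-q+\epsilon}(X,S)\) for the \(\epsilon = \epsilon(n)\) of \cref{lem:H1loc_estimate} once \(\epsilon\) is taken small enough, which we may arrange. Now apply the interior estimate of \cref{lem:H1loc_estimate}: with \(\Callias_\psi u = 0\) and \(u|_K \in \SobolevH^1(K)\) (from the smoothness just established on the compact set \(K\)), the right-hand side of \labelcref{eq:H1loc_estimate} is finite, hence \(\|u\|_{\SobolevH^1_{-q}(X)} < \infty\). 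This yields \(u \in \SobolevH^1_{-q}(X,S;\chi)\), and combined with \(\Callias_\psi u = 0\) already known in the interior and the boundary condition, we conclude \(\Callias_\psi u = 0\) as an identity in \(\Lp^2\).

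The main obstacle is the boundary step: one must be careful that the weak identity \labelcref{eq:adjoint_kernel_explicit}, which a priori only knows \(u \in \Lp^2\), actually has enough regularity to justify the integration by parts producing the boundary term, and then to correctly extract the chiral condition from the fact that the boundary pairing vanishes against the \(+1\)-eigenspace of \(\chi\) (one uses that \(\clm(\nu^\flat)\) anticommutes with \(\chi\), so \(\clm(\nu^\flat)u\) pairs trivially with the \(+1\)-eigenspace iff \(u|_{\partial X}\) lies in it). The cleanest route is to quote the elliptic boundary regularity theory for first-order operators with elliptic local boundary conditions as in \cite{Baer-Ballmann:Boundary-value-problems-first-order}, applied to \((\Callias_\psi,\chi)\), to get \(\SobolevH^1\)-regularity near \(\partial X\) directly from \labelcref{eq:adjoint_kernel_explicit}, after which everything else is bookkeeping with the weighted spaces.
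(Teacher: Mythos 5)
The regularity part of your argument (interior elliptic regularity, then extraction of the chiral boundary condition from Green's formula and boundary regularity for the elliptic boundary value problem \((\Callias_\psi,\chi)\)) is a workable, somewhat more hands-on variant of what the paper does by viewing \(\Callias_\psi\) with domain \(\{u\in\SobolevH^1_\loc(X,S;\chi): u,\Callias_\psi u\in\Lp^2\}\) as a self-adjoint unbounded operator and reading \labelcref{eq:adjoint_kernel_explicit} as \(u\in\dom(T^\ast)=\dom(T)\), \(T^\ast u=0\). Up to that point your proposal is fine.

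The genuine gap is in the last step, where you conclude \(\|u\|_{\SobolevH^1_{-q}(X)}<\infty\) by ``applying'' \cref{lem:H1loc_estimate} to \(u\). That lemma is an a priori estimate: inequality \labelcref{eq:H1loc_estimate} is asserted only for sections already known to lie in \(\SobolevH^1_{-q}(X,S)\). You cannot plug in a section whose membership in \(\SobolevH^1_{-q}\) is exactly what is in question and deduce finiteness of the left-hand side from finiteness of the right-hand side; at this stage \(u\) is only known to be in \(\Lp^2\cap\SobolevH^1_\loc\), and nothing rules out a priori that its weighted \(\SobolevH^1_{-q}\)-norm is infinite while the quantities on the right are finite. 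The missing ingredient is an exhaustion/cut-off argument: choose cut-offs \(\varphi_n\) with \(|\dd\varphi_n|\leq 1\), note that \(\varphi_n u\in\SobolevH^1_{-q}(X,S;\chi)\) because it is compactly supported, apply \cref{prop:arbitrary_weight_estimate} (or \cref{lem:H1loc_estimate}) to \(\varphi_n u\), use \(\Callias_\psi(\varphi_n u)=\clm(\dd\varphi_n)u\) and \(\Lp^2\hookrightarrow\Lp^2_{-q+\epsilon}\) to get a bound independent of \(n\) (of the form \(2C\|u\|_{\Lp^2}\)), and then pass to the limit to conclude \(\|u\|_{\SobolevH^1_{-q}(X)}<\infty\). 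This is precisely how the paper closes the argument; without it your final inference is circular.
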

\begin{proof}
    We temporarily consider \(\Callias_\psi\) as an unbounded operator \(T\) on \(\Lp^2(X,S)\) with domain \(\dom(T) = \{u \in \SobolevH^1_\loc(X,S; \chi) \mid u \in \Lp^2(X,S), \Callias_\psi u \in \Lp^2(X,S)  \}\).
    Since \(\Callias_\psi\) is symmetric and \(\chi\) defines a self-adjoint elliptic boundary condition, it follows that the unbounded operator \(T\) is self-adjoint; see~\cite[\S7.2]{Baer-Ballmann:Boundary-value-problems-first-order}.
    The condition \eqref{eq:adjoint_kernel_explicit} implies that \(u \in \dom(T^\ast) = \dom(T)\) and \(0 = T^\ast u = T u\).
    In particular, \(u \in \SobolevH^1_\loc(X,S; \chi)\).
    To see the desired conclusion it thus suffices to prove that \(\|u\|_{\SobolevH^1_{-q}(X)} < \infty\).
    To this end, let \(\varphi_n \colon X \to [0,1]\) be a sequence of compactly supported smooth functions such that \(\|\dd\varphi_n\|_\infty \leq 1\) and for every compact subset \(L \subseteq X\) we have \(\varphi_n|_{L} = 1\) for all sufficiently large \(n\).
    Then, using \cref{prop:arbitrary_weight_estimate} and the fact that \(\Lp^2 = \Lp^2_{-q-1}\) continuously embeds into \(\Lp^2_{-q+\epsilon}\), we obtain a compact subset \(K \subseteq X\) and a constant \(C > 0\) such that
    \begin{align*}
    \|u\|_{\SobolevH^1_{-q}(X)} &\leq \limsup_{n \to \infty} \|\varphi_n u\|_{\SobolevH^1_{-q}(X)} \\
    &\leq \limsup_{n \to \infty}\ C \left( \|\Callias_\psi (\varphi_n u)\|_{\Lp^2(X)} +  \|\varphi_n u\|_{\Lp^2(X)} \right) \\
    &= \limsup_{n \to \infty}\ C \left( \|(\clm(\dd \varphi_n) u) \|_{\Lp^2(X)} + \|\varphi_n u\|_{\Lp^2(X)} \right) \\
    &\leq 2 C\|u\|_{\Lp^2(X)} < \infty. \qedhere
    \end{align*}
\end{proof}

Finally, the main theorem of this subsection establishes that the Callias operator \(\Callias_\psi\) is always a Fredholm operator and an isomorphism iff it has trivial kernel.

\begin{thm}\label{prop:CalliasProperties}
Let \((X,g)\) be a complete connected \(n\)-dimensional AE spin manifold with compact boundary and let \(\psi \in \Cc^\infty(X,\R)\).
Then $\Callias_\psi = \Dirac + \psi \RelDiracInv$ defines a bounded Fredholm operator
    \begin{equation}
        \Callias_\psi\colon \SobolevH^1_{-q}(X,S;\chi)\to \Lp^2(X,S) \label{eq:bounded_fredholm}
    \end{equation}
    of non-negative index, that is, \(\dim \coker(\Callias_\psi) \leq \dim \ker(\Callias_\psi)\).
    In particular, the map \labelcref{eq:bounded_fredholm} is an isomorphism if and only if it has trivial kernel.
\end{thm}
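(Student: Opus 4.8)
The plan is to run the standard semi-Fredholm argument, importing all the genuine analysis from the preceding subsection. First I would check boundedness of \labelcref{eq:bounded_fredholm}: the Dirac operator $\Dirac$ is first order and built pointwise from $\nabla$, so it sends $\SobolevH^1_{-q}(X,S)$ continuously into $\Lp^2_{-q-1}(X,S) = \Lp^2(X,S)$ (the weight arithmetic $-q-1 = -n/2$ identifies $\Lp^2_{-q-1}$ with the unweighted $\Lp^2$, exactly as already used in the proof of \cref{lem:kernel_of_adjoint}), while $\psi\RelDiracInv$ is multiplication by a compactly supported bounded bundle endomorphism and hence also bounded $\SobolevH^1_{-q}(X,S) \to \Lp^2(X,S)$. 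Restricting to the closed subspace $\SobolevH^1_{-q}(X,S;\chi)$ (closed because the trace map is continuous and the chiral condition is the kernel of the bounded endomorphism $1-\chi$ on boundary sections) gives the asserted boundedness.

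Next I would extract the finite-dimensional kernel and the closed range from the a priori estimate of \cref{prop:arbitrary_weight_estimate} together with the compact embedding of \cref{prop:rellich}. Applying that estimate to $u \in \ker(\Callias_\psi)$ bounds $\|u\|_{\SobolevH^1_{-q}(X)}$ by $C\|u\|_{\Lp^2_{-q+\epsilon}(X)}$, so the compact inclusion $\SobolevH^1_{-q}(X,S;\chi) \hookrightarrow \Lp^2_{-q+\epsilon}(X,S)$ is bounded below on $\ker(\Callias_\psi)$; hence the closed unit ball of the kernel is compact and $\ker(\Callias_\psi)$ is finite-dimensional. For closedness of the range I would split $\SobolevH^1_{-q}(X,S;\chi) = \ker(\Callias_\psi) \oplus E$ with $E$ the orthogonal complement, and establish a coercive estimate $\|u\|_{\SobolevH^1_{-q}(X)} \le C'\|\Callias_\psi u\|_{\Lp^2(X)}$ on $E$ by the usual contradiction/subsequence argument: a normalized sequence $u_j \in E$ with $\Callias_\psi u_j \to 0$ would, after passing to a subsequence convergent in $\Lp^2_{-q+\epsilon}(X,S)$, be Cauchy in $\SobolevH^1_{-q}(X,S)$ by \cref{prop:arbitrary_weight_estimate}, hence converge to a unit vector lying in $E \cap \ker(\Callias_\psi) = \{0\}$, a contradiction. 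Being bounded below on $E$, the operator $\Callias_\psi$ has closed range $\Callias_\psi(E) = \operatorname{ran}(\Callias_\psi)$.

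Finally I would identify the cokernel. Since the range is closed, $\coker(\Callias_\psi) \cong \operatorname{ran}(\Callias_\psi)^{\perp}$ inside $\Lp^2(X,S)$. Any $u$ in this orthogonal complement satisfies in particular $(u, \Callias_\psi v)_{\Lp^2(X)} = 0$ for all $v \in \Cc^\infty(X,S;\chi)$ — note such $v$ automatically lie in $\SobolevH^1_{-q}(X,S;\chi)$ — so \cref{lem:kernel_of_adjoint} yields $u \in \SobolevH^1_{-q}(X,S;\chi)$ with $\Callias_\psi u = 0$, i.e.\ $u \in \ker(\Callias_\psi)$. Hence $\dim\coker(\Callias_\psi) \le \dim\ker(\Callias_\psi) < \infty$, which is exactly Fredholmness of non-negative index. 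The last assertion then follows formally: if $\ker(\Callias_\psi) = 0$ then $\coker(\Callias_\psi) = 0$ too, so the closed range is all of $\Lp^2(X,S)$ and \labelcref{eq:bounded_fredholm} is a continuous bijection, hence an isomorphism by the open mapping theorem; the converse is immediate.

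I do not expect a serious obstacle here, since all the hard inputs — the weighted Poincaré inequality, the boundary elliptic estimate, the weighted Rellich theorem, and the self-adjointness of the chiral boundary problem feeding \cref{lem:kernel_of_adjoint} — are already in place, so only routine functional-analytic bookkeeping remains. The one step demanding a little care is the cokernel identification: one must verify that the $\Lp^2$-orthogonality condition really supplies the hypothesis of \cref{lem:kernel_of_adjoint}, which works precisely because compactly supported smooth sections obeying the chiral boundary condition already belong to $\SobolevH^1_{-q}(X,S;\chi)$.
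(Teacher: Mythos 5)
Your proposal is correct and follows essentially the same route as the paper: boundedness from the compact support of $\psi$ and the pointwise control of $\Dirac$ by $\nabla$, finite-dimensional kernel and closed range via \cref{prop:arbitrary_weight_estimate} combined with \cref{prop:rellich}, and the cokernel bound via \cref{lem:kernel_of_adjoint}. The only cosmetic differences are that the paper works with an arbitrary closed complement of the kernel and phrases the cokernel step through the dual operator under the $\Lp^2$-identification, which amounts to your orthogonal-complement-of-the-range argument.
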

\begin{proof}
    We first note that \(\Callias_\psi = \Dirac + \psi \RelDiracInv\) defines a bounded operator \(\SobolevH^1_{-q}(X,S;\chi)\to \Lp^2(X,S)\) because \(\psi\) is compactly supported and \(\|\Dirac u\|_{\Lp^2}^2 \leq n \|\nabla u\|_{\Lp^2}^2 \leq n \|u\|_{\SobolevH^1_{-q}}^2\).
    
    Next, we prove that \(\ker(\Callias_\psi)\) is finite-dimensional.
    Indeed, let \(u_n \in \ker(\Callias_\psi)\) be any sequence such that \(\|u_n\|_{\SobolevH_{-q}^1} \leq 1\) for all \(n \in \N\).
    Then it follows from \cref{prop:rellich} that \(u_n\) admits a subsequence which converges in \(\Lp^2_{-q+\epsilon}(X,S)\) for \(\epsilon = \epsilon(n)\) as in \cref{prop:arbitrary_weight_estimate}.
    Subsequently, \cref{prop:arbitrary_weight_estimate} shows that this subsequence is Cauchy in \(\SobolevH^1_{-q}(X,S)\) and hence already converges in \(\SobolevH^1_{-q}(X,S)\).
    We thus have observed that the unit ball in \(\ker(\Callias_\psi)\) is compact, hence \(\ker(\Callias_\psi)\) must be finite-dimensional. 

    Furthermore, we claim that \(\Callias_\psi\colon \SobolevH^1_{-q}(X,S;\chi)\to \Lp^2(X,S)\) has closed range.
    Let \(V \subseteq \SobolevH^1_{-q}(X,S;\chi)\) be a closed subspace complementary to \(\ker(\Callias_\psi)\), that is, \(\SobolevH^1_{-q}(X,S;\chi) = \ker(\Callias_\psi) \oplus V\).
    To show that \(\Callias_\psi\) has closed image, it suffices to prove that there exists a constant \(c > 0\) such that \(\|\Callias_\psi v\|_{\Lp^2} \geq c \|v\|_{\SobolevH^1_{-q}}\) for all \(v \in V\).
    If such a constant did not exist, then there would be a sequence \(v_n \in V\) such that \(\|v_n\|_{\SobolevH^1_{-q}} = 1\) and \(\|\Callias_\psi v_n\|_{\Lp^2} \to 0\).
    Again, \(v_n\) admits a subsequence converging to an element \(v \in \Lp^2_{-q+\epsilon}(X)\) and \cref{prop:arbitrary_weight_estimate} implies that this convergence actually takes place in \(\SobolevH^1_{-q}\).
    In particular, \(v \in V\) as \(V\) is closed.
    By continuity, we have \(\|v\|_{\SobolevH^1_{-q}} =1\) and \(\Callias_\psi v = 0\), but this is a contradiction since \(V\) is complementary to the kernel.

    Thus we have seen that \(\Callias_\psi\) has finite-dimensional kernel and closed range.
    Finally, the adjoint (or to be precise, the \emph{dual operator}) of \(\Callias_\psi\) can be viewed as an operator
    \[
        \Callias_\psi^\ast \colon \Lp^2(X,S) \to \left(\SobolevH^1_{-q}(X,S;\chi)\right)^\ast,
    \]
    where we implicitly identify \(\Lp^2(X,S)\) with its dual space via the \(\Lp^2\)-pairing.
    \Cref{lem:kernel_of_adjoint} implies that \(\ker(\Callias_\psi^\ast) \subseteq \ker(\Callias_\psi) \subseteq \SobolevH^1_{-q}(X,S;\chi)\).
    Thus \(\dim \coker(\Callias_\psi) = \dim \ker(\Callias_\psi^\ast) \leq \dim \ker(\Callias_\psi) < \infty\) and the theorem is proved.
\end{proof}

\section{The neck of AE manifolds of nonpositive mass}\label{sec:Long_Neck}
This section is devoted to proving \cref{thm:C}.
Let $(X,g)$ be an $n$-dimensional asymptotically Euclidean manifold with compact boundary.
For a potential $\psi$, let $\bar\theta_\psi$ and $\bar\eta_\psi$ be the functions defined by~\eqref{eq:bartheta}.
Our proof of \cref{thm:C} is based on the following abstract criterion for positivity of the mass.

\begin{thm}\label{thm:AbstractPositivity}
Let $(X,g)$ be a complete asymptotically Euclidean spin manifold with compact boundary and let $\psi \in \Ct^\infty_\cpt(X,\R)$.
Suppose that $\bar\theta_\psi\geq 0$ and $\bar\eta_\psi\geq 0$.
Then the ADM-mass of each asymptotically Euclidean end is non-negative.
If, in addition, $\bar\theta_\psi(x)>0$ or $\bar \eta_\psi(x) > 0$ for some point \(x \in X\) or \(x \in \partial X\), respectively, then the ADM mass of each asymptotically Euclidean end is strictly positive.
\end{thm}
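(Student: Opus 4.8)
The plan is to run Witten's proof of the positive mass theorem with the Callias operator $\Callias_\psi$ in place of the Dirac operator, exploiting the Penrose-refined mass inequality \eqref{eq:mass_estimate_penrose} of \cref{prop:mass_formulas}, whose bulk and boundary terms are exactly $\bar\theta_\psi$ and $\bar\eta_\psi$. Fix an asymptotically Euclidean end $\mathcal{E}$ of $X$ (we may assume $X$ connected). The aim is to produce $u \in \SobolevH^1_\loc(X,S;\chi)$ which is asymptotically constant with $|u|_{\mathcal{E}_\infty} = 1$ and $|u|_{\mathcal{E}'_\infty} = 0$ for every other end $\mathcal{E}'$, and which solves $\Callias_\psi u = 0$. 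Plugging such a $u$ into \eqref{eq:mass_estimate_penrose} gives
\[
   \tfrac{n}{2}\,\omega_{n-1}\,\mass(\mathcal{E},g) \;\geq\; \tfrac{n}{n-1}\|\Penrose u\|_{\Lp^2(X)}^2 + \int_X \bar\theta_\psi |u|^2 \dV + \int_{\partial X}\bar\eta_\psi|u|^2\dS \;\geq\; 0 ,
\]
so nonnegativity follows once $u$ is constructed, and this can then be run over each end.

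The first and crucial step is to show that $\Callias_\psi \colon \SobolevH^1_{-q}(X,S;\chi)\to\Lp^2(X,S)$ has trivial kernel; then \cref{prop:CalliasProperties} makes it an isomorphism, in particular surjective. Let $\phi$ lie in this kernel. Since $\phi$ belongs to a negatively weighted Sobolev space, it is asymptotically constant with vanishing norm at infinity, so applying \eqref{eq:mass_estimate_penrose} to $\phi$ gives $0 \geq \tfrac{n}{n-1}\|\Penrose\phi\|^2 + \int_X\bar\theta_\psi|\phi|^2\dV + \int_{\partial X}\bar\eta_\psi|\phi|^2\dS$; as $\bar\theta_\psi, \bar\eta_\psi \geq 0$, every summand vanishes, in particular $\Penrose\phi = 0$. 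Exactly as in the proof of \cref{lem:unique_continuation}, $\Callias_\psi\phi = 0$ together with $\Penrose\phi = 0$ forces $\phi$ to be parallel for the connection $\widetilde\nabla_\xi = \nabla_\xi - \tfrac{\psi}{n}\clm(\xi^\flat)\RelDiracInv$. Since $\psi$ has compact support, $\phi$ is $\nabla$-parallel near infinity, hence $|\phi|$ is constant near infinity in each end; as $\phi \in \SobolevH^1_{-q}$ this constant must be $0$, so $\phi$ vanishes on an open set, and being $\widetilde\nabla$-parallel on the connected manifold $X$ it vanishes identically.

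For the construction of $u$, pick a unit spinor $u_0$ that is constant with respect to an asymptotically constant frame on $\mathcal{E}$ and, via a cut-off supported in $\mathcal{E}$, extend it to a smooth section $\bar u$ of $S$ over $X$ which equals $u_0$ outside a coordinate ball of $\mathcal{E}$ large enough that $\psi$ vanishes there and which vanishes off $\mathcal{E}$; in particular $\bar u$ vanishes near $\partial X$, so it satisfies the chiral boundary condition trivially. On the region where $\bar u = u_0$ one has $\Callias_\psi\bar u = \Dirac\bar u$, which decays like $\rho^{-\tau-1}$ because the metric is asymptotically Euclidean of order $\tau$, while elsewhere $\Callias_\psi\bar u$ is bounded with compact support. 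The condition $\tau > (n-2)/2 = q$ is precisely what makes $\rho^{-\tau-1}$ square-integrable on the end, so $\Callias_\psi\bar u \in \Lp^2(X,S)$, and by surjectivity there is $w \in \SobolevH^1_{-q}(X,S;\chi)$ with $\Callias_\psi w = -\Callias_\psi\bar u$. Then $u \coloneqq \bar u + w$ solves $\Callias_\psi u = 0$, lies in $\SobolevH^1_\loc(X,S;\chi)$, and inherits asymptotic constancy from $\bar u$ with the prescribed norms at infinity since $w$ lies in the negatively weighted space. The displayed inequality then yields $\mass(\mathcal{E},g)\geq 0$ for each $\mathcal{E}$.

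For the rigidity assertion, suppose $\bar\theta_\psi(x_0) > 0$ for some $x_0\in X$ (the case $\bar\eta_\psi(x_0) > 0$ with $x_0\in\partial X$ is identical) and assume for contradiction that $\mass(\mathcal{E},g) = 0$ for some end $\mathcal{E}$. Then the displayed inequality for the associated $u$ collapses to equality, forcing $\Penrose u = 0$, $\int_X\bar\theta_\psi|u|^2\dV = 0$ and $\int_{\partial X}\bar\eta_\psi|u|^2\dS = 0$. Since $u\neq 0$ (its norm at infinity in $\mathcal{E}$ is $1$), $\Callias_\psi u = 0$ and $\Penrose u = 0$, \cref{lem:unique_continuation} gives $u(x)\neq 0$ for all $x\in X$; together with $\bar\theta_\psi\geq 0$ this forces $\bar\theta_\psi\equiv 0$ on $X$, contradicting $\bar\theta_\psi(x_0) > 0$. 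I expect the genuine obstacle to be the kernel-triviality step: in the classical argument one rules out nontrivial decaying harmonic spinors simply by invoking $\scal_g\geq 0$ and the Schrödinger--Lichnerowicz formula, whereas here only the mixed quantities $\bar\theta_\psi,\bar\eta_\psi$ are controlled, so one must route through \eqref{eq:mass_estimate_penrose} and \cref{lem:unique_continuation} and verify that the chiral boundary term in \eqref{eq:mass_estimate_penrose} has the sign needed to be discarded; the remaining delicate point is the weighted bookkeeping that puts $\Callias_\psi\bar u$ in $\Lp^2$ and keeps $w$ in $\SobolevH^1_{-q}$, which is exactly where $\tau > (n-2)/2$ is used.
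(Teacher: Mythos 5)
Your proposal is correct and takes essentially the same route as the paper: trivial kernel of $\Callias_\psi$ via the Penrose-refined estimate \eqref{eq:mass_estimate_penrose} together with \cref{lem:unique_continuation}, then \cref{prop:CalliasProperties} to produce an asymptotically constant solution of $\Callias_\psi u=0$, the same inequality for non-negativity, and the same unique-continuation argument for strict positivity. The only cosmetic difference is that you spell out the cut-off construction of the asymptotically constant section and the weighted bookkeeping showing $\Dirac u_0\in\Lp^2$ (where $\tau>(n-2)/2$ enters), which the paper leaves implicit.
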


\noindent Before proving \cref{thm:AbstractPositivity}, let us show that it implies \cref{thm:C}.
The next two lemmas will be used to construct suitable potentials.

\begin{lem}\label{lem:tangent_growth}
    Let \(V\) be a compact manifold with boundary such that \(\partial V = \partial_- V \sqcup \partial_+ V\), where \(\partial_\pm V\) are non-empty unions of components.
    Let \(\eta\), \(\delta\) be positive constants such that $\eta\delta<\frac{\pi}{2}$ and suppose that \(\dist_g(\partial_- V, \partial_+ V) > \delta\).
    Then there exists a smooth function \(p \colon V \to \left[0,\eta\tan\left(\eta\delta\right) \right]\) such that \(p = 0\) in a neighborhood of \(\partial_- V\), \(p = \eta\tan\left(\eta\delta\right)\) in a neighborhood of \(\partial_+ V\) and
    \( \eta^2+p^2-|\D p|\geq 0.\)
\end{lem}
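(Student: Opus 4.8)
The plan is to build $p$ as a function of a single real variable, namely (a smoothed version of) the distance to $\partial_- V$, and to reduce the differential inequality $\eta^2 + p^2 - |\D p| \geq 0$ to an ODE on an interval. First I would let $r(x) \coloneqq \dist_g(x, \partial_- V)$; this is a Lipschitz function with $|\D r| \leq 1$ wherever it is differentiable, and by hypothesis $r > \delta$ on $\partial_+ V$, so $\{r \leq \delta\}$ is a compact subset of $V$ disjoint from $\partial_+ V$. The idea is to choose a nondecreasing profile function $f \colon [0,\infty) \to [0, \eta\tan(\eta\delta)]$ with $f \equiv 0$ near $0$, $f \equiv \eta\tan(\eta\delta)$ on $[\delta', \infty)$ for some $\delta' < \delta$ slightly less than $\delta$ (this is possible since $\eta\delta < \pi/2$ allows a little room to take $\delta'$ with $\eta\delta' < \pi/2$ still and $\eta\tan(\eta\delta')$ close to $\eta\tan(\eta\delta)$—actually cleaner: fix the target value $c = \eta\tan(\eta\delta)$ and arrange $f' \leq \eta^2 + f^2$ throughout), and set $p \coloneqq f \circ r$. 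Since $f$ is locally constant near $r = 0$ and for $r$ large, the composition $f\circ r$ is smooth near $\partial_- V$ and near $\partial_+ V$ even though $r$ itself is only Lipschitz; on the region where $f$ is genuinely varying one must still smooth $r$, which is the technical point addressed below.

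The core computation is the choice of $f$. Consider the comparison ODE $\phi' = \eta^2 + \phi^2$ with $\phi(0^+) \to 0$; its solution is $\phi(t) = \eta \tan(\eta t)$, which is finite and smooth precisely on $[0, \pi/(2\eta))$, and since $\eta\delta < \pi/2$ we have $\phi(\delta) = \eta\tan(\eta\delta) = c$ finite. So I would pick a smooth nondecreasing $f$ with $f(t) = 0$ for $t$ near $0$, $f(t) = c$ for $t \geq \delta$, $0 \leq f \leq c$, and, crucially, $0 \leq f'(t) \leq \eta^2 + f(t)^2$ for all $t$: such an $f$ exists because $\eta\tan(\eta t)$ itself satisfies $\frac{d}{dt}[\eta\tan(\eta t)] = \eta^2 + \eta^2\tan^2(\eta t)$ with equality, so one can take $f$ to be a smoothed-out, slightly-flattened version of $t \mapsto \eta\tan(\eta t)$ that is pushed to be constant $=0$ on $[0,\epsilon]$ and constant $=c$ on $[\delta - \epsilon, \infty)$ while keeping the derivative below $\eta^2 + f^2$ (the derivative can only be made smaller by flattening, and near the corners one loses only $O(\epsilon)$ in the available slope, which is harmless since we have the strict inequality $\eta\delta < \pi/2$ to absorb it). Then wherever $p = f\circ r$ is differentiable, $|\D p| = f'(r)\,|\D r| \leq f'(r) \leq \eta^2 + f(r)^2 = \eta^2 + p^2$, which is exactly the desired inequality.

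The main obstacle is regularity: $r = \dist_g(\cdot,\partial_-V)$ is not smooth, so $p = f\circ r$ is a priori only Lipschitz on the transition region. I would handle this in the standard way by replacing $r$ with a smooth approximation. Concretely, on the compact set $\{ \epsilon \leq r \leq \delta \}$ (a neighborhood of which lies in the interior of $V$ and is disjoint from $\partial_\pm V$) one can find a smooth function $\tilde r$ with $|\tilde r - r| < \epsilon/2$ and $|\D \tilde r| \leq 1 + \epsilon$; then define $p \coloneqq \tilde f \circ \tilde r$ where $\tilde f$ is chosen as above but with slightly shifted flat pieces and with the sharper bound $f'(t) \leq (\eta^2 + f(t)^2)/(1+\epsilon)^2$, which is still achievable since shrinking the allowed slope by a factor $(1+\epsilon)^{-2}$ and translating by $\epsilon$ costs only $O(\epsilon)$ and we have strict room $\eta\delta < \pi/2$. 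This makes $p$ smooth on all of $V$, equal to $0$ near $\partial_- V$, equal to $c = \eta\tan(\eta\delta)$ near $\partial_+ V$, valued in $[0, c]$, and satisfying $|\D p| = f'(\tilde r)|\D\tilde r| \leq f'(\tilde r)(1+\epsilon) \leq \eta^2 + p^2$, completing the proof. An alternative that avoids smoothing $r$ is to note that $f\circ r$, being a monotone smooth function of a Lipschitz function satisfying the inequality in the barrier (viscosity/distributional) sense, can be directly mollified while preserving the inequality up to $O(\epsilon)$; either route works, and I would present whichever is shorter given the conventions already in use in the paper.
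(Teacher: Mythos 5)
Your overall strategy is the same as the paper's: compose the profile $t \mapsto \eta\tan(\eta t)$, which saturates $\eta^2+p^2-p'=0$, with a (smoothed) $1$-Lipschitz function measuring distance from $\partial_- V$; the paper does exactly this, quoting an earlier lemma to produce a smooth $1$-Lipschitz function $x$ with $x\equiv-\varepsilon$ on $\partial_- V$ and $x\equiv\delta+\varepsilon$ on $\partial_+ V$, and then composing with a flattened profile defined on $[-\varepsilon,\delta+\varepsilon]$. However, your construction of the profile contains a genuine error. You require $f\equiv 0$ on $[0,\varepsilon]$, $f\equiv c\coloneqq\eta\tan(\eta\delta)$ on $[\delta-\varepsilon,\infty)$, and $f'\le\eta^2+f^2$; no such $f$ exists. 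By ODE comparison, any function with $f(\varepsilon)=0$ and $f'\le\eta^2+f^2$ satisfies $f(t)\le\eta\tan\bigl(\eta(t-\varepsilon)\bigr)$, hence $f(\delta-\varepsilon)\le\eta\tan\bigl(\eta(\delta-2\varepsilon)\bigr)<c$. Since the tangent profile satisfies the differential inequality with \emph{equality}, there is no pointwise slack at all: flattening near the endpoints while keeping the same endpoint values over an interval of length at most $\delta$ forces the inequality to fail somewhere in between. The repeated claim that the strict inequality $\eta\delta<\pi/2$ \enquote{absorbs} the $O(\varepsilon)$ loss is the source of the confusion: that hypothesis only guarantees that $c$ is finite and gives no room in the comparison argument. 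The genuine slack comes from the hypothesis $\dist_g(\partial_- V,\partial_+ V)>\delta$, which means the profile may be spread over an interval of length strictly greater than $\delta$ — this is precisely what the paper exploits by letting $x$ run from $-\varepsilon$ to $\delta+\varepsilon$.

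The error is repairable inside your own setup: since $r=\dist_g(\cdot,\partial_- V)$ satisfies $r\ge\dist_g(\partial_- V,\partial_+ V)>\delta$ on $\partial_+ V$, choose $\delta''$ with $\delta<\delta''<\dist_g(\partial_- V,\partial_+ V)$ and place the upper flat piece of $f$ on $[\delta'',\infty)$ instead of $[\delta-\varepsilon,\infty)$, e.g.\ $f=\eta\tan(\eta\,\sigma)$ with $\sigma$ smooth, nondecreasing, $\sigma'\le(1+\varepsilon)^{-1}$, $\sigma=0$ near $0$ and $\sigma=\delta$ on $[\delta''-\varepsilon,\infty)$; this is possible for $\varepsilon$ small because the climb now has room $\delta''>\delta$, and it simultaneously compensates the loss $|\D\tilde r|\le 1+\varepsilon$ from smoothing $r$. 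Since $r>\delta''$ on a neighborhood of $\partial_+ V$ (compactness and continuity), $p=f\circ\tilde r$ is still identically $c$ near $\partial_+ V$, and the rest of your computation $|\D p|\le f'(\tilde r)(1+\varepsilon)\le\eta^2+p^2$ goes through. With that correction your argument is essentially the paper's proof, differing only in that you smooth the distance function by hand where the paper cites a ready-made Lipschitz-function lemma.
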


\begin{proof}
    As \(\dist_g(\partial_- V, \partial_+ V) > \delta\), there exists a smooth \(1\)-Lipschitz function \(x \colon V \to \R\) such that \(x|_{\partial_- V} = -\varepsilon\) and \(x|_{\partial_+ V} = \delta + \varepsilon\), for some \(\varepsilon > 0\), see e.g.~\cite[Lemma~7.2]{Cecchini-Zeidler:ScalarMean}.
    Consider the smooth function \(\tilde p(t)\coloneqq\eta\tan\left(\eta t\right)\), for $t\in[0,\delta]$.
    Note that $\tilde p(0)= 0$, $\tilde p(\delta)=\eta\tan\left(\eta\delta\right)$, and $\eta^2+\tilde p^2(t)-\tilde p^\prime(t)=0$.
    By slightly modifying $\tilde p$, we obtain a smooth function $p_\varepsilon\colon \left[-\varepsilon,\delta+\varepsilon]\to[0,\eta\tan\left(\eta\delta\right)\right]$ such that $p_\varepsilon=0$ in a neighborhood of~$-\varepsilon$, $p_\varepsilon=\eta\tan\left(\eta\delta\right)$ in a neighborhood of $\delta+\varepsilon$, and $\eta^2+p_\varepsilon^2-p_\varepsilon^\prime\geq 0$.
    Finally, by setting $p\coloneqq p_\varepsilon\circ x$, we obtain a smooth function on $V$ with the desired properties.
\end{proof}

\begin{lem}\label{lem:escape_to_infinity}
    Let \(V\) be a compact manifold with boundary such that \(\partial V = \partial_- V \sqcup \partial_+ V\), where \(\partial_\pm V\) are non-empty unions of components.
    Let \(\lambda > 0\) be arbitrary and suppose that \(\dist_g(\partial_- V, \partial_+ V) >\delta\), for some $\delta\in\left(0,\frac{1}{\lambda}\right)$.
    Then there exists a smooth function \(h \colon V \to [\lambda, \infty)\) such that \(h = \lambda\) in a neighborhood of \(\partial_- V\), \(h = \frac{\lambda}{1-\delta\lambda}\) in a neighborhood of \(\partial_+ V\), and \( h^2-|\D h|\geq 0\).
\end{lem}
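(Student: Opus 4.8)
The plan is to follow the proof of \cref{lem:tangent_growth} closely, replacing the tangent profile associated with the ordinary differential equation \(\eta^2 + p^2 - p' = 0\) by the profile associated with \(h^2 - h' = 0\), whose solutions are of the form \(t \mapsto (C-t)^{-1}\). Concretely, I would first pick \(\varepsilon > 0\) so small that \(\dist_g(\partial_- V, \partial_+ V) > \delta + 2\varepsilon\), and invoke \cite[Lemma~7.2]{Cecchini-Zeidler:ScalarMean} to obtain a smooth \(1\)-Lipschitz function \(x \colon V \to \R\) with \(x|_{\partial_- V} = -\varepsilon\) and \(x|_{\partial_+ V} = \delta + \varepsilon\). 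Since \(\delta < 1/\lambda\), the function \(u(t) \coloneqq \frac{\lambda}{1 - \lambda t}\) is smooth, positive and strictly increasing on \((-\infty, 1/\lambda)\), in particular on \([0,\delta]\), where \(u(0) = \lambda\), \(u(\delta) = \frac{\lambda}{1-\delta\lambda}\) and \(u' = u^2\).

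The key step is to perform the \emph{slight modification} near the endpoints by a reparametrisation rather than by hand. I would choose a smooth non-decreasing surjection \(\phi \colon \R \to [0,\delta]\) with \(\phi \equiv 0\) on \((-\infty, -\varepsilon/2]\), with \(\phi \equiv \delta\) on \([\delta + \varepsilon/2, \infty)\), and with \(0 \leq \phi' \leq 1\) everywhere; this is possible because \(\phi\) only needs to increase by \(\delta\) over the interval \([-\varepsilon/2, \delta+\varepsilon/2]\), which has length \(\delta + \varepsilon > \delta\). Setting \(\tilde h \coloneqq u \circ \phi\), I obtain a smooth non-decreasing function \(\tilde h \colon \R \to \left[\lambda, \tfrac{\lambda}{1-\delta\lambda}\right]\) which equals \(\lambda\) on \((-\infty, -\varepsilon/2]\), equals \(\tfrac{\lambda}{1-\delta\lambda}\) on \([\delta+\varepsilon/2, \infty)\), and satisfies
\[
    \tilde h' \;=\; (u' \circ \phi)\,\phi' \;=\; (u^2 \circ \phi)\,\phi' \;\leq\; u^2 \circ \phi \;=\; \tilde h^2,
\]
where the inequality uses \(0 \leq \phi' \leq 1\) and \(u > 0\).

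Finally I would set \(h \coloneqq \tilde h \circ x \colon V \to \left[\lambda, \tfrac{\lambda}{1-\delta\lambda}\right] \subseteq [\lambda, \infty)\), which is smooth as a composition of smooth maps. Since \(x\) is continuous with \(x|_{\partial_- V} = -\varepsilon < -\varepsilon/2\) and \(x|_{\partial_+ V} = \delta + \varepsilon > \delta + \varepsilon/2\), we have \(h \equiv \lambda\) on a neighborhood of \(\partial_- V\) and \(h \equiv \tfrac{\lambda}{1-\delta\lambda}\) on a neighborhood of \(\partial_+ V\). Moreover \(\D h = (\tilde h' \circ x)\,\D x\), so from \(\tilde h' \geq 0\) and \(|\D x| \leq 1\) we get \(|\D h| \leq \tilde h' \circ x \leq \tilde h^2 \circ x = h^2\), i.e.\ \(h^2 - |\D h| \geq 0\). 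I do not expect a genuine obstacle here; the only point requiring care is the reparametrisation in the second step, which is precisely what lets one perturb the exact ODE profile near the endpoints without destroying the differential inequality — the same care is already implicit in the proof of \cref{lem:tangent_growth}.
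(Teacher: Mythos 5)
Your proposal is correct and follows essentially the same route as the paper: the same $1$-Lipschitz function $x$ from \cite[Lemma~7.2]{Cecchini-Zeidler:ScalarMean} composed with the ODE profile $t \mapsto \frac{\lambda}{1-\lambda t}$ solving $h' = h^2$. The only difference is cosmetic: where the paper says the profile is ``slightly modified'' near the endpoints, you make this explicit via a reparametrisation $\phi$ with $0 \leq \phi' \leq 1$, which correctly preserves the inequality $\tilde h' \leq \tilde h^2$.
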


\begin{proof}
    As \(\dist_g(\partial_- V, \partial_+ V) > \delta\), in the same fashion as in the proof of \cref{lem:tangent_growth}, pick a smooth \(1\)-Lipschitz function \(x \colon V \to \R\) such that \(x|_{\partial_- V} = -\varepsilon\) and \(x|_{\partial_+ V} = \delta + \varepsilon\), for some \(\varepsilon > 0\).
    Consider the smooth function $\tilde h(t)\coloneqq\frac{\lambda}{1-\lambda t}$, for $t\in\left[0,{1}/{\lambda}\right)$.
    Note that $\tilde h(0)= \lambda$, \(\tilde h(\delta)=\frac{\lambda}{1-\lambda\delta}\), and \((\tilde{h} \circ x)^2 - |\dd (\tilde{h} \circ x)| = \frac{\lambda^2}{(1-\lambda x)^2}(1- |\dd x|) \geq 0\).
    By slightly modifying $\tilde h$ in the same fashion as in the proof of \cref{lem:tangent_growth} and composing with $x$, we obtain a smooth function $h$ on $V$ with the desired properties.
\end{proof}

\begin{proof}[Proof of \cref{thm:C}]
By continuity, we can find \(d' < d\) and \(l' < l\) such that \(d' < \frac{\pi}{\sqrt{\kappa} n}\), \(l' < \frac{1}{\lambda(d')}\), and \(\mean_g > -\Psi(d', l') > -\infty\).
Since \(\dist_g(\partial X_0, \partial X_1)>d'\), by \cref{lem:tangent_growth} there exists a smooth function \(p \colon X_1\setminus X_0\to \left[0,\lambda(d')\right]\) such that \(p = 0\) in a neighborhood of \(\partial X_0\), \(p = \lambda(d')\) in a neighborhood of \(\partial X_1\) and
\begin{equation}\label{eq:MeanScalar2}
    \frac{\kappa n^2}{4}+p^2-|\dd p|\geq0.
\end{equation}
Since \(\dist_g(\partial X_1, \partial X)>l'\), by \cref{lem:escape_to_infinity} there exists a smooth function $h\colon X\setminus X_1\to[\lambda(d'),\infty)$ such that \(h = \lambda(d')\) in a neighborhood of \(\partial X_1\), \(h|_{\partial X}\geq \frac{\lambda(d')}{1-l'\lambda(d')}=\frac{n}{2}\Psi(d',l')\) and
\begin{equation}\label{eq:MeanScalar3}
    h^2-|\D h|\geq 0.
\end{equation}
Finally, let $\psi\in\Ct^\infty_\cc(X,\R)$ be defined by setting $\psi|_{X_0}=0$, $\psi|_{X_1\setminus X_0}=p$, and $\psi|_{X\setminus X_1}=h$. 
Since $\psi=0$ in $X_0$ and using~\eqref{eq:MeanScalar2} and \eqref{eq:MeanScalar3}, we have $\bar\theta_\psi\geq 0$.
Since \(h|_{\partial X}\geq \frac{n}{2}\Psi(d',l')\), $\bar\eta_\psi>0$.
Therefore, \cref{thm:AbstractPositivity} yields the conclusion.
\end{proof}

\noindent Let us now prove \cref{thm:AbstractPositivity}.
The next lemma is needed to estimate the mass.

\begin{lem}\label{lem:ScalPositiveIsomorphism}
Let $(X,g)$ be an asymptotically Euclidean spin manifold with compact boundary and let $\psi \in \Ct^\infty_\cpt(X,\R)$.
Suppose that $\bar\theta_\psi\geq 0$ and $\bar\eta_\psi\geq 0$.
Then the operator $\Callias_\psi\colon \SobolevH^1_{-q}(X,S;\chi)\to \Lp^2(X,S)$ is an isomorphism.
\end{lem}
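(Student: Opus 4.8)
The plan is to reduce the statement to the vanishing of $\ker(\Callias_\psi)$ using the Fredholm theory already in place, and then to kill the kernel with the Penrose mass estimate together with the unique continuation principle. By \cref{prop:CalliasProperties}, the operator $\Callias_\psi\colon\SobolevH^1_{-q}(X,S;\chi)\to\Lp^2(X,S)$ is a bounded Fredholm operator of non-negative index, so it is an isomorphism as soon as $\ker(\Callias_\psi)=0$; if $X$ has several connected components one simply argues component by component, each AE component carrying at least one end. So fix $u\in\SobolevH^1_{-q}(X,S;\chi)$ with $\Callias_\psi u=0$ and aim to show $u\equiv 0$.

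The conceptual point—and really the only thing that makes this lemma independent of any sign assumption on the mass—is that such a $u$ is \emph{asymptotically constant with vanishing norm at infinity}. Indeed, since $q=(n-2)/2>0$ we have $u|_{\mathcal E}\in\SobolevH^1_{-q}(\mathcal E,S)$ in every AE end $\mathcal E$, so one may take $u_0=0$ and hence $|u|_{\mathcal E_\infty}=0$. Plugging $u$ into the Penrose mass estimate \labelcref{eq:mass_estimate_penrose} of \cref{prop:mass_formulas}, the whole left-hand side vanishes (the mass terms because $|u|_{\mathcal E_\infty}=0$, the Callias term because $\Callias_\psi u=0$), while the right-hand side is a sum of manifestly non-negative contributions under the hypotheses $\bar\theta_\psi\geq 0$ and $\bar\eta_\psi\geq 0$. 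Therefore each term on the right must vanish; in particular $\Penrose u=0$.

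Finally, $u$ now satisfies both $\Callias_\psi u=0$ and $\Penrose u=0$, so the computation from the proof of \cref{lem:unique_continuation} applies: $u$ is parallel for the connection $\widetilde\nabla_\xi=\nabla_\xi-\tfrac{\psi}{n}\clm(\xi^\flat)\RelDiracInv$. Outside the compact support of $\psi$ this connection coincides with $\nabla$, so $|u|$ is locally constant there, hence constant on each (connected) AE end. But $u\in\SobolevH^1_{-q}(X,S)$ forces $\int_X|u|^2\rho^{-2}\,\dV<\infty$ (using $2q-n=-2$), and on an AE end a nonzero constant value $|u|\equiv c$ would contribute on the order of $c^2\int_d^\infty r^{n-3}\,\dd r=\infty$ for $n\geq 3$. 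Hence $u$ vanishes identically on the AE ends, thus on a non-empty open set, and \cref{lem:unique_continuation} then forces $u\equiv 0$ on all of $X$. This gives $\ker(\Callias_\psi)=0$, and combined with \cref{prop:CalliasProperties} the lemma follows. I do not expect a genuine obstacle here: the argument is an assembly of the preceding results, and the only mildly delicate step is the last one, where the weighted $\Lp^2$-integrability is used to rule out a nonzero $\widetilde\nabla$-parallel solution.
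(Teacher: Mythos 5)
Your proof is correct and takes essentially the same route as the paper's: reduce to injectivity via \cref{prop:CalliasProperties}, apply the Penrose estimate \labelcref{eq:mass_estimate_penrose} to a kernel element (whose norm at infinity vanishes since it lies in \(\SobolevH^1_{-q}\)) to get \(\Penrose u = 0\) and hence \(\nabla u = 0\) outside \(\supp\psi\), and then conclude vanishing near infinity and invoke \cref{lem:unique_continuation}. Your weighted-\(\Lp^2\) integrability computation simply makes explicit the paper's phrase \enquote{since \(u\) decays at infinity}, so the two arguments coincide in substance.
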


\begin{proof}
Let $u\in \SobolevH^1_{-q}(X,S;\chi)$ be a section in the kernel of $\Callias_\psi$.
Since $\theta_\psi\geq 0$ and $\eta_\psi\geq 0$, using \labelcref{eq:mass_estimate_penrose} yields 
\[
    0=\|\Callias_\psi(u)\|^2_{\Lp^2(X)} \geq \|\Penrose u\|^2_{\Lp^2(X)} + \int_X\theta_\psi|u|^2\dV+\int_{\partial X}\eta_\psi|u|_{\partial X}|^2\dS\geq 0.
\]
Thus \(\Penrose u = 0\), or in other words, \(\nabla_\xi u = -\frac{1}{n} \clm(\xi^\flat) \Dirac u = \frac{\psi}{n} \clm(\xi^\flat) \RelDiracInv u\) for every vector field \(\xi\).
Since \(\psi\) is compactly supported, this implies that \( \nabla u = 0\) near infinity in each end.
But since \(u\) decays at infinity, this implies that \(u = 0\) near infinity in each end.
But then \cref{lem:unique_continuation} implies that \(u\) vanishes everywhere.
Hence the kernel of $\Callias_\psi$ is trivial and \cref{prop:CalliasProperties} implies the conclusion.
\end{proof}

\noindent We are now ready to prove \cref{thm:AbstractPositivity}.

\begin{proof}[Proof of \cref{thm:AbstractPositivity}]
Fix any asymptotically Euclidean end $\mathcal E$ of $X$ and let $\xi_0\in \Ct^\infty(X,S)$ be an asymptotically constant section which is non-zero at infinity in \(\mathcal{E}\) but vanishes on all other ends.
Note that $\Callias_\psi(\xi_0)$ is a smooth section in $\Lp^2(X,S)$.
By \cref{lem:ScalPositiveIsomorphism}, there exists a section $\xi\in \SobolevH^1_{-q}(X,S)$ such that $\Callias_\psi(\xi)=\Callias_\psi(\xi_0)$.
Define $u\coloneqq\xi-\xi_0$.
Then \(\Callias_\psi u = 0\) and hence \(u\) is smooth by elliptic regularity.
Moreover, using \labelcref{eq:mass_estimate_penrose} from \cref{prop:mass_formulas}, shows that
\begin{equation}\label{eq:abstract_positivity_estimate}
    \frac{n}{2} \omega_{n-1} \mass(\mathcal{E},g) |u|_{\mathcal{E}_\infty}^2
    \geq  \frac{n}{n-1}\|\Penrose u\|_{\Lp^2(X)}^2  + \int_X \bar\theta_\psi |u|^2\dV+\int_{\partial X}\bar\eta_\psi|u|^2\dS 
\end{equation}
Thus \(\mass(\mathcal{E},g) \geq 0\) because \(\bar{\theta}_\psi \geq 0\) and \(\bar{\eta}_\psi \geq 0\).
Finally, if \(\mass(\mathcal{E},g) = 0\), then \labelcref{eq:abstract_positivity_estimate} implies furthermore that \(\Penrose u = 0\).
Since $\xi_0$ is non-zero at infinity in \(\mathcal{E}\), we have $u\neq 0$ and so \cref{lem:unique_continuation} shows that $u(x)\neq 0$ for every $x\in X$.
But then \labelcref{eq:abstract_positivity_estimate} implies that \(\bar{\theta}_\psi = 0\) and \(\bar{\eta}_\psi = 0\) everywhere.
\end{proof}
\section{The positive mass theorem with arbitrary ends}\label{sec:PMTAE}
This section is devoted to proving \cref{thm:A,thm:B}.
Let $(X,g)$ be an asymptotically Euclidean manifold with compact boundary.
For \(\psi \in \Cc^\infty(X,\R)\), let $\theta_\psi$ and $\eta_\psi$ be the functions defined by~\eqref{eq:theta}.
In the next theorem, we estimate sections in $\SobolevH^1_{-q}(X,S;\chi)$ in the case that $\theta_\psi$ is possibly negative in a suitable region.

\begin{thm}\label{thm:isomorphism}
    Let \((X,g)\) be a complete connected asymptotically Euclidean spin manifold with compact boundary and let \(\psi \in \Cc^\infty(X,\R)\) be such that $\eta_\psi\geq 0$.
    Write \(\theta_\psi = \theta_+ - \theta_-\) with \(\theta_\pm \geq 0\).
    Suppose that \(\supp(\theta_-) \subseteq X_0\) for a connected codimension zero submanifold \(X_0 \subseteq X\) with compact boundary which contains at least one asymptotically Euclidean end of \(X\).
    Then there exists a constant \(c_0 = c_0(X_0,g) > 0\), depending only on \((X_0, g)\), such that if 
    \[
    \| \theta_- \|_{\Lp^\infty_{-2}(X_0)} \leq c_0,
    \]
    then 
    \[
        \Callias_\psi\colon \SobolevH^1_{-q}(X,S; \chi)\to \Lp^2(X,S)
    \]
    is an isomorphism and
    \begin{equation}
        \|\Callias_\psi u\|_{\Lp^2(X)}^2 \geq c_0 \|u\|_{\Lp^2_{-q}(X_0)}^2\label{eq:partial_injectivity_estimate}
    \end{equation}
    for all \(u \in \SobolevH^1_{-q}(X;S;\chi)\).
\end{thm}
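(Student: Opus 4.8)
The goal is to show that, given the smallness assumption on $\theta_-$, the Callias operator $\Callias_\psi$ is an isomorphism from $\SobolevH^1_{-q}(X,S;\chi)$ to $\Lp^2(X,S)$ together with the quantitative estimate \eqref{eq:partial_injectivity_estimate}. The plan is to first establish \eqref{eq:partial_injectivity_estimate} as an a priori estimate, then deduce that $\Callias_\psi$ has trivial kernel, and finally invoke \cref{prop:CalliasProperties} to upgrade triviality of the kernel to the isomorphism property.

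\textit{Step 1: A weighted Poincaré inequality localized to $X_0$.} Apply \cref{prop:weighted_poincare} to $X_0$ (viewed as a complete AE manifold with compact boundary in its own right, after attaching the ends contained in it) with the weight $\delta = -q$: there is a constant $C_0 = C_0(X_0,g) > 0$ such that $\|v\|_{\Lp^2_{-q}(X_0)} \leq C_0 \|\nabla v\|_{\Lp^2_{-q-1}(X_0)}$ for all $v \in \SobolevH^1_{-q}(X_0,S)$. This is where the constant $c_0 = c_0(X_0,g)$ will ultimately come from; concretely I expect $c_0$ to be something like a fixed fraction of $C_0^{-2}$.

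\textit{Step 2: The main a priori estimate.} Let $u \in \SobolevH^1_{-q}(X,S;\chi)$. Apply the mass estimate \eqref{eq:mass_estimate} from \cref{prop:mass_formulas}. Since $\psi$ is compactly supported, $u$ is asymptotically constant (with vanishing norm at infinity, as it decays), so the boundary-at-infinity terms drop out and the mass terms vanish; together with $\eta_\psi \geq 0$ on $\partial X$ this gives
\[
    \|\Callias_\psi u\|_{\Lp^2(X)}^2 \geq \|\nabla u\|_{\Lp^2(X)}^2 + \int_X \theta_\psi |u|^2 \dV \geq \|\nabla u\|_{\Lp^2(X)}^2 - \int_{X_0} \theta_- |u|^2 \dV,
\]
using $\theta_\psi = \theta_+ - \theta_- \geq -\theta_-$ and $\supp(\theta_-) \subseteq X_0$. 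Now estimate $\int_{X_0} \theta_- |u|^2 \dV \leq \|\theta_-\|_{\Lp^\infty_{-2}(X_0)} \|u\|_{\Lp^2_{-q}(X_0)}^2$ (this is exactly the Hölder-type pairing matching the weights: $\theta_- \in \Lp^\infty_{-2}$ against $|u|^2 \in \Lp^1_{-2q} = \Lp^1_{-(n-2)}$, and $\|u\|_{\Lp^2_{-q}}^2 = \int |u|^2 \rho^{2q - n} = \int |u|^2 \rho^{-2}$ when $2q = n-2$... one must check the weight bookkeeping carefully here). Applying Step 1 to control $\|u\|_{\Lp^2_{-q}(X_0)}$ by $\|\nabla u\|_{\Lp^2_{-q-1}(X_0)} \leq \|\nabla u\|_{\Lp^2(X)}$, and combining, yields
\[
    \|\Callias_\psi u\|_{\Lp^2(X)}^2 \geq (1 - C_0^2 \|\theta_-\|_{\Lp^\infty_{-2}(X_0)}) \|\nabla u\|_{\Lp^2(X)}^2 \geq (1 - C_0^2 \|\theta_-\|_{\Lp^\infty_{-2}(X_0)}) \cdot \frac{1}{C_0^2} \|u\|_{\Lp^2_{-q}(X_0)}^2,
\]
so choosing $c_0 \coloneqq \frac{1}{2C_0^2}$ and assuming $\|\theta_-\|_{\Lp^\infty_{-2}(X_0)} \leq c_0$ (so that $1 - C_0^2\|\theta_-\|_{\Lp^\infty_{-2}(X_0)} \geq \frac12$) gives \eqref{eq:partial_injectivity_estimate} after a harmless further shrinking of the constant; in fact the same computation with $c_0$ chosen small enough gives $\|\Callias_\psi u\|^2 \geq c_0\|u\|_{\Lp^2_{-q}(X_0)}^2$ directly.

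\textit{Step 3: Trivial kernel and conclusion.} If $\Callias_\psi u = 0$, then \eqref{eq:partial_injectivity_estimate} forces $\|u\|_{\Lp^2_{-q}(X_0)} = 0$, so $u$ vanishes on $X_0$, which is a nonempty open set. Moreover, from the displayed inequality in Step 2 with left side zero we also get $\nabla u = 0$, hence $u$ is parallel; more to the point, $\Callias_\psi u = 0$ together with $u \equiv 0$ on the open set $X_0$ and connectedness of $X$ let us apply \cref{lem:unique_continuation} — or rather its underlying mechanism: since $\theta_\psi \geq 0$ outside $X_0$ one sees $\Penrose u = 0$ as well (the mass estimate \eqref{eq:mass_estimate_penrose} gives this once $\nabla u = 0$, or directly), so \cref{lem:unique_continuation} applies and yields $u \equiv 0$ on all of $X$. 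Thus $\ker(\Callias_\psi) = 0$, and since \cref{prop:CalliasProperties} tells us $\Callias_\psi$ is Fredholm of non-negative index, trivial kernel implies it is an isomorphism.

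\textit{Main obstacle.} The delicate point is Step 2: getting the weighted Hölder estimate $\int_{X_0}\theta_-|u|^2 \leq \|\theta_-\|_{\Lp^\infty_{-2}(X_0)}\|u\|_{\Lp^2_{-q}(X_0)}^2$ with exactly the right weights, and making sure the localized weighted Poincaré inequality on $X_0$ is legitimate — $X_0$ has the ends it contains plus a new compact boundary component $\partial X_0$, so one should either cite \cref{prop:weighted_poincare} for that manifold directly or reduce to $X$ via a cutoff, being careful that the constant depends only on $(X_0,g)$ and not on the ambient $X$. Everything else is bookkeeping of weights and a standard Fredholm argument.
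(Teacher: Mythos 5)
Your proposal is correct and follows essentially the same route as the paper: the weighted Poincaré inequality on $(X_0,g)$ with weight $-q$ (yielding $c_0$ of the form $1/(2C)$), the mass formula \labelcref{eq:mass_estimate} with the mass terms vanishing since $u \in \SobolevH^1_{-q}$, the weighted Hölder pairing $\int_{X_0}\theta_-|u|^2 \leq \|\theta_-\|_{\Lp^\infty_{-2}}\|u\|_{\Lp^2_{-q}(X_0)}^2$ (your weight bookkeeping is right, and $\Lp^2_{-q-1}=\Lp^2$ since $2q=n-2$), and then trivial kernel plus \cref{prop:CalliasProperties}. The only cosmetic difference is in the kernel step, where the paper concludes directly from $\nabla u=0$ and $u|_{X_0}=0$ that a parallel section vanishing on an open set is zero, whereas you detour through \cref{lem:unique_continuation}; both are valid.
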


\begin{proof}
    We use \cref{prop:weighted_poincare} to find a constant \(C = C(X_0, g, -q)> 0\) such that \(\| v\|_{\Lp^2_{-q}(X_0)}^2 \leq C \|\nabla v\|_{\Lp^2(X_0)}^2\) for all \(v \in \SobolevH^1_{-q}(X_0,S)\).
    We then set \(c_0 \coloneqq 1/(2C) > 0\).
    Then, using \labelcref{eq:mass_estimate}, we arrive at the following estimate for all \(u \in \SobolevH^1_{-q}(X,S;\chi)\).
    \begin{align}
        \|\Callias_\psi u\|_{\Lp^2(X)}^2 &\geq \int_X |\nabla u|^2 + \theta_\psi |u|^2 \dvol + \int_{\partial X} \underbrace{\left(\frac{n-1}{2}\mean_g + \psi\right)}_{= \eta_\psi \geq 0}|u|^2 \dS \notag \\ 
        &\geq \|\nabla u\|^2_{\Lp^2(X)} - \|\rho^2 \theta_-\|_{\Lp^\infty} \cdot \int_{X_0} \rho^{-2} |u|^2 \dV \notag \\
        &=\|\nabla u\|^2_{\Lp^2(X)} - \|\theta_-\|_{\Lp^\infty_{-2}} \cdot \|u\|_{\Lp^2_{-q}(X_0)}^2 \label{eq:converse_connection_estimate}\\
        &\geq\|\nabla u\|^2_{\Lp^2(X_0)} - \|\theta_-\|_{\Lp^\infty_{-2}} \cdot \|u\|_{\Lp^2_{-q}(X_0)}^2 \notag \\
        &\geq \left(\frac{1}{C} -  c_0\right) \|u\|_{\Lp^2_{-q}(X_0)}^2 = c_0 \|u\|_{\Lp^2_{-q}(X_0)}^2. \notag
    \end{align}
    This already proves \labelcref{eq:partial_injectivity_estimate}.
    Now to see that \(\Callias_\psi\) is an isomorphism it is enough to show that \(\Callias_\psi\) has trivial kernel by \cref{prop:CalliasProperties}.
    Indeed, for \(u \in \ker(\Callias_\psi)\), each line in the above estimate must vanish.
    In particular, \(\|u\|_{\Lp^2_{-q}(X_0)} = 0\) and vanishing of \labelcref{eq:converse_connection_estimate} subsequently shows that \(\|\nabla u\|_{\Lp^2(X)}^2 = 0\). 
    Hence \(u\) is parallel on all of \(X\). Since \(u\) vanishes on \(X_0\) this proves that \(u = 0\) everywhere.
    Thus \(\ker(\Callias_\psi) = 0\).
\end{proof}

\noindent We are now ready to prove \cref{thm:A,thm:B}.

\begin{proof}[Proof of \cref{thm:A}]
Let $X_0\subseteq \mathcal{E}$ be the complement of some open Euclidean ball in \(\mathcal{E}\).
We furthermore assume that some neighborhood of \(X_0\) in \(M\) is still complete, spin and has non-negative scalar curvature (otherwise there is nothing to prove).
Let $K \subset X_0$ be the closure of a collar neighborhood of $\partial X_0$ inside \(X_0\).
Furthermore, fix a smooth function $f\colon X_0\to [0,1]$ such that $f=0$ in $X_0\setminus K$ and $f=1$ in a small neighborhood of \(\partial X_0\).
For later use we furthermore fix an arbitrary section \(\xi_0 \in \Ct^\infty(X_0,S)\) which is asymptotically constant in \(\mathcal{E}\) with \(|\xi_0|_{\mathcal{E}_\infty} \neq 0\) and such that \(\supp(\xi_0) \subseteq X_0 \setminus K\).

Next we let \(\lambda_0 \in [0,\infty)\) be the infimum of all positive real numbers \(\lambda > 0\) such that there exists a complete codimension zero spin submanifold $X_\lambda\subset M$ with compact (possibly empty) boundary such that $X_0\subset X_\lambda$, $X_\lambda\setminus X_0$ is relatively compact in $M$, \(\scal_g \geq 0\) on \(X_\lambda\), and $\dist_g(\partial X,\partial X_\lambda)>\frac{1}{\lambda}$ (we take this condition to be trivially satisfied if \(\partial X_{\lambda} = \emptyset\)).
For later use, we fix such a submanifold \(X_\lambda\) for each \(\lambda > \lambda_0\).
Heuristically, the constant \(\lambda_0\) is the smallest non-negative number such that all the conditions \labelcref{item:complete,item:geq_scal,item:spin} from the statement of \cref{thm:A} still hold in a \(\frac{1}{\lambda_0}\)-neighborhood of \(X_0\) in \(M\).
To prove \cref{thm:A}, we must thus demonstrate that \(\lambda_0\) admits a positive lower bound which only depends on the end \((\mathcal{E},g)\).

Using \cref{lem:escape_to_infinity}, for each \(\lambda > \lambda_0\), we can choose a smooth function $h_\lambda\colon X_\lambda\setminus X_0\to[\lambda,\infty)$ such that $h_\lambda=\lambda$ in a small neighborhood of $\partial X_0$ which satisfies
\[ h_\lambda^2-|\D h_\lambda|\geq 0 \] 
in all of $X_\lambda\setminus X_0$, and $h_\lambda|_{\partial X_\lambda} + \frac{(n-1) H_\lambda}{2}\geq 0$, where $H_\lambda$ is the mean curvature of $\partial X_\lambda$ with respect to the inwards pointing unit normal field.
Let $\psi_\lambda\colon X_\lambda\to[0,\infty)$ be the smooth function defined by
\[
    \psi_\lambda(x)\coloneqq\begin{cases}
    \lambda f(x)&x\in X_0\\h_\lambda(x)& x\in X_\lambda\setminus X_0.
    \end{cases}
\]
Let $\Callias_{\psi_\lambda}$ be the associated Callias operator and let $\theta_\lambda = \frac{\scal_g}{4} + \psi^2_\lambda - |\D \psi_{\lambda}|$, $\eta_\lambda = h_\lambda|_{\partial X_\lambda} + \frac{(n-1) H_\lambda}{2}$.
By construction, we thus obtain
\begin{myenumi}
    \item $\theta_\lambda\geq 0$ in $X_\lambda\setminus K$;\label{item:LPMT1}
    \item $\theta_\lambda\geq-\lambda |\D f|$ in $K$;\label{item:LPMT2}
    \item $\eta_\lambda\geq 0$.\label{item:LPMT4}
\end{myenumi}
In particular, the negative part of \(\theta_\lambda\) is supported in \(K \subseteq X_0\) and satisfies \((\theta_\lambda)_- \leq \lambda |\dd f|\) where it is non-zero.
Note also that, by \cref{thm:isomorphism}, there exist constants $\lambda_1 = \lambda_1(X_0,g)>0$ and \(c_0 = c_0(X_0,g)> 0\) depending only on the data in \(X_0\) such that
\begin{equation}
    \forall \lambda \in (\lambda_0, \lambda_1) \quad \forall v \in \SobolevH^1_{-q}(X_\lambda, S; \chi): \qquad \|\Callias_{\psi_{\lambda}} v\|_{\Lp^2(X_\lambda)}^2 \geq c_0 \|v\|_{\Lp^2_{-q}(X_0)}^2, \label{eq:LPMT_partial_injectivity_estimate}
\end{equation}
and the operator $\Callias_{\psi_\lambda}\colon \SobolevH^1_{-q}(X_\lambda,S;\chi)\to \Lp^2(X_\lambda,S)$ is an isomorphism for all $\lambda\in(\lambda_0,\lambda_1)$.
Note that we can assume without loss of generality that \(\lambda_0 < \lambda_1\) because if \(\lambda_0 \geq \lambda_1\), then the desired conclusion of the theorem would already be proved for the constant \(R = \frac{1}{\lambda_1}\).

Next, let $\lambda\in(\lambda_0,\lambda_1)$ be arbitrary.
We now turn to the asymptotically constant section \(\xi_0\) which has been fixed in advance.
Since by construction \(\supp(\xi_0) \cap \supp(\psi_\lambda) = \emptyset\), we have $\Callias_{\psi_\lambda}(\xi_0)=\Dirac(\xi_0)$.
Moreover, \(\Dirac(\xi_0) \in \Lp^2(X_0, S) \subset \Lp^2(X_\lambda, S)\).
Thus there exist $\xi_\lambda\in \SobolevH^1_{-q}(X_\lambda,S; \chi)$ such that 
\begin{equation}\label{eq:LPMT1}
    \Callias_{\psi_\lambda}(\xi_\lambda)= -\Dirac(\xi_0) = -\Callias_{\psi_{\lambda}}(\xi_0) .
\end{equation}
Define $u_\lambda\coloneqq\xi_0+\xi_\lambda$.
Then $\Callias_{\psi_\lambda}(u_\lambda)=0$ everywhere and \(u_\lambda = \xi_{\lambda}\) on \((X_{\lambda} \setminus X_0) \cup K\).
By \labelcref{eq:mass_estimate} and using~\labelcref{item:LPMT1,item:LPMT2,item:LPMT4}, we deduce
\begin{align}
    \frac{n-1}{2} \omega_{n-1}\cdot\mass(\mathcal E,g) |\xi_0|_{\mathcal{E}_\infty}^2 &\geq \|\nabla u_\lambda\|_{\Lp^2(X_\lambda)} + \int_{X_\lambda}\theta_\lambda  |u_\lambda|^2\dV
    +\int_{\partial X_{\lambda}}\eta_\lambda|u_\lambda|^2\dS \notag \\
     &\geq \|\nabla u_\lambda\|_{\Lp^2(X_\lambda)} -\int_{X_{0}}(\theta_\lambda)_{-}|u_\lambda|^2\dV \notag \\
    &\geq  \|\nabla u_\lambda\|_{\Lp^2(X_\lambda)} -\lambda \|\D f\|_{\Lp^\infty_{-2}}\|\xi_\lambda\|_{\Lp^2_{-q}(X_0)}^2 \label{eq:LPMT2}
\end{align}
where in the last inequality we used that $u_\lambda=\xi_\lambda$ on $K$.
By \labelcref{eq:LPMT_partial_injectivity_estimate}, we obtain
\begin{equation}\label{eq:LPMT4}
    \|\Dirac(\xi_0)\|_{\Lp^2(X_0)}^2 = \|\Dirac(\xi_0)\|_{\Lp^2(X_\lambda)}^2= \|\Callias_{\psi_\lambda}(\xi_\lambda)\|_{\Lp^2(X_\lambda)}^2 \geq c_0 \|\xi_\lambda\|_{\Lp^2_{-q}(X_0)}^2.
    \end{equation}
Combining \labelcref{eq:LPMT2,eq:LPMT4}, we deduce that
\begin{equation}
    \frac{n-1}{2} \omega_{n-1} \cdot \mass(\mathcal E,g) |\xi_0|_{\mathcal{E}_\infty}^2\geq \|\nabla u_\lambda\|_{\Lp^2(X_\lambda)}^2 - \lambda \|\D f\|_{\Lp^\infty_{-2}(X_0)} \frac{\|\Dirac\xi_0\|_{\Lp^2(X_0)}^2}{{c_0}}.
    \label{eq:arbitrary_ends_mass_estimate}
\end{equation}
for all \(\lambda\in(\lambda_0, \lambda_1)\).
Then letting $\lambda \searrow \lambda_0$ proves that
\[
    \lambda_0 \cdot \|\dd f\|_{\Lp^\infty_{-2}(X_0)} \frac{\|\Dirac\xi_0\|_{\Lp^2(X_0)}^2}{{c_0}} \geq -\frac{n-1}{2} \omega_{n-1} \cdot \mass(\mathcal E,g) |\xi_0|_{\mathcal{E}_\infty}^2.
\]
Note that both \(\xi_0\) and \(f\) live only on \(X_0 \subset \mathcal{E}\) and have been chosen in advance independently of \(\lambda_0\).
Moreover, if \(\Dirac(\xi_0)\) was zero, then \(\xi_\lambda = 0\) for each \(\lambda \in (\lambda_0, \lambda_1)\) and \labelcref{eq:LPMT2} would imply that \(\mass(\mathcal{E}, g) \geq 0\).
Thus, if \(\mass(\mathcal{E},g) < 0\) as in the hypothesis of \cref{thm:A}, this proves that \(\lambda_0\) admits a strictly positive lower bound which only depends on the end \((\mathcal{E},g)\) and thereby finishes the proof of \cref{thm:A}.
\end{proof}

\begin{proof}[Proof of \cref{thm:B}]
Applying the same construction and notation as in the proof of \cref{thm:A}, we must have \(\lambda_0 = 0\) since \((M,g)\) is complete, spin and has non-negative scalar curvature everywhere.
Then letting \(\lambda \searrow 0 = \lambda_0\) in \labelcref{eq:arbitrary_ends_mass_estimate} proves that \(\mass(\mathcal{E},g) \geq 0\) as desired (alternatively, this is also already a formal consequence of the statement of \cref{thm:A}).

Finally, we turn to the rigidity statement and assume that \(\mass(\mathcal E,g) = 0\).
Then fix an arbitrary codimension zero submanifold \(X \subseteq M\) with boundary such that \(X_0 \subseteq X\) and \(X \setminus X_0\) is relatively compact.
Then for all \(\lambda > 0\) sufficiently close to zero, we have \(X \subset X_\lambda\).
Thus we obtain from \labelcref{eq:arbitrary_ends_mass_estimate} that \(\|\nabla u_\lambda\|_{\Lp^2(X)}^2 \leq \|\nabla u_\lambda\|^2_{\Lp^2(X_\lambda)} \to 0\) as \(\lambda \searrow 0\).
Since \(\xi_\lambda \in \SobolevH^{1}_{-q}\), the weighted Poincaré inequality on \(X\) (see \cref{prop:weighted_poincare}) implies that there exists a constant \(C_X\), which only depends on all the data on \(X\), such that
\[
\|\xi_{\lambda} - \xi_{\lambda'}\|_{\SobolevH^1_{-q}(X)} \leq C_X \|\nabla \xi_{\lambda} - \nabla \xi_{\lambda'}\|_{\Lp^2(X)} = C_X\|\nabla u_{\lambda} - \nabla u_{\lambda'}\|_{\Lp^2(X)} \to 0
\]
as \(\lambda, \lambda' \searrow 0\).
Thus \(\xi \coloneqq \lim_{\lambda \searrow 0} \xi_{\lambda}\) exists in \(\SobolevH^1_{-q}(X,S)\).
Setting \(u \coloneqq \xi_0 + \xi\), we thus obtain a section \(u \in \SobolevH^1_{\loc}(X,S)\) such that \(\nabla u = 0\) and \(u - \xi_0 \in \SobolevH^1_{-q}(X, S)\).
Since \(X\) was arbitrary, this actually shows that we obtain a parallel section \(u\) of \(S\) defined on all of \(M\) which is asymptotic to \(\xi_0\) in the end \(\mathcal{E}\).
This implies that we can find parallel spinors on \(M\) which are asymptotic to arbitrarily chosen constant spinors in the end \(\mathcal{E}\) because in the construction above the asymptotically constant section \(\xi_0\) was arbitrary and \(S = \ReducedSpinBdl \oplus \ReducedSpinBdl\) by definition.
As in the classical spin proof of the rigidity part of the positive mass theorem, this implies the existence of a globally parallel tangent frame on \(M\) (see for instance~\cite[692]{Bartnick:MassAsymptoticallyFlat}, \cite[175]{Lee2019-GeometricRelativity}) and so \((M,g)\) is flat. Then an elementary argument using the Cartan--Hadamard theorem implies that \((M,g)\) is isometric to Euclidean space.
\end{proof}

\printbibliography
\end{document}